\numberwithin{equation}{section}
\newtheorem{theorem}{Theorem}[section]
\newtheorem{definition}[theorem]{Definition}
\newtheorem{proposition}[theorem]{Proposition}
\newtheorem{corollary}[theorem]{Corollary}
\newtheorem{lemma}[theorem]{Lemma}
\newtheorem{remark}[theorem]{Remark}
\newtheorem{example}[theorem]{Example}
\newcommand{\supp}{{\rm supp}}
\newcommand{\psh}{\textnormal{PSH}}
\newcommand{\ddc}{dd^c}
\newcommand{\dc}{d^c}
\newcommand{\PSH}{{\rm PSH}}
\newcommand{\QPSH}{{\rm QPSH}}
\newcommand{\capa}{\mathop{\mathrm{Cap}}\nolimits}
\newcommand{\local}{\rm{loc}}
\newcommand{\B}{\mathbb{B}}
\newcommand{\C}{\mathbb{C}}
\newcommand{\N}{\mathbb{N}}
\title[]{\quad Higher complex Sobolev spaces on complex manifolds}
\author{Thai Duong Do\textit{$^{1,2}$} and Duc-Bao Nguyen\textit{$^{3}$}}
\address{\textit{$^{1}$}Department of Mathematics, National University of Singapore - 10, Lower Kent Ridge Road - Singapore 119076\footnote[4]{On leave from Institute of Mathematics, Vietnam Academy of Science and Technology}}
\curraddr{\textit{$^{2}$}VNU University of Engineering and Technology, 144 Xuan Thuy Street, Cau Giay District, Hanoi, 100000, Viet Nam}
\email{dtduong@vnu.edu.vn, duongdothai.vn@gmail.com}
\address{\textit{$^{3}$}Department of Mathematics,  National University of Singapore - 10, Lower Kent Ridge Road - Singapore 119076}
\email{ducbao.nguyen@u.nus.edu}
\begin{document}


\date{\today}

\begin{abstract}
We study higher complex Sobolev spaces and their corresponding functional capacities. In particular, we prove the Moser-Trudinger inequality for these spaces and discuss some relationships between these spaces and the complex Monge-Amp\`{e}re equation.
  \end{abstract}

\medskip

\maketitle

\noindent {\bf Classification AMS 2020}: 32Uxx, 32W20, 46E35.

\smallskip

\noindent {\bf Keywords:} complex Sobolev space, plurisubharmonic function, Moser-Trudinger inequality, complex Monge-Amp\`{e}re equation, positive closed current.
\tableofcontents
\section{Introduction}
 In this paper, we study higher complex Sobolev spaces 
 which extend the notion of complex Sobolev space introduced earlier by Dinh-Sibony in \cite{DS_decay}. The complex Sobolev space has been systematically studied by Vigny in \cite{Vigny}.
 The key observation is that this space takes into account the complex structure of the ambient space and is stable under holomorphic transformations; thus, one could view it as a tailored version of the classical Sobolev space to the complex setting. As a consequence, this functional space plays a key role in complex dynamics and it leads to many fruitful applications in this field (e.g., see \cite{Bianchi-Dinh-EquiStateforPkII,dinh2021products,DS_decay,deThelin-Vigny,Vigny_expo-decay-birational,Vu_nonkahler_topo_degree}).
 Not limited to the theory of complex dynamics, the complex Sobolev spaces also find applications in other fields of mathematics. For example, in the study of the complex Monge-Amp\`ere equation, this space has been used as a test space to obtain the necessary and sufficient conditions for whether a given measure is a Monge-Amp\`{e}re measure with H\"{o}lder potentials (see \cite{dinh2022complex,DinhMarinescuVu}). Moreover, it has recently found applications in studying geometric estimates for (singular) K\"{a}hler metrics (see \cite{Vu-log-diameter,Vu-diameter,DNV-local-non-collapsing-bigclass}). 

 The classical Sobolev spaces $W^{n,p}$ are important tools in studying differential equations and are usually used as weak solution spaces for many fundamental equations. An important fact is that when $n$ and $p$ go to infinity, the solution gains more regularity and becomes the 
 classical solution. 

We study higher complex Sobolev spaces in both local (i.e., on bounded domains of $\mathbb C^k$) and global (i.e., on compact K\"ahler manifolds) settings. Let $\Omega$ be a bounded domain in $\mathbb{C}^k$, we denote by $\PSH(\Omega)$ the set of plurisubharmonic (psh) functions in $\Omega$. Let $(X,\omega)$ be a compact K\"ahler manifold of dimension $k$, we denote by $\QPSH(X)$ the set of quasi-plurisubharmonic (qpsh) functions on $X$. These functions are locally given as the sum of a smooth and a plurisubharmonic function. We also denote by $\PSH(X,\omega)$ the set of $\omega$-plurisubharmonic ($\omega$-psh) functions on $X$, i.e., the set of qpsh functions $\varphi$ such that $\omega + dd^c \varphi \geq 0$.

\begin{definition}[Higher complex Sobolev spaces]\label{Def higher cplx Sobolev space}
    For $q\geq 1$, we define inductively\break $q$-complex Sobolev spaces $W^{*}_q$ as follows
    \begin{itemize}
        \item[(1)] (local setting) $W^{*}_q(\Omega)$ is the set of all functions $\varphi \in W^{1,2}(\Omega)$ such that $d\varphi\wedge d^c\varphi\leq dd^c\psi$ for some $\psi\in W^{*}_{q-1}(\Omega) \cap \PSH(\Omega)$ ($\psi \in \PSH(\Omega)$ when $q=1$) satisfying $$\int_{\Omega}dd^c \psi \wedge (dd^c |z|^2)^{k-1} < \infty.$$
        \item[(2)] (global setting) $W^{*}_{q}(X)$ is the set of all functions $\varphi \in W^{1,2}(X)$ such that $d\varphi\wedge d^c\varphi\leq c_\varphi\omega+dd^c\psi$ for some constant $c_\varphi\geq0$ and $\psi\in W^{*}_{q-1}(X) \cap \QPSH(X)$ ($\psi \in \QPSH(X)$ when $q=1$).
    \end{itemize}
\end{definition}

In Section~\ref{sec 2}, we will introduce some notions and technical tools to study these functional spaces. In particular,
 using techniques in \cite{DS_decay,Vigny}, we will build a quasinorm $\|\cdot\|_{*,q}$ on $W^*_q$ which makes $W^*_q$ a quasi-Banach space sharing many properties with $W^*_1$. After that, we will prove some basic properties of these spaces and consider some specific examples. Moreover, we also introduce a family of functional capacities and show that all these capacities characterize pluripolar sets, similarly to how the original $W^*_1$-functional capacity does (\cite{Vigny}).

In Section \ref{sec:3}, we will prove the Moser-Trudinger inequalities for these higher complex Sobolev spaces.
 An important point in our results is that the exponent in the Moser-Trudinger inequalities goes to infinity when $q$ goes to infinity. It follows that when $q$ grows, our spaces will gain more regularity and get closer to the bounded functions space.
\vspace{0.2cm}

 \begin{theorem}\label{theoremA}Let $q\geq 1$, $\alpha \in[1,2^q)$ and $K$ be a compact subset of the unit ball $\B$ of $\C^k$. Let $v_1,\ldots,v_k$ be psh functions on $\B$ which are H\"{o}lder continuous of H\"{o}lder exponent $\beta$ for some $\beta \in (0,1)$ with $\|v_j\|_{\mathscr{C}^\beta} \leq 1$ for $1\leq j\leq k$. Let $\varphi \in W^*_q(\B)$ such that $\|\varphi\|_{*,q}\leq 1$. Then there exist strictly positive constants $c_1$ and $c_2$ depending on $K$, $\alpha$, and $\beta$ but independent of $\varphi,v_1,\ldots,v_k$ such that 
    $$\int_K e^{c_1|\varphi|^\alpha}dd^c v_1 \wedge \cdots \wedge dd^c v_k\leq c_2.$$
In particular, 
there exist strictly positive constants $c_1$ and $c_2$ depending on $K$ and $\alpha$ such that for every $\varphi \in W^*_q(\B)$ with $\|\varphi\|_{*,q}\leq 1$, there holds
    $$\int_K e^{c_1|\varphi|^\alpha}(dd^c |z|^2)^k\leq c_2.$$
    \end{theorem}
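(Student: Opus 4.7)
We argue by induction on $q$. The base case $q=1$ is the Moser--Trudinger inequality for $W^*_1$ established by Dinh--Sibony \cite{DS_decay} and Vigny \cite{Vigny}, which is precisely the conclusion at the first level with exponent range $\alpha\in[1,2)$. It is convenient to recast the theorem in an $L^p$-form: writing $d\mu=dd^c v_1\wedge\cdots\wedge dd^c v_k$, the conclusion at level $q$ is equivalent (by Taylor expanding $e^{c_1|\varphi|^\alpha}$ and applying Stirling) to a family of factorial-type bounds
\begin{equation*}
\int_K |\varphi|^{\alpha n}\,d\mu\;\leq\;A(K,\alpha,\beta)^n\,(n!)^{\alpha/2^q},\qquad n\in\N,\ \alpha<2^q,
\end{equation*}
uniform over $\varphi\in W^*_q(\B)$ with $\|\varphi\|_{*,q}\leq 1$; the strict requirement $\alpha<2^q$ comes from the geometric series produced after Stirling.

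For the inductive step, assume the $L^p$-bound at level $q-1$. Given $\varphi\in W^*_q(\B)$ with $\|\varphi\|_{*,q}\leq 1$, Definition~\ref{Def higher cplx Sobolev space} produces $\psi\in W^*_{q-1}(\B)\cap\PSH(\B)$ with $d\varphi\wedge d^c\varphi\leq dd^c\psi$ and $\|\psi\|_{*,q-1}$ bounded; subtracting a constant we may arrange $\psi\leq 0$ on a ball $K'\supset K$. The engine of the proof is a reduction inequality of the form
\begin{equation*}
\int_K |\varphi|^{2m}\,d\mu\;\leq\;C\,m^{2}\int_{K'}|\psi|^{m}\,d\mu',
\end{equation*}
where $d\mu'=dd^c w_1\wedge\cdots\wedge dd^c w_k$ for H\"older psh $w_j$'s of bounded $\mathscr{C}^{\beta'}$-norm derived from the $v_j$'s and $|z|^2$. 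One derives it by integrating by parts against a smooth cut-off of $K'$ to move $dd^c v_1$ off $|\varphi|^{2m}$, applying Cauchy--Schwarz to split the resulting $d\varphi\wedge d^c v_1$ as $(d\varphi\wedge d^c\varphi)^{1/2}(dv_1\wedge d^c v_1)^{1/2}$, invoking $d\varphi\wedge d^c\varphi\leq dd^c\psi$ on the first factor, and integrating by parts once more to lower the power of $\varphi$ by two. Iterating and feeding the outcome into the inductive $L^p$-bound on $\psi$ at level $q-1$ (exponent $2^{q-1}$) upgrades the exponent on $\varphi$ from $2^{q-1}$ to $2^q$, which completes the induction.

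The main obstacle is the rigorous execution of these integration-by-parts steps: $\varphi$ only lies in $W^{1,2}$, $\psi$ only in $W^*_{q-1}$, and the $v_j$'s are merely H\"older, so every wedge product must be interpreted via the Dinh--Sibony theory of H\"older currents, and every calculation must first be carried out on smooth approximants (convolution in $\varphi$, Demailly regularization for $\psi$ and the $v_j$'s) and then passed to the limit with uniform control of the constants. The H\"older hypothesis $\|v_j\|_{\mathscr{C}^\beta}\leq 1$ is used precisely to bound the cross-terms $dv_j\wedge d^c\varphi$ produced by Stokes, while the finiteness condition $\int dd^c\psi\wedge(dd^c|z|^2)^{k-1}<\infty$ built into Definition~\ref{Def higher cplx Sobolev space} ensures that the base mass ($n=0$ case) of the iterated $L^p$-estimate remains finite.
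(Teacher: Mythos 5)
There is a genuine gap: the ``reduction inequality''
\[
\int_K |\varphi|^{2m}\,d\mu \;\leq\; C\,m^{2}\int_{K'}|\psi|^{m}\,d\mu'
\]
is the entire load-bearing step of your induction, and the integration-by-parts sketch you give does not establish it. Starting from $\int \chi\,\varphi^{2m}\,dd^c v_1\wedge R$, one Stokes step followed by Cauchy--Schwarz and $d\varphi\wedge d^c\varphi\leq dd^c\psi$ produces, roughly, $m\big(\int\chi\varphi^{2m-2}\,dd^c\psi\wedge R\big)^{1/2}\big(\int\chi\varphi^{2m}\,dd^c\tilde v_1\wedge R\big)^{1/2}$, i.e.\ the power of $\varphi$ drops by two \emph{and one factor $dd^c v_1$ is replaced by $dd^c\psi$}. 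Iterating this naively to eliminate $\varphi$ entirely would require $m$ such steps and produce $m$ copies of $dd^c\psi$ wedged together, which exceeds the dimension $k$ for $m>k$; the iteration does not close into $\int|\psi|^m\,d\mu'$. When you instead integrate by parts again moving $dd^c\psi$, the Cauchy--Schwarz factor $d\psi\wedge d^c\psi\leq dd^c\psi_2$ drags in the \emph{next} member of the defining sequence, and one quickly finds oneself entangled in the full chain $(\varphi_1,\dots,\varphi_q)$ with no way to terminate. That is in fact exactly the chain of quantities $B_{j,m}$, $E_{j,m}$, $H_{j,m}$ the paper's proof manipulates, and the paper can close it only because it first truncates the \emph{last} element $\varphi_q$ of the defining sequence: setting $h_n=1+\max(\varphi_q,-n)/n$ and $T_n=dd^c(h_n^2/2)$ gives a bounded auxiliary function, the final link $B_{q-1,m}\lesssim\sqrt{n}\,B_{1,m-1}$ follows from $h_n\,d\varphi_{q-1}\wedge d^c\varphi_{q-1}\leq n\,T_n$, and unwinding the chain yields the crucial $B_{1,m}\lesssim n^{1/2^{q-1}}B_{1,m-1}$. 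It is this $n^{1/2^{q-1}}$ gain, not a recursion into $W^*_{q-1}$, that produces the exponent $2^q$.

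More broadly, the paper does not argue by induction on $q$ at all. It proves a pointwise psh bound (Theorem~\ref{main result}): given $\varphi$ with $\|\varphi\|_{*,q}\leq1$ and $\alpha<2^q$, there is a psh $u$ with $|\varphi|^\alpha\leq -u$ on $K$ and $\|u\|_{L^1(K)}$ controlled, built \emph{\`a la Josefson} from extremal functions $u_n$ of the level sets $K_n=\{\varphi\geq 2^n,\ \varphi_q\geq-\lambda^n\}$ together with the truncation $\max(\varphi_q,-\lambda^n)/\lambda^n$. The integral estimates on $I_{m,p,K}$ and $J_{m,p,K}$ are used to control the capacities of the $K_n$ and make the series defining $u$ converge. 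The final step is a singular Skoda-type integrability theorem from \cite{DVS_exponential} applied to $u$ against $dd^c v_1\wedge\cdots\wedge dd^c v_k$, which is where the H\"older continuity of the $v_j$ enters. Your plan bypasses all of this, but without a correct replacement for the reduction inequality it does not get off the ground. (A minor point: the base case $q=1$ with the full range $\alpha\in[1,2)$ and arbitrary H\"older $v_j$'s is due to \cite{DinhMarinescuVu} and \cite{Vigny-Vu-Lebesgue}, not to \cite{DS_decay} or \cite{Vigny}, which only give the $\alpha=1$, Lebesgue-measure statement via BMO.) If you want to pursue the inductive idea, you would at minimum have to prove a reduction inequality whose right-hand side involves the full defining sequence and a truncation parameter, at which point you will have re-derived the paper's estimates.
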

\vspace{0.2cm}

 Note that two functions in $W^*_q(\B)$ are equal if they are equal almost everywhere. By \cite[Theorem 1.1]{Vigny-Vu-Lebesgue} (see also \cite{DinhMarinescuVu}), for every function $\varphi$ in $W^*_1(\B)$ (and thus $W^*_q(\B)$), all points are Lebesgue except for points in some pluripolar set. Therefore, by considering the canonical values of $\varphi$ at its Lebesgue points, the first integral in the theorem makes sense, as $dd^c v_1 \wedge \cdots \wedge dd^c v_k$ has no mass on pluripolar sets. Throughout this paper, we always use the canonical values of $\varphi$ as above.

 The Moser-Trudinger inequality for $W^*_1$ was proven in \cite{DinhMarinescuVu} by using the slicing method. Recently, in \cite{Vigny-Vu-Lebesgue}, alongside the main goal of proving that the complement of the Lebesgue point set of functions in $W^*_1$ is pluripolar, Vigny-Vu obtained a version of the Moser-Trudinger inequality for $W^*_1$ that corresponds to the last assertion of Theorem \ref{theoremA} for $q=1$.
For $\varphi \in W^*_1(\B)$, their strategy was to bound $|\varphi|^\alpha$ by some psh function which allows them to use Skoda's integrability theorem. The construction of the psh bound is motivated by the proof of Josefson's theorem (see \cite{josefson}). Following their strategy, we construct the psh bound for $|\varphi|^\alpha$ where $\varphi \in W^*_q(\B)$ (see Theorem~\ref{main result} below) and deduce Theorem \ref{theoremA} by using a singular version of Skoda's integrability theorem that has been obtained in \cite{DVS_exponential} (see also \cite{Lucas_skodatype}).



In Section \ref{sec:4}, we will discuss the connection of the higher complex Sobolev spaces to the theory of complex Monge-Amp\`{e}re equation.
In the global setting, the class $\mathcal{E}(X,\omega)$, introduced by Guedj-Zeriahi in \cite{GZ-weighted}, is the largest class of $\omega$-psh functions on which the complex Monge-Amp\`{e}re operator is well defined and the comparison principle is valid. We will show that $\omega$-psh functions with bounded $\|\cdot\|_{*,q}$-norm belong to this space for every $q \geq 1$. Furthermore, among the subsets of $\mathcal{E}(X,\omega)$, the finite energy classes $\mathcal{E}^p(X,\omega)$ have important applications in the variational approach to complex Monge-Amp\`{e}re equation (see \cite{BBGZ-variational}). In \cite{DGLfinite-entropy}, the authors proved a Moser-Trudinger inequality for functions in $\mathcal{E}^p(X,\omega)$. The crucial point here is that the exponent in their Moser-Trudinger inequality is $1+{p}/{k}$ which converges to infinity as $p$ goes to infinity. This similar property with our space $W^*_q(X)$ motivated us to study if $W^*_q(X) \cap \PSH(X,\omega)$ is contained in some $\mathcal{E}^{p(q)}$ with $p(q)$ increasing to infinity when $q$ goes to infinity. It turns out we can choose $p(q) = q-1$.

\vspace{0.2cm}
\begin{theorem} \label{MAglobal}
Let $(X,\omega)$ be a compact K\"{a}hler manifold. Then we have the following inclusions:
\begin{itemize}
    \item[(1)] $ W^*_q(X) \cap \PSH(X,\omega) \subset\mathcal{E}(X,\omega)$ for $q\geq 1$,
    \item[(2)]  $W^*_q(X)\cap \PSH(X,\omega) \subset \mathcal{E}^{q-1}(X,\omega)$ for $q\geq2$.
\end{itemize}
\end{theorem}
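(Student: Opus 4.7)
My plan is to treat both statements simultaneously by establishing, for every $\varphi\in W^{*}_q(X)\cap\PSH(X,\omega)$ normalised so that $\varphi\le 0$, the uniform energy estimate
\begin{equation*}
    \sup_{j\ge 1}\int_X(-\varphi_j)^{p}(\omega+dd^c\varphi_j)^{k}<\infty, \qquad \varphi_j:=\max(\varphi,-j),
\end{equation*}
for $p=1$ (needed for (1)) and for $p=q-1$ when $q\ge 2$ (needed for (2)). Granting the $p=1$ estimate, Chebyshev's inequality yields $(\omega+dd^c\varphi_j)^{k}(\{\varphi\le -j\})\le C/j\to 0$, which is the Guedj--Zeriahi criterion for $\varphi\in\mathcal{E}(X,\omega)$ and proves (1). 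Granting the $p=q-1$ estimate, a Fatou-type passage to the non-pluripolar Monge--Amp\`ere product (legitimate thanks to (1)) gives $\int_X(-\varphi)^{q-1}(\omega+dd^c\varphi)^{k}<\infty$ and proves (2).

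The main tool is the integration-by-parts identity
\begin{equation*}
    \int_X(-\varphi_j)^p(\omega+dd^c\varphi_j)^k=\int_X(-\varphi_j)^p\omega^k+p\sum_{i=0}^{k-1}\int_X(-\varphi_j)^{p-1}d\varphi_j\wedge d^c\varphi_j\wedge\omega^i\wedge(\omega+dd^c\varphi_j)^{k-1-i},
\end{equation*}
valid for bounded $\omega$-psh $\varphi_j$ because each factor $\omega^i\wedge(\omega+dd^c\varphi_j)^{k-1-i}$ is closed and positive. Combined with the pointwise bound $d\varphi_j\wedge d^c\varphi_j\le d\varphi\wedge d^c\varphi$ and the defining inequality $d\varphi\wedge d^c\varphi\le c_\varphi\omega+dd^c\psi$ from Definition \ref{Def higher cplx Sobolev space}, where $\psi\in W^{*}_{q-1}(X)\cap\QPSH(X)$ for $q\ge 2$ and $\psi\in\QPSH(X)$ for $q=1$, the Sobolev sum is majorised by $p\sum_{i}\int_X(-\varphi_j)^{p-1}(c_\varphi\omega+dd^c\psi)\wedge\omega^i\wedge(\omega+dd^c\varphi_j)^{k-1-i}$. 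When $p=1$ the factor $(-\varphi_j)^{p-1}\equiv 1$ is constant, and cohomological invariance (the current $c_\varphi\omega+dd^c\psi$ is closed positive in the class $c_\varphi[\omega]$) identifies each summand with $c_\varphi\int_X\omega^k$, so the $p=1$ estimate closes without any further use of $\psi$, proving (1) for all $q\ge 1$. When $p\ge 2$ one more integration by parts transfers $dd^c\psi$ onto $(-\varphi_j)^{p-1}$; expanding via $dd^c((-\varphi_j)^{p-1})=(p-1)(p-2)(-\varphi_j)^{p-3}d\varphi_j\wedge d^c\varphi_j-(p-1)(-\varphi_j)^{p-2}dd^c\varphi_j$ and the identity $dd^c\varphi_j=(\omega+dd^c\varphi_j)-\omega$ decomposes the result into sign-definite pieces, the nonpositive ones being discarded (after normalising $\psi\le 0$). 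What remains is a sum of mixed energies of the form $\int_X(-\psi)(-\varphi_j)^{p-2}\omega^a\wedge(\omega+dd^c\varphi_j)^{b}$ with $a+b=k$; a H\"older-type splitting between $(-\psi)$ and $(-\varphi_j)^{p-2}$ and the inductive hypothesis applied to $\psi$ (rescaled to be $\omega$-psh) close the recursion after $q-1$ steps, terminating at $\psi_{q-1}\in\QPSH(X)$ where cohomology again suffices.

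The main obstacle will be making the integrations by parts and the cohomological identifications rigorous when some of the potentials involved are unbounded. To handle this I plan to perform a simultaneous truncation, setting $\psi_\ell:=\max(\psi,-\ell)$ in parallel with $\varphi_j$, carrying out every computation with the everywhere bounded objects so that the classical Bedford--Taylor framework applies at each step, and then letting $\ell\to\infty$ first and $j\to\infty$ afterwards. Monotone convergence of truncations together with lower semicontinuity of $\varphi\mapsto\int(-\varphi)^{p}(\omega+dd^c\varphi)^{k}$ on $\mathcal{E}^{p}(X,\omega)$ will transmit the uniform bound to the limit, and careful bookkeeping of exponents along the chain $\psi_0,\psi_1,\ldots,\psi_{q-1}$ supplied by iterating Definition \ref{Def higher cplx Sobolev space} ensures that at every stage the power of the corresponding Lelong function drops by one, so the induction terminates with finitely many integrals, each controlled by $\int_X\omega^k$ and the $L^1(\omega^k)$-norms of $\varphi$ and of the $\psi_i$'s (all finite since quasi-psh functions are in $L^1$).
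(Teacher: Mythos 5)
Your proof of part (1) matches the paper's: integrate by parts, use $d\varphi_j\wedge d^c\varphi_j\le c_\varphi\omega+dd^c\psi$ with $c_\varphi\omega+dd^c\psi$ closed, invoke cohomological invariance to reduce to $c_\varphi\int_X\omega^k$, then Chebyshev and the Guedj--Zeriahi criterion (Lemma~\ref{criE}). That part is fine.

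For part (2) there is a genuine gap. Your second integration by parts replaces $\int(-\varphi_j)^{p-1}dd^c\psi\wedge\omega^i\wedge\omega_{\varphi_j}^{k-1-i}$ by terms of the form $\int(-\psi)(-\varphi_j)^{p-2}\omega^a\wedge\omega_{\varphi_j}^b$, which is correct, but the claim that ``a H\"older-type splitting \ldots and the inductive hypothesis applied to $\psi$ close the recursion \ldots terminating \ldots where cohomology again suffices'' does not hold. After Young's inequality you obtain $\int(-\psi)^{p-1}\omega^a\wedge\omega_{\varphi_j}^b$, in which the weight $(-\psi)^{p-1}$ is paired against the mixed form $\omega^a\wedge\omega_{\varphi_j}^b$. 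When $p\ge 2$ this weight is non-constant, so cohomological invariance gives nothing, and the inductive hypothesis only tells you that $\int(-\psi)^{p-1}\omega_\psi^k$ is finite --- a \emph{pure} energy of $\psi$, not the \emph{mixed} one appearing here. Controlling such mixed energies requires precisely the Guedj--Zeriahi comparison theorems, which the paper records as Proposition~\ref{mixEp} and Corollary~\ref{compareEp} and applies directly to $\int_X(-\varphi)^{q-2}\omega_\psi\wedge\omega_\varphi^l\wedge\omega^{k-l-1}$ without any second integration by parts. Your extra integration by parts does not avoid this machinery; it only reshuffles the integral into a different mixed-energy form, and without invoking Proposition~\ref{mixEp} (or re-proving it) the recursion does not close. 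Put differently: there is no ``$q-1$ steps terminating at cohomology''; the argument is a single induction on $q$ whose inductive step is exactly the appeal to Corollary~\ref{compareEp}, and that appeal is the missing ingredient in your outline.
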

\vspace{0.2cm}

\noindent We prove this theorem by induction, relying on estimates around energy functionals. 

 In the local setting, the domain of definition of the Monge-Ampère operator $\mathcal{D}(\Omega)$ has been well understood after the works of Cegrell (\cite{Cegrell-MA}) and B\l ocki (\cite{Blocki-MA}). Recall that by \cite[Theorem 1.1]{Blocki-MA}, a function $\varphi$ belongs to $\mathcal{D}(\Omega)$ if $\varphi$ is a negative psh function on $\Omega$ and for every $z \in \Omega$, there exists an open neighborhood $U$ of $z$ in $\Omega$ and a sequence $\{\varphi_n\}$ of smooth negative psh functions such that the sequences 
 \begin{equation}\label{condlocalMA}
            |\varphi_n|^{k-p-2} d\varphi_n\wedge d^c\varphi_n \wedge (dd^c \varphi_n)^{p} \wedge (dd^c |z|^2)^{k-p-1}, \hspace{1cm} p = 0,\ldots,k-2,
        \end{equation}
are locally weakly bounded in $U$. Under this hypothesis, it is possible to define the Monge-Amp\`ere operator $(dd^c \varphi)^k$ in a satisfactory way (in the sense that we have the continuity of the Monge-Amp\`ere operator under decreasing psh approximations). It has been proved in \cite{Blocki-MA-dim2} that in $\mathbb{C}^2$, $W^{1,2}_{\local} \cap \PSH(\Omega) \subset \mathcal{D}(\Omega)$. Thus, the result holds for $W^*_{1,\local}$, where $W^*_{1,\local}$ is the set of functions that locally are functions in $W^*_1$. Our next theorem generalizes this fact by showing that the Monge-Amp\`ere operator $(dd^c\varphi)^k$ is well-defined in the sense of Cegrell-B\l ocki for psh locally $q$-complex Sobolev functions when $q\geq k-1$. 
\vspace{0.2cm}
\begin{theorem}\label{localMA}
    $ W^*_{q,\local}(\Omega)\cap \PSH(\Omega)\subset\mathcal{D}(\Omega)$ for $q\geq k-1$.
\end{theorem}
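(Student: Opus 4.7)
The plan is to invoke B\l ocki's characterization of $\mathcal{D}(\Omega)$ recalled in \eqref{condlocalMA}. First, given $\varphi \in W^*_{q,\local}(\Omega) \cap \PSH(\Omega)$ with $q \geq k-1$ and a point $z_0 \in \Omega$, I would unfold Definition~\ref{Def higher cplx Sobolev space} iteratively to obtain on a small neighborhood $U$ of $z_0$ a chain
$$\varphi = \psi_0,\; \psi_1,\; \ldots,\; \psi_q \in \PSH(U)$$
with $d\psi_{j-1} \wedge d^c \psi_{j-1} \leq dd^c \psi_j$ for $1 \leq j \leq q$ and $\int_U dd^c \psi_q \wedge (dd^c|z|^2)^{k-1} < \infty$. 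After shrinking $U$ I may assume every $\psi_j \leq -1$, and convolution with a standard mollifier produces decreasing smooth psh approximants $\psi_{j,n} \searrow \psi_j$ on a slightly smaller open set, while preserving the chain inequalities $d\psi_{j-1,n} \wedge d^c \psi_{j-1,n} \leq dd^c \psi_{j,n}$ (convolution sends pointwise positivity of $(1,1)$-forms to pointwise positivity). Setting $\varphi_n := \psi_{0,n}$, it then suffices to bound, uniformly in $n$ and for each $p \in \{0,\ldots,k-2\}$,
$$J_p^n := \int_V |\varphi_n|^{k-p-2}\, d\varphi_n \wedge d^c \varphi_n \wedge (dd^c \varphi_n)^p \wedge (dd^c|z|^2)^{k-p-1}$$
for any $V \Subset U$.

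The main step will be to prove these bounds by induction on $p$ in the following stronger form: \emph{if $u_0, u_1, \ldots, u_{p+1}$ are smooth negative psh functions on $U$ with $du_{j-1} \wedge d^c u_{j-1} \leq dd^c u_j$ for $1 \leq j \leq p+1$ and $\int_U dd^c u_{p+1} \wedge (dd^c|z|^2)^{k-1} < \infty$, then}
$$\int_V |u_0|^{k-p-2}\, du_0 \wedge d^c u_0 \wedge (dd^c u_0)^p \wedge (dd^c|z|^2)^{k-p-1} \leq C,$$
with $C$ depending only on $V$, $U$ and the trace masses of the $u_j$. For the base case $p=0$, the chain inequality reduces the claim to the finiteness of $\int_V |u_0|^{k-2} dd^c u_1 \wedge (dd^c|z|^2)^{k-1}$, which follows by Chern-Levine-Nirenberg integrability of psh functions against trace measures of positive closed currents of finite mass. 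For the inductive step, I would first apply $du_0 \wedge d^c u_0 \leq dd^c u_1$ to bound the $(p+1)$-level integral by
$$\int_V |u_0|^{k-p-3}\, dd^c u_1 \wedge (dd^c u_0)^{p+1} \wedge (dd^c|z|^2)^{k-p-2},$$
and then integrate by parts against a cutoff $\chi \in C^\infty_c(U)$ with $\chi \equiv 1$ on $V$, transferring one factor of $dd^c u_0$ either onto the weight $|u_0|^{k-p-3}$ (which by the chain rule for $dd^c((-u_0)^{k-p-3})$ produces a term of the form $|u_0|^{k-p-4}\, du_0 \wedge d^c u_0$ plus a term of the form $|u_0|^{k-p-4} dd^c u_0$) or onto $\chi$ (producing a boundary-type term). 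Combining these pieces with another application of $du_0 \wedge d^c u_0 \leq dd^c u_1$ and a Cauchy-Schwarz pairing for positive $(1,1)$-currents, the resulting expressions reduce to quantities controlled by the inductive hypothesis applied either to the chain starting from $u_0$ (with parameter $p$) or to the shifted chain $u_1, u_2, \ldots, u_{p+1}$ (also with parameter $p$, hence requiring one fewer link). Each inductive step consumes exactly one link of the chain, which is why the threshold $q \geq k-1$ is precisely what is needed to reach $p = k-2$.

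The main obstacle will be executing the integration-by-parts and the Cauchy-Schwarz rearrangement in the inductive step while preserving positivity of all currents and while keeping the constants uniform in $n$; the bookkeeping of the chain indices (which $u_j$ the gradient inequality is applied with, and on which factor the derivatives are moved) must be precise so that each step reduces cleanly to a lower-level hypothesis rather than generating terms outside the inductive framework. Uniformity of the constants along the approximations $\psi_{j,n} \searrow \psi_j$ follows from monotone convergence and the estimates on $\int dd^c \psi_{j,n} \wedge (dd^c|z|^2)^{k-1}$ once the smooth estimates are in place. Combining the uniform bounds $\sup_n J_p^n < \infty$ for all $p = 0, \ldots, k-2$, B\l ocki's criterion \eqref{condlocalMA} yields $\varphi \in \mathcal{D}(\Omega)$, completing the proof.
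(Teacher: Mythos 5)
Your high-level architecture (mollify to get a smooth defining sequence, verify B\l ocki's criterion \eqref{condlocalMA}, and prove a uniform bound on the relevant integrals by induction using Stokes, Cauchy--Schwarz, and the chain inequalities $du_{j-1}\wedge d^c u_{j-1}\le dd^c u_j$) is essentially the paper's strategy, and the observation that the threshold $q\geq k-1$ is exactly what is consumed by the iterated integration by parts is correct. However, the precise inductive statement you propose is false at its base case, and that is not a bookkeeping detail but a real obstruction.

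You claim that for $p=0$ a chain of length $1$ suffices: given smooth negative psh $u_0,u_1$ with $du_0\wedge d^cu_0\le dd^cu_1$ and $\|dd^cu_1\|<\infty$, the integral $\int_V|u_0|^{k-2}\,du_0\wedge d^cu_0\wedge\omega^{k-1}\le\int_V|u_0|^{k-2}\,dd^cu_1\wedge\omega^{k-1}$ is controlled by Chern--Levine--Nirenberg. That is not so once $k\ge3$: CLN only gives integrability of $|u_0|$ (first power) against the trace measure, and the power $k-2$ cannot be bought for free. Concretely, mimic Example~\ref{mainexample}: take $u_0=-(-\log|z_1|^2)^\alpha$ with $\alpha\in(1/k,1/2)$ and $u_1=c\cdot\bigl(-(-\log|z_1|^2)^{2\alpha}\bigr)$ for a suitable $c>0$, so that $du_0\wedge d^cu_0\le dd^cu_1$ and $\|dd^cu_1\|<\infty$ (both finite precisely because $\alpha<1/2$). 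Then $|u_0|^{k-2}\,dd^cu_1\wedge\omega^{k-1}$ has density comparable to $|z_1|^{-2}(-\log|z_1|^2)^{\alpha k-2}$, and the radial integral $\int_0^\epsilon r^{-1}(\log 1/r)^{\alpha k-2}\,dr$ diverges since $\alpha k-2>-1$. Thus your base case, as stated, is simply not true; smoothing and rescaling the example keeps the masses bounded while the integral blows up, so adding "constants depend on the trace masses'' does not rescue it.

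What is missing is a second induction on the power of the weight. Controlling $|u_0|^m$ requires consuming one chain link per unit of $m$, which is exactly how the paper's Proposition~\ref{key prop for D} is organized: it is a double induction on the number of $dd^c$ factors and on $m$, with the constraint $q_j\ge p-1+m$ encoding the fact that the available chain depth must grow with both. Your single-parameter induction rigidly couples the chain length to the number of $dd^c$ factors ($p+1$ links for $p$ factors), leaving no chain to absorb the weight power $k-p-2$; at $p=0$ there is one link but a power of $k-2$, which is the worst mismatch. To repair the argument you would need to restate the inductive hypothesis roughly as the paper does, allowing the power $m$ and the number of $dd^c$ factors to vary independently, require chain depth at least on the order of $m$ plus the number of factors, and run a nested induction in $m$ inside each step of the induction on the number of factors.
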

\vspace{0.2cm}

\noindent We prove this theorem by induction, relying on the condition \ref{condlocalMA}.


\vspace{0.2cm}

\noindent 
\textbf{Acknowledgments.} The first named author is supported by the MOE (Singapore) grant MOE-T2EP20120-0010. The second named author is supported by the Singapore International Graduate Award (SINGA). We would like to thank our teacher Tien-Cuong Dinh for useful discussions and constantly support during the preparation of this paper. In fact, the definition \ref{Def higher cplx Sobolev space} belongs to him. We also thank Qu\^oc Anh Ng\^o for  discussions on Sobolev spaces, Hoang-Son Tran for his comments on the first draft of this paper, and Gabriel Vigny for discussions concerning Section~\ref{sec 2}. Finally, we would like to thank the
anonymous referee for many suggestions that greatly improved the paper.

\section{Higher complex Sobolev space}\label{sec 2}
In this section, we prove some basic properties and give examples of our spaces. We also study their corresponding functional capacities.

\subsection{Quasinorms and compactness}

We now define some norms and quasinorms on $W^*_q$. First, we consider the local setting. Let $\Omega$ be a bounded domain in $\C^k$ and $\varphi \in W^*_q({\Omega}) $. Then by definition, there exist psh functions $\varphi_1,\ldots,\varphi_q$ in $\Omega$ such that 
    \begin{equation}\label{Defi Wq}
\|dd^c \varphi_j\| < \infty \text{ and }d\varphi_{j-1}\wedge \dc\varphi_{j-1}\leq\ddc\varphi_j \ \text{for all} \ j=1,\ldots,q.
    \end{equation}
Here we put $\varphi_0:= \varphi$. We call a sequence $(\varphi_1,\ldots,\varphi_q)$ satisfying (\ref{Defi Wq}) is a \textit{defining sequence} of $\varphi$ in $W^*_q(\Omega)$.
We define the \textit{$q$-star quasinorm} for $W_q^*(\Omega)$ as follows
\begin{equation}\label{def local quasinorm}\|\varphi\|_{W_q^*(\Omega)}:=\|\varphi\|_{L^2(\Omega)}+  \inf \Big\{ \sum_{j=1}^q \left \| dd^c \varphi_j \right \|^{1/2^j}  \Big\},\end{equation}
where the infimum is taken over all defining sequence $(\varphi_1,\ldots,\varphi_q)$ of $\varphi$. It is easy to see that one can replace $\inf$ by $\min$ in the definition \eqref{def local quasinorm}.

Next, we consider the global setting. Let $(X,\omega)$ be a compact K\"ahler manifold of dimension $k$ and $\varphi \in W^*_q({X})$. Then by definition, there exist qpsh functions $\varphi_1,\ldots,\varphi_q$ in $X$ and constants $c_1,\ldots,c_q\geq0$ such that
    \begin{equation}\label{Defi Wq X}
d\varphi_{j-1}\wedge \dc\varphi_{j-1}\leq c_j\omega+\ddc\varphi_j \ \text{for all} \ j=1,\ldots,q.
    \end{equation}    
Here we put $\varphi_0:= \varphi$. We call a sequence $((c_1,\varphi_1),\ldots,(c_q,\varphi_q))$  satisfying (\ref{Defi Wq X}) is a \textit{defining sequence} of $\varphi$ in $W^*_q(X)$. We define the \textit{$q$-star norm} for $W^*(X)$ as follows
\begin{equation}\label{def global norm}
\|\varphi\|_{W^*_q(X)}:=\|\varphi\|_{L^2(X)}+  \inf \Big\{ \sum_{j=1}^q c_j^{1/2^j} \Big\},\end{equation}
where the infimum is taken over all defining sequence $((c_1,\varphi_1),\ldots,(c_q,\varphi_q))$ of $\varphi$. As in the local case, one can replace $\inf$ by $\min$ in the definition \eqref{def global norm}. We note also that by Poincar\'{e}-Wirtinger inequality, if we replace $L^2$ in the definition of $\|\cdot\|_{W^*_q(X)}$ by $L^1$, we obtain an equivalent quasinorm. In the sequel, for simplicity, we will use the notation $\|\cdot\|_{*,q}$ instead of $\|\cdot\|_{W_q^*(\Omega)}$ and $\|\cdot\|_{W_q^*(X)}$ when there is no possible confusion of the domain.

\begin{proposition}\label{prop quasinorm local case}
    The function $\varphi\mapsto\|\varphi\|_{*,q}$ defines a quasinorm on $W^*_q$.
\end{proposition}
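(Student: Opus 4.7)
The plan is to verify the three defining properties of a quasinorm: (i) positivity and definiteness, (ii) absolute homogeneity $\|\lambda\varphi\|_{*,q} = |\lambda|\,\|\varphi\|_{*,q}$, and (iii) a quasi-triangle inequality $\|\varphi+\psi\|_{*,q} \leq C(\|\varphi\|_{*,q} + \|\psi\|_{*,q})$ for some universal $C$. The central algebraic tool throughout will be the pointwise current inequality
$$d(f+g)\wedge d^c(f+g) \leq 2\big(df \wedge d^c f + dg \wedge d^c g\big),$$
which is equivalent to the positivity of $d(f-g)\wedge d^c(f-g)$.

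Positivity is immediate from the definition, and the definiteness part follows from $\|\varphi\|_{L^2} \leq \|\varphi\|_{*,q}$. For homogeneity with $\lambda > 0$, I will prove by induction on $j$ that $(\lambda^{2^j}\varphi_j)_{j=1}^q$ is a defining sequence for $\lambda\varphi$ whenever $(\varphi_j)$ is one for $\varphi$, using $d(\lambda^{2^{j-1}}\varphi_{j-1}) \wedge d^c(\lambda^{2^{j-1}}\varphi_{j-1}) = \lambda^{2^j} d\varphi_{j-1}\wedge d^c\varphi_{j-1} \leq dd^c(\lambda^{2^j}\varphi_j)$. Since $\|dd^c(\lambda^{2^j}\varphi_j)\|^{1/2^j} = \lambda\|dd^c\varphi_j\|^{1/2^j}$, passing to the infimum gives $\|\lambda\varphi\|_{*,q} = \lambda\|\varphi\|_{*,q}$. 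For $\lambda < 0$, the identity $d(-\varphi)\wedge d^c(-\varphi) = d\varphi\wedge d^c\varphi$ shows that $-\varphi$ shares every defining sequence of $\varphi$, so $\|-\varphi\|_{*,q} = \|\varphi\|_{*,q}$.

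The main obstacle is the quasi-triangle inequality. Given defining sequences $(\varphi_j)$ and $(\psi_j)$ for $\varphi$ and $\psi$, my plan is to build an explicit defining sequence for $\varphi+\psi$ of the form $(a_j(\varphi_j+\psi_j))_{j=1}^q$, where the constants are determined by $a_0=1$ and $a_j=2a_{j-1}^2$, yielding $a_j = 2^{2^j-1}$. The inductive step follows by combining the key current inequality with the psh hypotheses:
$$d\big(a_{j-1}(\varphi_{j-1}+\psi_{j-1})\big)\wedge d^c\big(a_{j-1}(\varphi_{j-1}+\psi_{j-1})\big) \leq 2a_{j-1}^2\big(dd^c\varphi_j + dd^c\psi_j\big) = dd^c\big(a_j(\varphi_j+\psi_j)\big),$$
and $a_j(\varphi_j+\psi_j)$ is plurisubharmonic because $\varphi_j,\psi_j$ are. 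Using $a_j^{1/2^j} = 2^{1-2^{-j}} \leq 2$ together with the subadditivity $(s+t)^{1/2^j} \leq s^{1/2^j} + t^{1/2^j}$, I obtain
$$\|dd^c(a_j(\varphi_j+\psi_j))\|^{1/2^j} \leq 2\big(\|dd^c\varphi_j\|^{1/2^j} + \|dd^c\psi_j\|^{1/2^j}\big).$$
Summing over $j$ and taking infimum over defining sequences will produce $\|\varphi+\psi\|_{*,q} \leq 2(\|\varphi\|_{*,q} + \|\psi\|_{*,q})$.

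The global case on $(X,\omega)$ is handled identically: with defining data $((c_j^\varphi,\varphi_j))$ and $((c_j^\psi,\psi_j))$, the same constants $a_j$ produce $\big(a_j(c_j^\varphi + c_j^\psi),\, a_j(\varphi_j+\psi_j)\big)$ as a defining sequence for $\varphi+\psi$, and the analogous estimate $a_j^{1/2^j}(c_j^\varphi + c_j^\psi)^{1/2^j} \leq 2((c_j^\varphi)^{1/2^j} + (c_j^\psi)^{1/2^j})$ delivers the same constant $C=2$. The delicate point of the whole argument is precisely this bookkeeping of the double-exponential factors $a_j = 2^{2^j-1}$, which is what forces and justifies the exponents $1/2^j$ in the definition of the quasinorm.
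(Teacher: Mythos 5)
Your proposal is correct and takes essentially the same approach as the paper: you build the defining sequence for $\varphi+\psi$ with the identical coefficients $a_j = 2^{2^j-1}$ (the paper writes them as $4^{2^{j-1}}/2$), use the same Cauchy--Schwarz inductive step $d(u+v)\wedge d^c(u+v)\le 2(du\wedge d^c u + dv\wedge d^c v)$, and handle homogeneity via the same rescaled defining sequence $(|\lambda|^{2^j}\varphi_j)$. The only immaterial difference is in the bookkeeping of the quasinorm constant: the paper tracks $a_j^{1/2^j}=2^{1-1/2^j}$ through the induction and lands on the slightly sharper constant $2^{1-1/2^q}$, whereas you settle for the uniform bound $2$, which is equally sufficient for a quasinorm.
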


\begin{proof}
We begin with the local setting. We first check the homogeneity. Let $\psi = \lambda \varphi$, then $(\varphi_1,\ldots,\varphi_q)$ is a defining sequence for $\varphi$ iff $(|\lambda|^{2} \varphi_1,\ldots,|\lambda|^{2^q} \varphi_q)$ is a defining sequence for $\psi$. Hence, $\|\psi\|_{*,q} = |\lambda|\|\varphi\|_{*,q}$.

     We now only need to check the quasi-triangle inequality. Let $\varphi$ and $\psi$ be functions in $W^*_q(\Omega)$ where $(\varphi_1,\ldots,\varphi_q)$ and $(\psi_1,\ldots,\psi_q)$ are two corresponding defining sequences. Put $f=\varphi + \psi$. We claim that $(f_1,\ldots,f_q)$, with $f_j = \frac{4^{2^{j-1}}}{2}(\varphi_j+\psi_j)$, is a defining sequence of $f$ satisfying 
\[\sum_{j=1}^q \left \| dd^c f_j \right \|^{1/2^j} \leq 2^{1-1/2^q}\Big( \sum_{j=1}^q \left \| dd^c \varphi_j \right \|^{1/2^j}+\sum_{j=1}^q \left \| dd^c \psi_j \right \|^{1/2^j}\Big).\]

We prove it by induction on $q$. For $q=1$, we have
    \[df\wedge d^c f = d\varphi\wedge d^c \varphi+ d\psi\wedge d^c \psi + \left ( d\varphi\wedge d^c \psi+ d\psi\wedge d^c \varphi\right ).\]
    It follows from Cauchy-Schwarz inequality that
    \[ d\varphi\wedge d^c \psi+ d\psi\wedge d^c \varphi \leq   d\varphi\wedge d^c \varphi +  d\psi\wedge d^c \psi.\]
    This implies that $
        df\wedge d^c f \leq 2(d\varphi\wedge d^c \varphi +  d\psi\wedge d^c \psi) \leq  dd^c \left (2 \varphi_1 + 2 \psi_1 \right )$. 
Then for $f_1 = 2 (\varphi_1 +  \psi_1) $, we have $$df\wedge d^c f \leq  dd^c f_1 \text{ and }\|dd^c f_1\|^{1/2} \leq \sqrt{2}(\|dd^c \varphi_1\|^{1/2} + \|dd^c \psi_1\|^{1/2} ).$$
Hence, the proof for $q=1$ is complete.

Now, we assume that the desired property is true for $q-1$ where $q\geq 2$. We have $f_{q-1} = \frac{4^{2^{q-2}}}{2}(\varphi_{q-1}+\psi_{q-1}$). It thus follows from Cauchy-Schwarz inequality and the definition of defining sequences that
\[df_{q-1} \wedge d^c f_{q-1} \leq \frac{4^{2^{q-1}}}{2} dd^c \left ( \varphi_{q} +  \psi_{q} \right ).\]
Then for $f_q = \frac{4^{2^{q-1}}}{2}(\varphi_q+\psi_q)$, we have
\[df_{q-1} \wedge d^c f_{q-1} \leq \ddc f_q \text{ and } \|dd^c f_q\|^{1/2^q} \leq 2^{1-1/2^q}\left(\|dd^c \varphi_q\|^{1/2^q} + \|dd^c \psi_q\|^{1/2^q} \right).\]
Thus, by the induction hypothesis, we have
\begin{align*}
    \sum_{j=1}^q \left \| dd^c f_j \right \|^{1/2^j}
    &=\sum_{j=1}^{q-1} \left \| dd^c f_j \right \|^{1/2^j}+ \left \| dd^c f_q \right \|^{1/2^q}\\
 &\leq2^{1-1/2^{q-1}}\Big( \sum_{j=1}^{q-1} \left \| dd^c \varphi_j \right \|^{1/2^j}+\sum_{j=1}^{q-1} \left \| dd^c \psi_j \right \|^{1/2^j}\Big)\\&+ 2^{1-1/2^q}\Big(\|dd^c \varphi_q\|^{1/2^q} + \|dd^c \psi_q\|^{1/2^q} \Big)\\
&\leq2^{1-1/2^{q}}\Big( \sum_{j=1}^{q} \left \| dd^c \varphi_j \right \|^{1/2^j}+\sum_{j=1}^{q} \left \| dd^c \psi_j \right \|^{1/2^j}\Big),
\end{align*}
as desired.

Next, we consider the global setting. The homogeneity is clear. Now let $\varphi$ and $\psi$ be functions in $W^*_q(X)$ where $((\alpha_1,\varphi_1),\ldots, (\alpha_q,\varphi_q))$ and $((\beta_1,\psi_1),\ldots,(\beta_q,\psi_q))$ are corresponding defining sequences. Let $f=\varphi+\psi$. As in the local case, it is sufficient to prove that $((\gamma_1,f_1),\ldots,(\gamma_q,f_q))$, with $\gamma_j = \frac{4^{2^{j-1}}}{2}(\alpha_j+\beta_j)$ and $ f_j = \frac{4^{2^{j-1}}}{2} (\varphi_j+\psi_j)$, is a defining sequence of $f$ satisfying
\[\sum_{j=1}^q \gamma_j^{1/2^j} \leq 2^{1-1/2^{q}} \Big(\sum_{j=1}^q \alpha_j^{1/2^j} + \sum_{j=1}^q \beta_j^{1/2^j}\Big).\]

We prove it by induction on $q$. For $q=1$, the same computation as in the local case gives
\begin{equation*}\label{df dcf global}
df\wedge d^c f \leq \left ( 2\alpha_1  +  2\beta_1\right ) \omega +  dd^c \left (2\varphi_1+2\psi_1\right ).
\end{equation*}
Then for $\gamma_1 = 2(\alpha_1 + \beta_1)$ and $f_1 = 2(\varphi_1+\psi_1)$, we have
\[df\wedge d^c f\leq \gamma_1\omega+\ddc f_1 \text{ and }\gamma_1^{1/2} \leq \sqrt{2}(\alpha_1^{1/2} + \beta_1^{1/2}).\]
This finishes the proof for $q=1$. Now we assume that the desired property is true for $q-1$ where $q\geq 2$. We have $\gamma_{q-1} = \frac{4^{2^{q-2}}}{2}(\alpha_{q-1}+\beta_{q-1}) $ and $f_{q-1} = \frac{4^{2^{q-2}}}{2}(\varphi_{q-1}+\psi_{q-1}$). It thus follows from Cauchy-Schwarz inequality and the definition of defining sequences that
\[df_{q-1} \wedge d^c f_{q-1} \leq \frac{4^{2^{q-1}}}{2}(\alpha_{q}+\beta_q)\omega+ \frac{4^{2^{q-1}}}{2} dd^c \left (\varphi_{q} + \psi_{q} \right ).\]
Then for $\gamma_q = \frac{4^{2^{q-1}}}{2}(\alpha_q+\beta_q)$ and $f_q = \frac{4^{2^{q-1}}}{2}(\varphi_q+\psi_q)$, we have
\[df_{q-1} \wedge d^c f_{q-1} \leq\gamma_q\omega+\ddc f_q \text{ and }
\gamma_q^{1/2^q} \leq 2^{1-1/2^{q}}\left(\alpha_q^{1/2^q} + \beta_q^{1/2^q} \right).\]
It thus follows from the induction hypothesis that
\begin{align*}
    \sum_{j=1}^q \gamma_j^{1/2^j} &=\sum_{j=1}^{q-1} \gamma_j^{1/2^j}+\gamma_q^{1/2^q}\\
    &\leq 2^{1-1/2^{q-1}} \Big(\sum_{j=1}^{q-1} \alpha_j^{1/2^j} + \sum_{j=1}^{q-1} \beta_j^{1/2^j}\Big)+ 2^{1-1/2^{q}}\Big(\alpha_q^{1/2^q} + \beta_q^{1/2^q} \Big)\\
    &\leq 2^{1-1/2^{q}} \Big(\sum_{j=1}^q \alpha_j^{1/2^j} + \sum_{j=1}^q \beta_j^{1/2^j}\Big),
\end{align*}
as desired.
The proof is complete.\end{proof}

We now consider a different norm on $W^*_q$. In the local setting, we define \begin{equation}\label{def local norm}\|\varphi\|_{**,q}:=\|\varphi\|_{L^2(\Omega)}+  \inf \Big\{ \left \| dd^c \varphi_q \right \|^{1/2^q}  \Big\},\end{equation}
where the infimum is taken over all defining sequence $(\varphi_1,\ldots,\varphi_q)$ of $\varphi$. In the global setting, we define
\begin{equation}\label{def norm global case}
    \|\varphi\|_{**,q} = \|\varphi\|_{L^2(X)} + \inf \{c_q^{1/2^q} \},
\end{equation} where the infimum is taken over all defining sequence $((c_1,\varphi_1),\ldots,(c_q,\varphi_q))$ of $\varphi$.

\begin{proposition}\label{prop norm case}
    The function $\varphi\mapsto\|\varphi\|_{**,q}$ defines a norm on $W^*_q$.
\end{proposition}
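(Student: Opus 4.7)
Non-negativity is immediate, and positive definiteness follows from $\|\varphi\|_{**,q}\geq\|\varphi\|_{L^2}$, which forces $\varphi=0$ whenever the norm vanishes. For homogeneity I would use the scaling bijection: $(\varphi_1,\ldots,\varphi_q)$ is a defining sequence for $\varphi$ in the local setting if and only if $(\lambda^2\varphi_1,\lambda^4\varphi_2,\ldots,\lambda^{2^q}\varphi_q)$ is one for $\lambda\varphi$; taking the $1/2^q$-th root of the mass of the last entry produces a factor $|\lambda|$, and passing to the infimum gives $\|\lambda\varphi\|_{**,q}=|\lambda|\|\varphi\|_{**,q}$. The global case is analogous, with the constants $c_j$ rescaled by $\lambda^{2^j}$.

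The core of the proof is the triangle inequality. The key pointwise estimate I would exploit is
\[
d(u+v)\wedge d^c(u+v)\leq(1+\lambda)\,du\wedge d^cu+(1+\lambda^{-1})\,dv\wedge d^cv,\qquad\lambda>0,
\]
which follows from the positivity of $d(\sqrt{\lambda}\,u-v/\sqrt{\lambda})\wedge d^c(\sqrt{\lambda}\,u-v/\sqrt{\lambda})$. Given defining sequences $(\varphi_1,\ldots,\varphi_q)$ for $\varphi$ and $(\psi_1,\ldots,\psi_q)$ for $\psi$ (with associated constants $\alpha_j,\beta_j$ in the global case), I would construct a defining sequence for $f=\varphi+\psi$ of the form $f_j=u_j\varphi_j+v_j\psi_j$, where $u_0=v_0=1$ and $u_j=(1+\lambda_j)u_{j-1}^2$, $v_j=(1+\lambda_j^{-1})v_{j-1}^2$ for parameters $\lambda_j>0$ to be chosen later. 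A direct induction using the displayed inequality shows that $(f_1,\ldots,f_q)$ is a defining sequence (with associated constants $\gamma_j=u_j\alpha_j+v_j\beta_j$ in the global case); by additivity of mass on positive currents, $\|dd^c f_q\|=u_q\|dd^c\varphi_q\|+v_q\|dd^c\psi_q\|$ in the local case.

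The clinching step is the optimization. Setting $\lambda_j=\lambda$ for every $j$ telescopes to $u_q=(1+\lambda)^{2^q-1}$ and $v_q=(1+\lambda^{-1})^{2^q-1}$. Letting $\alpha$ and $\beta$ denote $\|dd^c\varphi_q\|^{1/2^q}$ and $\|dd^c\psi_q\|^{1/2^q}$ (locally), or $\alpha_q^{1/2^q}$ and $\beta_q^{1/2^q}$ (globally), the choice $\lambda=\beta/\alpha$ produces the identity
\[
u_q\alpha^{2^q}+v_q\beta^{2^q}=\Big(\tfrac{\alpha+\beta}{\alpha}\Big)^{2^q-1}\alpha^{2^q}+\Big(\tfrac{\alpha+\beta}{\beta}\Big)^{2^q-1}\beta^{2^q}=(\alpha+\beta)^{2^q},
\]
so $\|dd^c f_q\|^{1/2^q}\leq\alpha+\beta$ in the local case and $\gamma_q^{1/2^q}\leq\alpha+\beta$ in the global case. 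Combined with $\|f\|_{L^2}\leq\|\varphi\|_{L^2}+\|\psi\|_{L^2}$, and passage to the infima over defining sequences of $\varphi$ and $\psi$, this yields $\|f\|_{**,q}\leq\|\varphi\|_{**,q}+\|\psi\|_{**,q}$.

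The main obstacle is precisely this optimization. The quasinorm proof of Proposition~\ref{prop quasinorm local case} uses fixed coefficients $f_j=\tfrac{4^{2^{j-1}}}{2}(\varphi_j+\psi_j)$ and loses a factor $2^{1-1/2^q}>1$, which is harmless for a quasinorm but fatal for a genuine norm. Recovering constant $1$ requires introducing the free parameters $\lambda_j$ throughout the construction; the pleasant feature is that a single common value $\lambda=\beta/\alpha$ is simultaneously optimal at every level and makes the product telescope cleanly into $(\alpha+\beta)^{2^q}$.
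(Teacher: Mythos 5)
Your proof is correct and matches the paper's argument essentially verbatim: both take the Vigny-style defining sequence $f_j=(1+c)^{2^j-1}\varphi_j+(1+1/c)^{2^j-1}\psi_j$ (your $\lambda_j\equiv\lambda$ telescopes to the same coefficients $u_j=(1+\lambda)^{2^j-1}$), verify it by induction via the weighted Cauchy--Schwarz inequality, and then choose $c=(\|dd^c\psi_q\|/\|dd^c\varphi_q\|)^{1/2^q}$ so that $\|dd^cf_q\|^{1/2^q}\leq\|dd^c\varphi_q\|^{1/2^q}+\|dd^c\psi_q\|^{1/2^q}$. The only cosmetic difference is that you introduce independent parameters $\lambda_1,\dots,\lambda_q$ before specializing to a common value, whereas the paper fixes a single $c$ from the outset.
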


\begin{proof}
    We use the idea of \cite[Proposition 1]{Vigny}. We prove the local case only as the global case can be treated similarly (see the proof of Proposition~\ref{prop quasinorm local case}).

    The homogeneity is clear. We only need to check the triangle inequality. Let $\varphi$ and $\psi$ be functions in $W^*_q$ and $f:= \varphi+\psi$. Let $(\varphi_1,\ldots, \varphi_q)$ and $(\psi_1,\ldots,\psi_q)$ be defining sequences for $\varphi$ and $\psi$. Let $c$ be a positive number. We claim that $(f_1,\ldots,f_q)$ with
    \[ f_j:= (1+c)^{2^j-1}\varphi_j + (1+1/c)^{2^{j}-1} \psi_j,\]
    is a defining sequence for $f$.

    We prove this by induction on $q$. By Cauchy-Schwarz inequality, we have
    \begin{align*}df \wedge d^c f &\leq (1+c)d\varphi\wedge d^c  \varphi + (1+1/c) d\psi \wedge d^c \psi \\
&\leq  dd^c ((1+c)\varphi_1 + (1+1/c)\psi_1).
\end{align*} 
The claim follows for $q=1$. Assume that the claim is true for $q=1$ where $q\geq 2$. We have $ f_{q-1}:= (1+c)^{2^{q-1}-1}\varphi_{q-1} + (1+1/c)^{2^{q-1}-1} \psi_{q-1}$. Using Cauchy-Schwarz inequality again, we have
\begin{align*}df_{q-1} \wedge d^c f_{q-1} & \leq (1+c) (1+c)^{2^{q}-2} d\varphi_{q-1}\wedge d^c \varphi_{q-1} \\&  +(1+1/c)(1+1/c)^{2^{q}-2} d\psi_{q-1} \wedge d^c \psi_{q-1} \\
&\leq dd^c (  (1+c)^{2^q-1} \varphi_q + (1+1/c)^{2^q-1} \psi_q ).
\end{align*} 
The claim follows.

We now put $\|dd^c \varphi_q\| = \alpha_q, \|dd^c \psi_q\| = \beta_q$ and choose $c:= (\beta_q / \alpha_q)^{1/2^q}$. Then 
\begin{align*}
\|dd^c f_q\|&=
((1+c)^{2^q-1} \alpha_q + (1+1/c)^{2^q-1} \beta_q) \\  &\leq (\alpha_q^{1/2^q} + \beta_q^{1/2^q})^{2^q-1} \alpha_q^{1/2^q} + (\alpha_q^{1/2^q} + \beta_q^{1/2^q})^{2^q-1} \beta_q^{1/2^q} \\
&= (\alpha_q^{1/2^q} + \beta_q^{1/2^q})^{2^q}.
\end{align*}
Taking the power $2^{-q}$, we get $\|dd^c f_q\|^{1/2^q} \leq \|dd^c \varphi_q\|^{1/2^q} + \|dd^c \psi_q \|^{1/2^q}$ as desired.
\end{proof}

\begin{proposition}\label{prop equiv norm}
    Let $K$ be a relatively compact subset of $\Omega$. Then there exists a constant $C$ depending only on $K$ and $\Omega$ such that $C \|\varphi\|_{W^*_q(K)} \leq \|\varphi\|_{**,q} \leq \|\varphi\|_{*,q}$.
\end{proposition}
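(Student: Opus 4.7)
The right inequality $\|\varphi\|_{**,q}\leq\|\varphi\|_{*,q}$ is immediate from the definitions: for any defining sequence, $\|dd^c\varphi_q\|^{1/2^q}$ is one of the non-negative summands in $\sum_{j=1}^q\|dd^c\varphi_j\|^{1/2^j}$, so taking infima over defining sequences preserves the inequality. The substantive content is the left inequality, and the plan is to start from a defining sequence $(\varphi_1,\ldots,\varphi_q)$ on $\Omega$ approximately achieving the $\|\cdot\|_{**,q}$-infimum---so that only the terminal mass $\|dd^c\varphi_q\|_\Omega$ is directly controlled---and, after restricting to $K$, propagate that control to every intermediate mass.

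Fix once and for all a chain $K\Subset K_1\Subset\cdots\Subset K_{q-1}\Subset K_q=\Omega$ of relatively compact open subsets with smooth boundary. Each defining-sequence relation $d\varphi_{j-1}\wedge d^c\varphi_{j-1}\leq dd^c\varphi_j$ is invariant under adding a constant to $\varphi_j$, so we may normalize each $\varphi_j$ ($j\geq 1$) so that $\int_{K_j}\varphi_j=0$ without breaking the defining-sequence structure. The key is the per-level bound
\[\|dd^c\varphi_j\|_{K_{j-1}}\leq C_j\,\|dd^c\varphi_{j+1}\|_{K_j}^{1/2}\qquad(1\leq j\leq q-1),\]
obtained by chaining four standard facts: Chern--Levine--Nirenberg gives $\|dd^c\varphi_j\|_{K_{j-1}}\lesssim\|\varphi_j\|_{L^1(K_j)}$; H\"older's inequality gives $\|\varphi_j\|_{L^1(K_j)}\lesssim\|\varphi_j\|_{L^2(K_j)}$; Poincar\'e--Wirtinger on the Lipschitz domain $K_j$, combined with the normalization, gives $\|\varphi_j\|_{L^2(K_j)}\lesssim\|\nabla\varphi_j\|_{L^2(K_j)}$ (applicable because each psh $\varphi_j$ lies in $L^p_{\mathrm{loc}}$ for every finite $p$ and $\nabla\varphi_j\in L^2(K_j)$ by the defining inequality); finally, wedging that inequality with $(dd^c|z|^2)^{k-1}$ gives $\|\nabla\varphi_j\|_{L^2(K_j)}^2\lesssim\|dd^c\varphi_{j+1}\|_{K_j}$. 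Taking $2^j$-th roots and iterating from level $j$ up to $q-1$ (using $\sum_{\ell\geq 0}2^{-\ell}<\infty$ to absorb accumulated constants) telescopes to $\|dd^c\varphi_j\|_{K_{j-1}}^{1/2^j}\leq C\,\|dd^c\varphi_q\|_\Omega^{1/2^q}$ for every $j=1,\ldots,q-1$.

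Summing over $j$ (using positivity $\|dd^c\varphi_j\|_K\leq\|dd^c\varphi_j\|_{K_{j-1}}$ and the trivial $\|\varphi\|_{L^2(K)}\leq\|\varphi\|_{L^2(\Omega)}$) then gives $\|\varphi\|_{W^*_q(K)}\leq C'\bigl(\|\varphi\|_{L^2(\Omega)}+\|dd^c\varphi_q\|_\Omega^{1/2^q}\bigr)$; passing to the infimum on the right over defining sequences of $\varphi$ on $\Omega$ yields $\|\varphi\|_{W^*_q(K)}\leq C'\|\varphi\|_{**,q}$, which is the desired estimate. The only real subtlety is that the constant shifts normalizing the $\varphi_j$'s must be chosen independently per level, matched to the respective intermediate domain $K_j$; this is harmless because the shift of $\varphi_j$ enters only through $dd^c\varphi_j$ and $d\varphi_j\wedge d^c\varphi_j$, both invariant under it, so no level interferes with the defining relations of its neighbors.
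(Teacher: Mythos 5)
Your proposal is correct. The overall architecture is the same as the paper's---a per-level mass estimate of the form $\|dd^c\varphi_j\|_{\text{smaller}}\lesssim\|dd^c\varphi_{j+1}\|_{\text{larger}}^{1/2}$, then a telescoping over $2^{-j}$-th powers---but the route to that per-level estimate differs. The paper obtains it directly by one integration by parts plus Cauchy--Schwarz weighted by the cutoff:
\[
\int_K dd^c\varphi_j\wedge\omega^{k-1}\leq\Big|\int_\Omega d\chi\wedge d^c\varphi_j\wedge\omega^{k-1}\Big|\leq\Big(\int d\chi\wedge d^c\chi\wedge\omega^{k-1}\Big)^{1/2}\Big(\int d\varphi_j\wedge d^c\varphi_j\wedge\omega^{k-1}\Big)^{1/2}\leq C\,\|dd^c\varphi_{j+1}\|^{1/2},
\]
which goes straight from $\|dd^c\varphi_j\|$ to $\|\nabla\varphi_j\|_{L^2}$ in a single step without ever touching the $L^p$-norms of $\varphi_j$. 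You instead integrate by parts twice (Chern--Levine--Nirenberg pushes $dd^c$ onto $\chi$, landing on $\|\varphi_j\|_{L^1}$), pass to $L^2$ by H\"older, and then undo one of those integrations by parts via Poincar\'e--Wirtinger, which forces the extra normalization $\int_{K_j}\varphi_j=0$ and the smoothness-of-boundary hypothesis on the intermediate domains. Both roads arrive at the same inequality, but the paper's is shorter and carries less baggage (no normalization, no Poincar\'e constant, no boundary regularity of $K_j$). On the other hand you are more explicit than the paper about the nested chain $K\Subset K_1\Subset\cdots\Subset\Omega$ needed to make the telescoping literal---the paper's terse ``the result follows'' leaves that iteration implicit---so this part of your write-up is a genuine gain in completeness. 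In short: correct, same skeleton, more laborious per-level step, more careful bookkeeping of the iteration.
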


\begin{proof} The second inequality is clear. For the first inequality, let $\chi $ be a cut-off function with compact support in $\Omega$ such that $\chi \equiv 1$ on $K$. Let $\varphi \in W^*_q$ and $(\varphi_1,\ldots,\varphi_q)$ be a defining sequence that minimize \eqref{def local norm}. By Stokes' formula and Cauchy-Schwarz inequality, we have
\begin{align*} \int_K dd^c \varphi_j \wedge \omega^{k-1}  &\leq \int_\Omega \chi dd^c \varphi_j \wedge \omega^{k-1} = \Big|\int_\Omega d\chi \wedge d^c \varphi_j \wedge \omega^{k-1} \Big| \\
&\leq \Big(\int_\Omega d\chi \wedge d^c \chi \wedge \omega^{k-1}\Big)^{1/2} \cdot \Big(\int_\Omega d\varphi_j \wedge d^c \varphi_j \wedge \omega^{k-1}\Big)^{1/2}\\
&\leq C \Big(\int_\Omega d d^c \varphi_{j+1} \wedge \omega^{k-1}\Big)^{1/2},\end{align*}
for $1\leq j <q$. The result follows.
\end{proof}

\begin{remark}
It is natural to consider the quasinorm $\|\cdot\|_{*,q}$ since it takes information of all the elements in the defining sequence. Proposition~\ref{prop equiv norm} implies that if we only need to control the data in a compact subset, then we can construct a norm ($\|\cdot\|_{**,q}$) equivalent to the quasinorm $\|\cdot\|_{*,q}$. Moreover, Theorem~\ref{theoremA} is still true if we replace $\|\cdot\|_{*,q}$ by $\|\cdot\|_{**,q}$. However, it is not true in the global setting because one cannot bound $c_j$ by some constant times $c_{j+1}$ (this is a different point in the global setting). It is interesting to know if we can build a norm in $W^*_q$ equivalent to $\|\cdot\|_{*,q}$. When $q\geq 2$, since the mass of elements in the defining sequence can be chosen independently with each other (one can consider the $1$-dimensional case to see why), it seems to us that such a norm does not exist. 
\end{remark}

\begin{remark}
Note that when the domain $\Omega$ is nice (e.g., convex domain), we have $W^*_1 = W^*$. This restriction is from the fact that we cannot solve the equation $dd^c \varphi = T$ in every domain. We also note that the spaces $W^*_q$ is decreasing in $q$.
\end{remark}

By using induction on $q$ and the idea of \cite[Proposition 4]{Vigny}, it is easy to prove that $W^*_q$ has the following  compactness property.

\begin{proposition}\label{compactness}
    Let $(\varphi_n)$ be a bounded sequence in $W^*_q$. Then there exists a subsequence $(\varphi_{n_j})$ and a function $\varphi \in W^*_{q}$ such that $\varphi_{n_j}$ converges weakly to $\varphi$ in $W^{1,2}$ and $\|\varphi\|_{*,q}\leq \lim\limits_{j\rightarrow \infty} \|\varphi_{n_j}\|_{*,q}$.
\end{proposition}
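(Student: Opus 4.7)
The plan is to argue by induction on $q$, following the scheme of Vigny's Proposition~4 \cite{Vigny} (which handles $q=1$). For the base case, given $(\varphi_n)$ bounded in $W^*_1$ with defining psh/qpsh functions $\psi_n$ satisfying the defining inequality and $\|\ddc \psi_n\|$ (respectively the constants $c_n$) uniformly bounded, I would first use Banach--Alaoglu in $W^{1,2}$ together with Rellich to extract $\varphi_{n_j} \to \varphi$ weakly in $W^{1,2}$ and strongly in $L^2$. After a harmless additive normalization, classical compactness for psh/qpsh functions with uniformly bounded Monge--Amp\`ere mass yields a further subsequence with $\psi_{n_j} \to \psi$ in $L^1_{\local}$, with $\psi$ psh/qpsh, and $\ddc \psi_{n_j} \to \ddc \psi$ weakly as currents. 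The crucial verification is the inequality $d\varphi \wedge \dc \varphi \leq \ddc \psi$ (or its $\omega$-analog) at the limit: testing against any smooth strongly positive $(k-1,k-1)$-form $\alpha$, the functional $\eta \mapsto \int d\eta \wedge \dc \eta \wedge \alpha$ is a positive semi-definite quadratic form on $W^{1,2}$, hence convex and weakly lower semicontinuous, whence
$$\int d\varphi \wedge \dc \varphi \wedge \alpha \;\leq\; \liminf_j \int d\varphi_{n_j} \wedge \dc \varphi_{n_j} \wedge \alpha \;\leq\; \lim_j \int \ddc \psi_{n_j} \wedge \alpha \;=\; \int \ddc \psi \wedge \alpha,$$
and varying $\alpha$ gives the desired pointwise inequality of positive currents.

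For the inductive step, assume the result up to $q-1$ and fix near-minimizing defining sequences $(\varphi_{n,1},\ldots,\varphi_{n,q})$ of $\varphi_n$. The key observation is that $(\varphi_{n,2},\ldots,\varphi_{n,q})$ is itself a defining sequence for $\varphi_{n,1}$ in $W^*_{q-1}$, and that $\|\varphi_{n,1}\|_{*,q-1}$ is uniformly controlled after a suitable additive normalization of the psh/qpsh members. The induction hypothesis then produces a subsequence along which $\varphi_{n,1} \to \varphi_1$ weakly in $W^{1,2}$ with $\varphi_1 \in W^*_{q-1}$, and $\varphi_1$ is automatically psh/qpsh as an $L^1_{\local}$-limit of such. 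Simultaneously extracting a weak $W^{1,2}$-limit $\varphi$ of $\varphi_n$, the same convex-functional argument as in the base case yields $d\varphi \wedge \dc \varphi \leq \ddc \varphi_1$ (respectively its $\omega$-version), so $\varphi \in W^*_q$ with defining sequence $(\varphi_1,\ldots,\varphi_q)$ inherited from that of $\varphi_1$. The quasinorm estimate $\|\varphi\|_{*,q} \leq \liminf_j \|\varphi_{n_j}\|_{*,q}$ then follows term by term from lower semicontinuity of mass under weak convergence of positive currents together with strong $L^2$-convergence of $\varphi_{n_j}$.

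The main obstacle is the weak lower semicontinuity of $d\eta \wedge \dc \eta$ against positive test forms under weak $W^{1,2}$-convergence; this rests on convexity of the associated quadratic functional (equivalently, on Mazur's theorem combined with continuity) and is the step that genuinely exploits the Hilbert-space structure of $W^{1,2}$. A secondary technical point is the correct additive normalization of the psh/qpsh members of the defining sequences so as to guarantee $L^1_{\local}$-compactness — automatic in the global setting thanks to Poincar\'e--Wirtinger, but requiring attention in the local setting where one may have to work on relatively compact subsets and invoke Proposition~\ref{prop equiv norm}.
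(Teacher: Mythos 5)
Your proposal is correct and is exactly the route the paper intends: the paper itself declines to write out the argument, stating only that the result follows ``by using induction on $q$ and the idea of \cite[Proposition 4]{Vigny}.'' Your base case reproduces Vigny's weak-lower-semicontinuity argument for the quadratic functional $\eta\mapsto\int d\eta\wedge\dc\eta\wedge\alpha$, your inductive step correctly observes that $(\varphi_{n,2},\ldots,\varphi_{n,q})$ is a defining sequence for $\varphi_{n,1}$ in $W^*_{q-1}$, and the remaining issues you flag (additive normalization of the psh members to get $L^1_{\loc}$-compactness, lower semicontinuity of mass for the quasinorm estimate) are precisely the technical points one must handle.
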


    Now, we prove that with the quasinorm $\|\cdot\|_{*,q}$, $W^*_q$ is a quasi-Banach space.
\begin{proposition}\label{quasi Banach}
    $W^*_q$ endowed with the quasinorm $\|\cdot\|_{*,q}$  is a quasi-Banach space.
\end{proposition}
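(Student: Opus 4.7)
The plan is to show that every Cauchy sequence $(\varphi_n)$ in $(W^*_q, \|\cdot\|_{*,q})$ converges to some limit in $W^*_q$, using the compactness statement of Proposition~\ref{compactness} as the main input. First I would note that, since $\|\cdot\|_{*,q}$ is a quasinorm by Proposition~\ref{prop quasinorm local case}, a Cauchy sequence in $W^*_q$ is automatically bounded: iterating the quasi-triangle inequality gives $\|\varphi_n\|_{*,q} \leq C(\|\varphi_1\|_{*,q} + \sup_{m \geq 1}\|\varphi_m - \varphi_1\|_{*,q})$, which is finite. Applying Proposition~\ref{compactness} to this bounded sequence yields a subsequence $(\varphi_{n_j})$ and a candidate limit $\varphi \in W^*_q$ with $\varphi_{n_j} \to \varphi$ weakly in $W^{1,2}$.

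The next step is to upgrade convergence along the subsequence to convergence of the full Cauchy sequence in the quasinorm. Fix $\varepsilon>0$ and choose $N$ such that $\|\varphi_n - \varphi_m\|_{*,q} < \varepsilon$ for all $n,m\geq N$. For a fixed $n\geq N$, consider the sequence $(\varphi_{n_j} - \varphi_n)_{j : n_j \geq N}$, which is bounded by $\varepsilon$ in $W^*_q$ (again by the quasinorm property) and converges weakly in $W^{1,2}$ to $\varphi - \varphi_n$. Apply Proposition~\ref{compactness} to this difference sequence: after passing to a further subsequence it converges weakly in $W^{1,2}$ to some element $\widetilde\varphi \in W^*_q$ satisfying $\|\widetilde\varphi\|_{*,q} \leq \varepsilon$. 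By uniqueness of weak $W^{1,2}$ limits, $\widetilde\varphi = \varphi - \varphi_n$ almost everywhere, hence $\varphi - \varphi_n \in W^*_q$ with $\|\varphi - \varphi_n\|_{*,q} \leq \varepsilon$. Since this holds for every $n\geq N$, the full sequence $(\varphi_n)$ converges to $\varphi$ in the quasinorm.

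The same argument works verbatim in both the local and the global settings, since Propositions~\ref{prop quasinorm local case} and~\ref{compactness} are stated in both. The only subtle point is the step where we identify the weak $W^{1,2}$ limit of the difference sequence with an element of $W^*_q$; the main obstacle lies precisely in not having a direct lower semicontinuity of $\|\cdot\|_{*,q}$ with respect to weak $W^{1,2}$ convergence. We overcome this by invoking Proposition~\ref{compactness} twice (once for the original sequence and once for the difference), so that membership in $W^*_q$ and the quantitative bound on the norm both come from the same compactness result, with uniqueness of the weak limit bridging the two applications.
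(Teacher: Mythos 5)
Your proof is correct, and it follows a genuinely different route from the paper's. The paper proceeds by induction on $q$: for $q=1$ it cites Vigny's completeness result for $W^*$, and for $q\ge 2$ it uses the induction hypothesis (completeness of $W^*_{q-1}$) to obtain the candidate limit $\varphi\in W^*_{q-1}$, then applies Proposition~\ref{compactness} at level $q-1$ to the bounding functions $\psi_{n,m}$ in the definition of $\|\varphi_n-\varphi_m\|_{*,q}$, rather than to the differences themselves. Your argument is non-inductive: you apply Proposition~\ref{compactness} directly at level $q$ twice, once to the bounded Cauchy sequence to produce the candidate limit in $W^*_q$, and once to the difference sequence $(\varphi_{n_j}-\varphi_n)_j$ to obtain $\varphi-\varphi_n\in W^*_q$ with $\|\varphi-\varphi_n\|_{*,q}\le\varepsilon$, using uniqueness of weak $W^{1,2}$ limits to identify the two. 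Both approaches depend essentially on the compactness statement (which is itself proved by induction on $q$), so neither really avoids induction altogether; but your formulation is structurally cleaner, packages all the weak lower semicontinuity into one lemma, and handles $q=1$ on the same footing as $q\ge 2$ without a separate citation. The paper's inductive formulation has the minor advantage of making the defining-sequence structure of the limit explicit, but the conclusion is the same.
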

\begin{proof}
     We prove by induction on $q$. For $q=1$, this is a slight modification of \cite[Proposition 1]{Vigny}. 
    Suppose that $W^*_{q-1}$ is a quasi-Banach space endowed with the quasinorm $\|\cdot\|_{*,q-1}$ for $q\geq 2$.
    Let $(\varphi_n)$ be a Cauchy sequence in $W^*_q$, then it is also a Cauchy sequence in $W^*_{q-1}$. Thus it converges to a function $\varphi \in W^*_{q-1}$. For every $\varepsilon>0$, there exists $N$ such that for $m,n > N$, we have $d(\varphi_n - \varphi_m)\wedge d^c(\varphi_n-\varphi_m) \leq dd^c \psi_{n,m}$,
    where $\psi_{n,m}$ is a function with $\|\psi_{n,m}\|_{*,q-1} < \varepsilon$. 
    By Proposition~\ref{compactness}, we can find a subsequence of $(\psi_{n,m})_n$ which converges weakly in $W^{1,2}$ to a function $\psi_{m}$ such that $d(\varphi - \varphi_m)\wedge d^c(\varphi-\varphi_m)  \leq dd^c \psi_m$
    and $\|\psi_m \|_{*,q-1}\leq \varepsilon$. Thus, $\varphi \in W^*_q$ and $\varphi_n \to \varphi$ in $W^*_q$.
\end{proof}
\begin{remark}
    We can prove that $W^*_q$ endowed with the norm $\|\cdot\|_{**,q}$  is a Banach space by using similar arguments in the proof of Proposition \ref{quasi Banach}.
\end{remark}

\subsection{Examples} We now give specific examples for the functions in $W^*_q$.

\begin{example}
    Let $\varphi$ be a psh function in $\Omega$. Assume that $\varphi$ satisfies the following condition
    \begin{equation}\label{Donelly-Fefferman}
    d\varphi \wedge d^c \varphi \leq r dd^c \varphi\end{equation}
    for some $r>0$. Then it is clear by induction that $\varphi \in W^*_q(\Omega)$ for all $q$. Condition \eqref{Donelly-Fefferman} was introduced in \cite{donnellyFefferman-L2cohom-index-Bergman} and has many applications in studying the Bergman kernel of $\Omega$ (see, for example, \cite{berndtsson-charpentier-Sobolev-for-Bergman,blocki-suita}).
\end{example}

\begin{example}\label{example -log-varphi}
    Let $\varphi$ be an $\omega$-psh function in a compact K\"{a}hler manifold $(X,\omega)$. If $\varphi$ is bounded (assume that $0\leq \varphi \leq 1$), then we have
    \[d\varphi \wedge d^c \varphi = dd^c (\varphi^2)/2 - \varphi dd^c \varphi \leq \omega + dd^c(\varphi^2)/2,\]
    \[d(\varphi^2/2)\wedge d^c (\varphi^2/2) = \varphi^2 d\varphi \wedge d^c \varphi \leq d\varphi\wedge d^c\varphi \leq \omega + dd^c(\varphi^2/2).\]
    We can choose $((1,\varphi^2/2),\ldots,(1,\varphi^2/2))$ to be a defining sequence for $\varphi$. Hence $\varphi \in W^*_q(X)$ for all $q$ and we have the inclusion $\PSH(X,\omega) \cap L^\infty \subset \cap_q W^*_q(X)$.

     If $\varphi$ is unbounded (assume that $\varphi \leq -1$). Let $\psi = -\log(-\varphi)$. We have 
    \begin{equation*}
        d\psi \wedge d^c\psi = \frac{d\varphi \wedge d^c \varphi}{|\varphi|^2} \text{ and }
        dd^c \psi = -\frac{dd^c \varphi}{\varphi} + \frac{d\varphi \wedge d^c \varphi}{|\varphi|^2}\cdot
    \end{equation*}
    This implies that $d\psi \wedge d^c\psi = dd^c \psi + dd^c \varphi/\varphi \leq dd^c \psi + \omega$. We can choose $((1,\psi),\ldots,(1,\psi))$ to be a defining sequence for $\psi$. Therefore, $\psi \in W^*_q(X)$ for all $q$. See also \cite[Lemma 4.2]{DS_decay} and \cite[Example 1]{Vigny}.
\end{example}

\begin{example}\label{example max min}
    It has been observed in \cite[Section 2.3]{Vigny} (see also \cite[Proposition 4.1]{DS_decay}) that if $\varphi,\psi \in W^*_q$, then
    \begin{equation*}
        d\max(\varphi,\psi) \wedge d^c \max(\varphi,\psi) +
        d\min(\varphi,\psi) \wedge d^c \min(\varphi,\psi) \leq dd^c (\varphi_1+\psi_1),
    \end{equation*}
    where $\varphi_1,\psi_1$ are functions in $W^*_{q-1}$ such that $d\varphi \wedge d^c \varphi \leq dd^c \varphi_1$ and $d\psi\wedge d^c\psi \leq dd^c \psi_1$. So, both $\max(\varphi,\psi)$ and $\min(\varphi,\psi)$ belong to $W^*_q$ and 
    \[\|\max(\varphi,\psi)\|_{*,q} \leq 2\left(\|\varphi\|_{*,q} + \|\psi\|_{*,q}\right) \ \text{and} \ \|\min(\varphi,\psi)\|_{*,q} \leq 2\left(\|\varphi\|_{*,q} + \|\psi\|_{*,q}\right).\]
\end{example}

\begin{example}\label{mainexample} The following example is a higher version of \cite[Example 2]{Vigny}. Let $\varphi$ be the function defined by $-(-\log |z_1|^2)^\alpha$ in the unit ball $\mathbb{B}$ of $\mathbb{C}^k$. Then
    \[i\partial \varphi \wedge \bar{\partial} \varphi = \frac{idz_1\wedge d \bar{z_1}}{|z_1|^2(-\log |z_1|^2)^{2-2\alpha}}\cdot\]
    Let $\psi =-(-\log |z_1|^2)^{2\alpha}$. We have
    \[\bar{\partial} \psi = 2\alpha (-\log|z_1|^2 )^{2\alpha -1} \frac{1}{\bar{z_1}} d\bar{z_1},\qquad i\partial\bar{\partial} \psi = 2\alpha(1-2\alpha)\frac{i dz_1 \wedge d\bar{z_1}}{|z_1|^2 (-\log |z_1|^2)^{2-2\alpha}}\cdot\]
    This implies that
    \[i\partial \varphi \wedge \bar{\partial} \varphi = \frac{1}{2\alpha(1-2\alpha)}i\partial \bar{\partial} \psi\cdot\]
    By induction on $q$, we have $\varphi \in W^*_q(\B)$ if and only if $\alpha < 1/2^q$. This example will be used frequently to distinguish $W^*_q$ with different $q$ (see also Example~\ref{examMoser}).
\end{example}

\begin{example}\label{bmo for W_q} We consider the vanishing mean oscillation (VMO) property of functions in $W^*_q$. We highlight that this kind of property plays an important role in \cite{DNV-local-non-collapsing-bigclass}.

Let $u$ be a subharmonic function in $W^{1,2}(U)$ where $U$ is an open subset of $\mathbb{C}$. Then for any ball $B\subset U$, by Poincar\'{e}-Sobolev inequality, we have
\[\frac{1}{|B|} \int_B |u - m_B(u)| \leq c\Big(\int_B |\nabla u|^2\Big)^{1/2},\]
where $c$ is a constant not depending on $u$ and $B$. Letting the radius of $B$ goes to zero, we deduce that $\varphi$ is a VMO function. Thus, as pointed out in \cite[Theorem 1.1]{biard-Wu-equivalence-VMO-zeroLelong}, $u$ must have zero Lelong number at every point of $U$.

Let $\varphi $ be a psh function in $W^*_1(\Omega)$. By the slicing method (see \cite{DinhMarinescuVu}) and Siu's theorem (see \cite[Chapter. III (7.13)]{Demailly_ag}), $\varphi$ must have zero Lelong number at every point of $\Omega$. By the remark after \cite[Proposition 6]{Vigny}, we deduce that $W^*_q(\Omega)$ is a VMO space for every $q > 1$. The same result holds in the global setting. In fact, as we will see in Theorem~\ref{MAglobal}, if $\varphi \in W^*_1(X) \cap \PSH(X,\omega)$, then $\varphi \in \mathcal{E}(X,\omega)$. And it is well known that functions in $\mathcal{E}(X,\omega)$ have zero Lelong numbers at every point.
\end{example}

\subsection{Density theorems}
We now prove density theorems for $W^*_q$. The approximate sequences have been constructed in \cite{Vigny}, and we use the same construction for our spaces.

First, we consider the local case. We have the following result.
\begin{theorem}\label{localdense}

    Let $K$ be a relatively compact subset of $\Omega$ and $\varphi \in W^*_q(\Omega)$. Then there exists a sequence of smooth functions $(\varphi_n)$ converges to $\varphi$ in $W^{1,2}(K)$. Moreover, we have
$$\|\varphi\|_{W^*_q (K)} \leq \lim\limits_{n\rightarrow \infty} \|\varphi_n\|_{W^*_q (K)} \leq  \|\varphi\|_{W^*_q (\Omega)}.$$
\end{theorem}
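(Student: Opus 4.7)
The plan is to use convolution with a standard smoothing kernel. Fix a non-negative radial $\rho \in C_c^\infty(\B)$ with $\int \rho = 1$, set $\rho_\epsilon(z) = \epsilon^{-2k}\rho(z/\epsilon)$, and define $\varphi_n := \varphi * \rho_{1/n}$. For $n$ sufficiently large so that $1/n < \dist(K,\partial\Omega)$, each $\varphi_n$ is smooth on some open neighborhood $U$ of $K$ with $\bar U \Subset \Omega$. Standard theory gives $\varphi_n \to \varphi$ in $W^{1,2}(K)$, which takes care of the convergence claim.

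For the upper bound, given $\delta > 0$ I would select a defining sequence $(\varphi_1,\ldots,\varphi_q)$ for $\varphi$ on $\Omega$ that is $\delta$-close to the infimum in \eqref{def local quasinorm}, and set $\varphi_{j,n} := \varphi_j * \rho_{1/n}$. Because convolution with a non-negative kernel preserves plurisubharmonicity and commutes with $d$ and $d^c$, each $\varphi_{j,n}$ is smooth and psh on $U$. The key pointwise inequality is
$$d\varphi_{j-1,n}\wedge d^c\varphi_{j-1,n} \;\leq\; (d\varphi_{j-1}\wedge d^c\varphi_{j-1})*\rho_{1/n} \;\leq\; (dd^c\varphi_j)*\rho_{1/n} \;=\; dd^c\varphi_{j,n},$$
where the first step is a Jensen/Cauchy--Schwarz inequality for the positive Hermitian rank-one form $i\partial f\wedge\bar\partial f$ under the probability measure $\rho_{1/n}\,dV$, and the last equality is the commutation of $dd^c$ with convolution. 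Thus $(\varphi_{1,n},\ldots,\varphi_{q,n})$ is a defining sequence for $\varphi_n$ on $U$, and Young's inequality for convolutions gives, for $n$ large,
$$\int_K dd^c\varphi_{j,n}\wedge \omega^{k-1} \leq \int_\Omega dd^c\varphi_j\wedge \omega^{k-1}, \qquad \|\varphi_n\|_{L^2(K)}\leq \|\varphi\|_{L^2(\Omega)}.$$
Summing the $1/2^j$-powers and using the near-minimality of the chosen defining sequence yields $\|\varphi_n\|_{W^*_q(K)} \leq \|\varphi\|_{W^*_q(\Omega)} + \delta$; letting $\delta \to 0$ (along an appropriate diagonal extraction) gives the upper bound.

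For the lower bound, I would first note that $\varphi \in W^*_q(\Omega)$ clearly implies $\varphi\in W^*_q(K)$ by restriction of any defining sequence, so $\|\varphi\|_{W^*_q(K)}$ makes sense. Since the upper bound shows $(\|\varphi_n\|_{W^*_q(K)})$ is bounded, Proposition~\ref{compactness} applied on a slight open thickening of $K$ produces a subsequence $\varphi_{n_j}$ converging weakly in $W^{1,2}$ to some $\tilde\varphi\in W^*_q(K)$ with $\|\tilde\varphi\|_{W^*_q(K)} \leq \liminf_j \|\varphi_{n_j}\|_{W^*_q(K)}$. Uniqueness of $W^{1,2}$ limits forces $\tilde\varphi = \varphi$, giving $\|\varphi\|_{W^*_q(K)} \leq \liminf_n \|\varphi_n\|_{W^*_q(K)}$.

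The main technical obstacle is justifying the pointwise Jensen inequality $d(\varphi_j*\rho_\epsilon)\wedge d^c(\varphi_j*\rho_\epsilon) \leq (d\varphi_j\wedge d^c\varphi_j)*\rho_\epsilon$; once this is in hand, the defining-sequence structure is transported through the mollification by induction, and the two bounds reduce to routine mass and $L^2$ estimates for convolution. Everything else is standard.
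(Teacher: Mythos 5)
Your proposal is correct and follows essentially the same route as the paper's own proof: mollification, transport of the defining sequence through convolution via the Jensen/Cauchy--Schwarz inequality $d(\varphi_j*\rho_\epsilon)\wedge d^c(\varphi_j*\rho_\epsilon) \le (d\varphi_j\wedge d^c\varphi_j)*\rho_\epsilon$ (which is exactly what the paper invokes as Vigny's Lemma~5), the mass estimate over $K$, and Proposition~\ref{compactness} for the lower bound. The only cosmetic difference is that you work with a $\delta$-near-minimal defining sequence plus a diagonal extraction, whereas the paper notes earlier that the infimum in \eqref{def local quasinorm} is attained and simply takes the minimizer, removing the need for $\delta$.
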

\begin{proof}
    Take $\chi$ to be a non-negative smooth radial function with compact support in $\mathbb{C}^k$ such that $\int_{\mathbb{C}^k} \chi = 1$ and define $\chi_\varepsilon (z) = \varepsilon^{-2k} \chi (z/\varepsilon)$. Put $\varphi_\varepsilon = \varphi \ast \chi_\varepsilon$, then $\varphi_\varepsilon$ is well-defined in $K$ when $\varepsilon$ is small enough. Let $\psi \in W^*_{q-1}(\Omega) \cap \PSH(\Omega)$ such that $d \varphi \wedge d^c \varphi \leq dd^c \psi$. Let $(\varepsilon_n)_n$ be a sequence decreasing to zero. Define $\varphi_n  = \varphi_{\varepsilon_n}$, $\psi_n = \psi \ast \chi_{\varepsilon_n}$. By \cite[Lemma 5]{Vigny}, we have $d\varphi_n \wedge d^c \varphi_n \leq dd^c \psi_n$.

    Now, let $(\varphi_1,\ldots,\varphi_q)$ be a defining sequence for $\varphi$ and put $\varphi_{n,j}= \varphi_j \ast \chi_{\varepsilon_n}$ for $j=1,\ldots,q$. By induction, $(\varphi_{n,1},\ldots,\varphi_{n,q})$ is a defining sequence for $\varphi_n$. Moreover, we also have
    \begin{equation}\label{densitylocal}\lim_{n\rightarrow \infty} \int_{K} dd^c \varphi_{n,j} \wedge \omega^{k-1} \leq \int_{\Omega} dd^c \varphi_j \wedge \omega^{k-1} \text{ for } j= 1,\ldots,q.
    \end{equation}
    It follows that $(\varphi_n)$ is a bounded sequence in $W^*_q(K)$. Thus, by Proposition~\ref{compactness}, there exists a subsequence $(\varphi_{n_m})$ converges to $\varphi$ in $W^{1,2}(K)$ and satisfies the first inequality. By \eqref{densitylocal}, this sequence also satisfies the second inequality. The proof is complete.
\end{proof}

We now consider the case of compact K\"{a}hler manifolds. We have the following result.

\begin{theorem}
    Let $\varphi \in W^*_q(X)$. Then there exists a sequence of smooth functions $(\varphi_n)$ such that $\varphi_n$ converges to $\varphi$ weakly in $W^{1,2}(X)$. Moreover, there is a constant $c$ that does not depend on $\varphi$ such that $\displaystyle\lim\limits_{n\rightarrow \infty} \|\varphi_n\|_{*,q} \leq c\|\varphi\|_{*,q}$.
\end{theorem}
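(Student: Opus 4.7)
The plan is to argue by induction on $q$. The base case $q=1$ is the density statement for $W^*_1(X)$ proved by Vigny in \cite{Vigny}, so it suffices to treat the inductive step. Fix $\varphi \in W^*_q(X)$ and choose a defining sequence $((c_1,\varphi_1),\ldots,(c_q,\varphi_q))$ realizing the infimum in \eqref{def global norm} up to $\varepsilon$. The crucial point is to regularize $\varphi$ together with every $\varphi_j$ by \emph{one and the same} smoothing operator, so that the chain of inequalities $d\varphi_{j-1}\wedge d^c\varphi_{j-1}\leq c_j\omega + dd^c\varphi_j$ survives the regularization with only a small perturbation of the constants $c_j$.

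Concretely, I would use the Demailly-type mollifier built from the exponential map of a fixed Hermitian connection on $TX$: for small $t>0$, set
\[
u_t(z) := \int_{\zeta\in T_zX} u(\exp_z(\zeta))\,\chi_t(|\zeta|_h^2)\,dV_h(\zeta),
\]
where $\chi_t$ is a standard radial mollifier. For a qpsh $u$ with $dd^cu+c\omega\geq 0$, this produces smooth $u_t\to u$ (in $L^1$) with $dd^cu_t+(c+At)\omega\geq 0$ for a constant $A$ depending only on $(X,\omega,h)$. Setting $\varphi_{j,n}:=(\varphi_j)_{t_n}$ for a sequence $t_n\searrow 0$, the global analogue of \cite[Lemma~5]{Vigny} --- proved by differentiating under the integral and applying Cauchy--Schwarz fibrewise --- would give
\[
d\varphi_{j-1,n}\wedge d^c\varphi_{j-1,n}\leq (c_j+At_n)\omega + dd^c\varphi_{j,n},
\]
so that $((c_1+At_n,\varphi_{1,n}),\ldots,(c_q+At_n,\varphi_{q,n}))$ is a defining sequence for $\varphi_n:=\varphi_{0,n}$ and $\|\varphi_n\|_{*,q}\leq \|\varphi_n\|_{L^2}+\sum_{j=1}^q(c_j+At_n)^{1/2^j}$.

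Invoking Proposition~\ref{compactness}, a subsequence of $(\varphi_n)$ converges weakly in $W^{1,2}(X)$; the $L^1$ convergence $\varphi_n\to\varphi$ identifies the weak limit with $\varphi$, and the uniform control on the defining constants passes to the limit to give $\lim_n\|\varphi_n\|_{*,q}\leq c\|\varphi\|_{*,q}$ with $c$ depending only on $(X,\omega,h)$. The main obstacle is the second displayed inequality above: because the exponential mollifier is not a pure convolution, one must control the curvature terms arising from the non-flatness of the connection and show that they contribute only an $O(t)$ error uniformly in the given defining sequence. A conceptually cleaner alternative --- at the cost of extra bookkeeping --- is to replace the global mollifier by a partition-of-unity convolution in local charts, reducing everything to the local statement already established in Theorem~\ref{localdense} at the price of a multiplicative constant coming from the partition functions.
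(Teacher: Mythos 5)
Your proposal and the paper's proof go down genuinely different roads, and yours has an unresolved gap at exactly the step you flag as the ``main obstacle.'' The paper does \emph{not} mollify via an exponential map or local charts. It uses the Dinh--Sibony structural kernels from \cite{DS_regula}: positive closed $(k,k)$-currents $K_n^\pm$ on $X\times X$ with $K_n=K_n^+-K_n^-$ converging to the current of integration on the diagonal and with uniform $L^1$ bounds $\|K_n^\pm(\cdot,y)\|_{L^1}\leq A$. Setting $\varphi_n(x)=\int_y\varphi(y)K_n(x,y)$, the bound $d\varphi\wedge d^c\varphi\leq c(\omega+dd^c\psi)$ pushes through in one Cauchy--Schwarz step to give $d\varphi_n\wedge d^c\varphi_n\leq 2cA^2(\omega+dd^c\psi_n)$ where $\psi_n=\int_y\psi(y)K_n^\pm(x,y)$, and then one iterates the convolution and inducts on $q$. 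The whole point of this machinery is that it produces a ``convolution-like'' operator on $X$ for which the inequality chain transfers cleanly, with the positivity and closedness of $K_n^\pm$ doing the work that translation invariance does locally.

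The gap in your proposal is the assertion that the Demailly exponential-map mollifier satisfies a global analogue of \cite[Lemma~5]{Vigny}, i.e.\ that $d\varphi\wedge d^c\varphi\leq c\omega+dd^c\psi$ implies $d\varphi_t\wedge d^c\varphi_t\leq (c+At)\omega+dd^c\psi_t$. In the Euclidean case this is immediate because convolution commutes with $d$ and $d^c$, so Cauchy--Schwarz under the integral literally gives $d\varphi_t\wedge d^c\varphi_t\leq (d\varphi\wedge d^c\varphi)*\chi_t\leq (c\omega+dd^c\psi)*\chi_t = c\omega+dd^c\psi_t$. For the exponential mollifier, $d$ does \emph{not} commute with the smoothing: $d u_t$ picks up Jacobian and curvature terms from the variation of the exponential map in the base point, and Demailly's estimates (which give $dd^c u_t+(c+At)\omega\geq 0$) do not by themselves control the discrepancy between $d\varphi_t\wedge d^c\varphi_t$ and the $t$-smoothing of $d\varphi\wedge d^c\varphi$, nor show that the error is bounded by $O(t)\omega$ \emph{uniformly} over $\varphi$ with $\|\varphi\|_{*,q}\leq 1$. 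You correctly identify this as the obstacle, but asserting that it ``would'' follow by ``differentiating under the integral and applying Cauchy--Schwarz fibrewise'' is exactly what does not work without translation invariance; the step is missing, not merely unverified. The partition-of-unity fallback has a parallel difficulty: after convolving $\chi_i\varphi$ in charts and summing, the cross terms $d\chi_i\wedge d^c(\cdot)$ do not have a sign and cannot be absorbed into $\omega+dd^c(\text{something psh})$ without substantial additional argument, so calling this ``extra bookkeeping'' understates the issue. In short, you would need to either supply a proof of the gradient-level comparison for the Demailly mollifier, or adopt the paper's route through the Dinh--Sibony kernels, where the analogue of your key inequality comes for free from positivity and closedness of $K_n^\pm$.
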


\begin{proof} The proof is similar to \cite[Theorem 10]{Vigny} (see also \cite[Theorem 2.2]{DNV-local-non-collapsing-bigclass} for a stronger version) so we only sketch it and refer the reader to cited papers for more details. Recall that by \cite{DS_regula}, there exist two sequences $(K_n^+), (K_n^-)$ of positive closed $(k,k)$ currents in $X\times X$ such that $K_n:=K_n^+ - K_n^-$ converges to the current of integration on the diagonal of $X\times X$. These currents are smooth outside the diagonal and $\|K_n^{\pm}(\cdot,y)\|_{L^1} \leq A$ where $A$ is a constant that depends only on $(X,\omega)$.

Consider $\varphi \in W^*_q(X)$ and put
\[\varphi_n(x) = \int_{y\in X} \varphi(y) K_n(x,y).\]
Let $\psi$ be the function in $W^*_{q-1}(X)$ such that $d\varphi \wedge d^c \varphi \leq c(\omega +dd^c \psi)$. Then we can bound $d\varphi_n \wedge d^c \varphi_n$ by the positive closed current
\[cA\int_{y\in X}  (K_n^{+ }(x,y) + K_n^-(x,y)) \wedge (\omega+dd^c \psi).\]
We can bound this current by $2cA^2 (\omega + dd^c \psi_n)$, where $\psi_n := \int_{y \in X} \psi(y) K_n^{\pm}(x,y)$. After iterating this convolution several times, we can make $\varphi_n$ smooth. The result follows by induction on $q$.
\end{proof}


\subsection{Functional capacities}
Consider the case where $(X,\omega)$ is a compact K\"{a}hler manifold. Following \cite{Vigny}, for a Borel set $E$ in $X$, we define
\[L_q(E) = \left \{ \varphi \in W^*_q(X) : \varphi \leq -1 \ \text{a.e on some neighborhood of }E \text{ and } \varphi \leq 0 \ \text{on } X \right \}.\]
The corresponding functional capacity for $W^*_q$ can be defined as follows
\[\capa_q(E)=\inf \Big \{ \left \| \varphi \right \|_{*,q}^2 : \varphi \in L_q(E) \Big \}.\]
These capacities share similar properties with the functional capacity introduced by Vigny in \cite{Vigny}. We list below some important properties. The proofs are modifications of Vigny's proofs in \cite{Vigny}.

\begin{proposition}\label{propcap}
    The capacity $\capa_q$ satisfies the following properties
    \begin{itemize}
        
    \item[(1)] for $E\subset F \subset X$, $ \capa_q(E) \leq \capa_q (F)$;

    \item[(2)] if $(E_j)$ is a sequence of Borel sets in $X$, $\capa_q(\cup_j E_j) \leq  2\sum_j \capa_q(E_j)$;

    \item[(3)] $\capa_q(X) = 1$ and $\capa_q(E) \leq 1$ for any $E\subset X$;

    \item[(4)] if $(K_j)$ is a decreasing sequence of compact sets, $\displaystyle \lim\limits_{j\rightarrow \infty} \capa_q(K_j) = \capa_q(\cap_n K_j)$;

    \item[(5)] if $(E_j)$ is an increasing sequence of Borel sets, $\capa_q(\cup_j E_j) =\displaystyle  \lim\limits_{j\rightarrow \infty} \capa_q(E_j)$, that is, $\capa_q$ is a Choquet capacity.

    \end{itemize}
\end{proposition}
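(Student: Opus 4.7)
The plan is to follow the template of \cite[Proposition~7]{Vigny} adapted to the higher-$q$ setting. Items (1) and (3) are direct consequences of the definition. For (1), if $E\subset F$ then any $\varphi\in L_q(F)$ is automatically in $L_q(E)$, so $L_q(F)\subset L_q(E)$ and the infimum cannot increase. For (3), the constant function $-1$ belongs to $L_q(X)$ with defining sequence identically zero, giving $\|{-1}\|_{*,q}=1$ (after normalizing $\int_X\omega^k=1$), so $\capa_q(X)\leq 1$; conversely, any $\varphi\in L_q(X)$ satisfies $\varphi\leq -1$ a.e., so $\|\varphi\|_{L^2(X)}^2\geq 1$ and hence $\|\varphi\|_{*,q}^2\geq 1$. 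The bound $\capa_q(E)\leq 1$ for arbitrary $E\subset X$ then follows from (1).

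For countable subadditivity (2), the plan is to choose, for each $\varepsilon>0$, near-optimal competitors $\varphi_j\in L_q(E_j)$ with $\|\varphi_j\|_{*,q}^2\leq \capa_q(E_j)+\varepsilon\,2^{-j}$, and to form the pointwise infimum $\varphi:=\inf_j\varphi_j$ first via the finite truncations $\varphi^{(N)}:=\min_{j\leq N}\varphi_j$ and then by passing to the limit in $N$. Clearly $\varphi\leq 0$ on $X$ and $\varphi\leq -1$ a.e.\ on a neighborhood of $\bigcup_j E_j$. The key input is a refinement of Example~\ref{example max min}: writing $i(x)$ for the minimizing index at $x$, one has $d\varphi^{(N)}\wedge d^c\varphi^{(N)}=d\varphi_{i(x)}\wedge d^c\varphi_{i(x)}\leq \sum_{j\leq N}d\varphi_j\wedge d^c\varphi_j$ pointwise as positive $(1,1)$-forms. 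Inserting the defining sequences of the $\varphi_j$ and iterating this bound level by level, while applying the quasi-triangle inequality of Proposition~\ref{prop quasinorm local case} at each level, one obtains an estimate of the form $\|\varphi^{(N)}\|_{*,q}^2\leq 2\sum_{j\leq N}\|\varphi_j\|_{*,q}^2$. Passing to the limit $N\to\infty$ via Proposition~\ref{compactness} and then letting $\varepsilon\to 0$ yields (2). The main obstacle is to give rigorous meaning to the infinite infimum as an element of $W^*_q$ and to track the accumulation of constants through the defining sequences carefully enough to keep the final factor at $2$; here the compactness provided by Proposition~\ref{compactness} is essential to handle the limit $N\to\infty$.

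For (4) and (5), the plan is to rely on the compactness of bounded sequences in $W^*_q$ (Proposition~\ref{compactness}). In (4), monotonicity gives $\lim_j\capa_q(K_j)\geq \capa_q(K)$; for the reverse inequality, take near-optimal $\varphi_j\in L_q(K_j)$ and extract a $W^{1,2}$-weak, $L^2$-strong, and almost-everywhere convergent subsequence with limit $\varphi\in W^*_q$ satisfying $\|\varphi\|_{*,q}^2\leq \lim_j\|\varphi_j\|_{*,q}^2$. Membership $\varphi\in L_q(K)$ is verified by fixing any open $U\supset K$: since the $K_j$ are compact and decrease to $K$, $K_j\subset U$ eventually, so $\varphi_j\leq -1$ a.e.\ on $U$ from some $j$ onward, whence $\varphi\leq -1$ a.e.\ on $U$ in the limit. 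Item (5) is handled symmetrically: monotonicity gives $\lim_j\capa_q(E_j)\leq \capa_q(\bigcup_j E_j)$, and extracting a weakly convergent subsequence of near-optimal $\varphi_j\in L_q(E_j)$ produces a competitor for $\bigcup_j E_j$ after a mild enlargement of the neighborhoods, as in the standard outer-regularity argument for capacities. Together these steps establish (1)--(5) and show that $\capa_q$ is a Choquet capacity.
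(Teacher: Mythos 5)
Items (1) and (3) are handled correctly. However, there are genuine gaps in your treatment of (2) and (4).

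For (2), the heart of your argument is the claim that ``inserting the defining sequences and iterating level by level, while applying the quasi-triangle inequality at each level, one obtains $\|\varphi^{(N)}\|_{*,q}^2\leq 2\sum_{j\leq N}\|\varphi_j\|_{*,q}^2$.'' This does not follow. Your pointwise observation $d\varphi^{(N)}\wedge d^c\varphi^{(N)}\leq \sum_{j\leq N}d\varphi_j\wedge d^c\varphi_j$ is correct and, at level $0$, produces a defining current $\sum_{j\leq N}c_{j,1}\omega + dd^c\bigl(\sum_{j\leq N}\psi_{j,1}\bigr)$ with \emph{no cross terms}. But at level $1$ you must estimate $d\bigl(\sum_j\psi_{j,1}\bigr)\wedge d^c\bigl(\sum_j\psi_{j,1}\bigr)$, which is a genuine sum, not a minimum, and the minimizing-index trick no longer applies: cross terms appear. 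Applying the quasi-triangle inequality of Proposition~\ref{prop quasinorm local case} repeatedly to control $\|\sum_{j\leq N}\psi_{j,1}\|_{*,q-1}$ yields a constant of order $2^{N-1}$, not a constant independent of $N$; this is useless for passing to a countable union. For $q=1$ one does obtain the factor $2$ cleanly (from $(a+b)^2\leq 2(a^2+b^2)$ applied once), precisely because the norm only records $\|\varphi\|_{L^2}$ and $c_1$ and never needs to estimate the gradient of $\sum_j\psi_{j,1}$. For $q\geq 2$, how to propagate the estimate through levels $1,\dots,q-1$ with a uniform constant is exactly the point that needs an argument, and your sketch does not supply it; ``the main obstacle'' you flag is, in fact, unresolved.

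For (4), the direction of your argument is reversed. You extract a limit $\varphi$ of near-optimal $\varphi_j\in L_q(K_j)$ and argue $\varphi\in L_q(K)$ with $\|\varphi\|_{*,q}^2\leq\lim_j\|\varphi_j\|_{*,q}^2$; this gives $\capa_q(K)\leq\lim_j\capa_q(K_j)$, which is already the consequence of monotonicity (1) and is not what needs proof. What is needed is $\lim_j\capa_q(K_j)\leq\capa_q(K)$, and for this one should take a \emph{single} near-optimal $\varphi\in L_q(K)$ with $\varphi\leq -1$ a.e.\ on an open $U\supset K$, then invoke compactness: since the $K_j$ decrease to $K\subset U$, one has $K_j\subset U$ for all large $j$, hence $\varphi\in L_q(K_j)$ and $\capa_q(K_j)\leq\|\varphi\|_{*,q}^2$ eventually. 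Separately, the membership check in your write-up is incorrect: $\varphi_j\in L_q(K_j)$ only guarantees $\varphi_j\leq-1$ a.e.\ on \emph{some} neighborhood $V_j$ of $K_j$, which may be much smaller than your fixed open set $U\supset K$; ``$K_j\subset U$ eventually, so $\varphi_j\leq -1$ a.e.\ on $U$'' does not follow. The same concern (shrinking neighborhoods) afflicts the brief sketch for (5). By contrast, the paper's proof simply isolates the single essential estimate $\|\min(\varphi,\psi)\|_{*,q}^2\leq 8\bigl(\|\varphi\|_{*,q}^2+\|\psi\|_{*,q}^2\bigr)$ from Example~\ref{example max min} together with $\|\mathbf{1}_X\|_{*,q}=1$, and then appeals directly to the proofs of \cite[Proposition~27 and Theorem~30]{Vigny}, which already organize the countable-union and continuity arguments correctly.
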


\begin{proof}
     We note that, due to Example~\ref{example max min}, in $W^*_q$, we have $$\left \| \min(\varphi,\psi) \right \|_{*,q}^2 \leq 8\Big(\left \| \varphi \right \|_{*,q}^2 + \left \| \psi \right \|_{*,q}^2\Big).$$ Moreover, $\|\mathbf{1}_X\|_{*,q} = 1$. We can now follow the proofs of \cite[Proposition 27 and Theorem 30]{Vigny} to prove this proposition.
\end{proof}

We now show that $\capa_q$ also characterizes pluripolar sets.

\begin{theorem}\label{equivcap}
    There exists a constant $B > 0$ such that for all Borel subset $E$ of $X$, we have 
    $$B^{-1}\capa_{\omega}(E)\leq \capa_q(E) \leq B(\capa_\omega(E))^{\frac{1}{k2^{q-1}}}.$$ In particular, $\capa_q(E) = 0$ if and only if $E$ is pluripolar.
\end{theorem}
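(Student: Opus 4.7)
The proof splits into the two inequalities; the lower bound will reduce to the $q=1$ case proved by Vigny, while the upper bound requires iteratively extending Vigny's $q=1$ construction from length one to length $q$.

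\smallskip

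\noindent\textit{Lower bound.} The key observation is that the spaces are nested and the quasinorms are comparable: any defining sequence $((c_1,\varphi_1),\ldots,(c_q,\varphi_q))$ of $\varphi$ in $W^*_q(X)$ restricts to the length-one defining sequence $(c_1,\varphi_1)$ in $W^*_1(X)$, and dropping the non-negative terms $c_j^{1/2^j}$ for $j\geq 2$ can only decrease the sum in the definition of the quasinorm. Hence $\|\varphi\|_{*,1}\leq \|\varphi\|_{*,q}$ for every $\varphi\in W^*_q(X)$, which yields $L_q(E)\subset L_1(E)$ and consequently $\capa_1(E)\leq \capa_q(E)$. Combining with Vigny's inequality $\capa_\omega(E)\leq B\,\capa_1(E)$ from \cite[Theorem 30]{Vigny} gives $\capa_\omega(E)\leq B\,\capa_q(E)$.

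\smallskip

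\noindent\textit{Upper bound.} I start from Vigny's competitor $\varphi\in L_1(E)$ realizing $\capa_1(E)\leq B\,\capa_\omega(E)^{1/k}$ (see \cite[Theorem 30]{Vigny}), together with its associated bounded qpsh layer $\varphi_1$ (built from the $\omega$-Monge--Amp\`ere extremal function of $E$) satisfying $d\varphi\wedge d^c\varphi\leq c_1\omega+dd^c\varphi_1$ with $c_1\leq B\,\capa_\omega(E)^{1/k}$. I then extend this length-one defining sequence to length $q$ by iteratively applying the identity
\[
d\varphi_j\wedge d^c\varphi_j=\tfrac{1}{2}dd^c(\varphi_j^2)-\varphi_j\,dd^c\varphi_j.
\]
If $\varphi_j$ is bounded qpsh with $|\varphi_j|\leq M_j$ and qpsh constant $C_j$, then the right-hand side is bounded by $c_{j+1}\omega+dd^c\varphi_{j+1}$ where $\varphi_{j+1}=\tfrac{1}{2}\varphi_j^2+M_j\varphi_j$ is still bounded qpsh and $c_{j+1}$ is controlled by $M_jC_j$. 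Iterating for $j=1,\ldots,q-1$ produces a defining sequence of length $q$, placing $\varphi\in L_q(E)$.

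\smallskip

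\noindent\textit{Bookkeeping and main obstacle.} The delicate step is the precise bookkeeping that delivers the exponent $1/(k2^{q-1})$. A direct check shows that the iteration produces $M_{j+1}\sim M_j^2$ and $c_{j+1}\sim M_jC_j$; the $2^{-j}$ power appearing at level $j$ in the definition of the quasinorm exactly halves the exponent at each step, so that starting from $c_1\leq B\,\capa_\omega(E)^{1/k}$ together with classical estimates on $M_1$ and $C_1$ coming from the Bedford--Taylor extremal function, a telescoping geometric-series argument yields $\|\varphi\|_{*,q}^2\leq B\,\capa_\omega(E)^{1/(k2^{q-1})}$. The hardest point is verifying that the quadratic blow-up $M_{j+1}\sim M_j^2$ is absorbed by the $2^{-j}$ root taken in the quasinorm; this is exactly the numerology behind the exponent halving. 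Finally, $\capa_q(E)=0\Leftrightarrow E$ is pluripolar follows immediately from the two inequalities together with the classical characterization of pluripolar sets as those of vanishing Bedford--Taylor capacity $\capa_\omega$.
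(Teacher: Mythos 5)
Your lower bound argument is correct and in fact cleaner than the paper's: the observation that truncating a defining sequence for $W^*_q$ to its first term gives a defining sequence for $W^*_1$, so that $\|\varphi\|_{*,1}\leq\|\varphi\|_{*,q}$ and hence $L_q(E)\subset L_1(E)$ and $\capa_1(E)\leq\capa_q(E)$, reduces the first inequality directly to Vigny's $q=1$ case. The paper instead re-runs Vigny's argument, using the fact (from Example~\ref{example -log-varphi}) that for any qpsh $\varphi<-1$ the function $-\log(-\varphi)$ lies in every $W^*_q$ with the same polar set; both approaches are fine, but yours is more economical.

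For the upper bound there is a genuine gap. You extend a length-one defining sequence by the recursion $\varphi_{j+1}=\tfrac12\varphi_j^2+M_j\varphi_j$, observe the quadratic blow-up $M_{j+1}\sim M_j^2$, and then defer ``the hardest point'' of checking that the $2^{-j}$ roots tame it. That deferred point is the entire content of the inequality, and you do not resolve it. The paper's proof avoids the recursion and the blow-up altogether by making the crucial normalization explicit and then producing a \emph{stationary} defining sequence. Concretely, the paper takes $f_K=(V_K-M)/M$ with $M=\sup_X V_K\geq1$, so that $-1\leq f_K\leq 0$ and $dd^c f_K\geq -M^{-1}\omega$. The normalization $|f_K|\leq 1$ forces $f_K^2\leq |f_K|$, whence $d(f_K^2/2)\wedge d^c(f_K^2/2)\leq df_K\wedge d^c f_K\leq M^{-1}\omega+dd^c(f_K^2/2)$ — i.e.\ the \emph{same} pair $(c_j,\varphi_j)=(M^{-1},f_K^2/2)$ works at \emph{every} level $j=1,\dots,q$. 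There is no growth of $M_j$ or $c_j$ whatsoever; the bookkeeping collapses to
\[
\|f_K\|_{*,q}\;\lesssim\;\|f_K\|_{L^1}+\sum_{j=1}^q M^{-1/2^j}\;\lesssim\;M^{-1/2^q},
\]
and then Lemma~\ref{compareBTA} converts $M\gtrsim \capa_\omega(K)^{-1/k}$ into the exponent $1/(k2^{q-1})$ on $\capa_q=\|f_K\|^2_{*,q}$. Your iterative construction very plausibly gives the same exponent in the end (the doubly-exponential growth of $M_j$ is indeed exactly absorbed by the $2^{-j}$ roots when you start from $M_1\leq 1$), but as written it is not carried out, and it hides the fact that the normalization making everything work is the choice $-1\leq f_K\leq 0$; without saying that $M_1\leq 1$, your recursion could fail.
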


 Recall that the capacity $\capa_{\omega}$ was defined by Ko\l odziej in \cite{Kolodziej_2003}. It is related to the well-known Bedford-Taylor capacity (\cite{Bedford_Taylor_82}), and is defined by
\[\capa_\omega(E) = \sup \Big \{ \int_E (\omega +dd^c u)^k :\ u \in \PSH(X,\omega),\ -1\leq u\leq 0 \Big \},\]
where $E$ is a Borel subset of $X$. We refer the reader to \cite{GZ} for more information on this capacity for local and global settings.

 We also need the following notion of capacity introduced by Dinh-Sibony in \cite{DS_tm}. It is related to the capacities of Alexander in \cite{Alexander} and of Siciak in \cite{Siciak-extremal}, see also \cite{Harvey_Lawson} and \cite{GZ}. For a Borel subset $E$ of $X$, we consider the function
\[V_{E}(x)= \sup \Big\{ u(x): u\in \PSH(X,\omega) \ \text{and} \
u \leq 0 \ \text{on} \ E \Big\}.\]
Then $V_{E}$ is a non-negative $\omega$-psh function. Define
\[\mathcal{J} (E)=\exp \Big ( -\sup_X V_{E}(x) \Big ).\]

Recall from \cite[Proposition 6.1]{GZ} the following relation between $\capa_\omega$ and $\mathcal{J}$.

\begin{lemma}\label{compareBTA}
    There is a constant $A>0$ such that for all compact subsets $K$ of $X$, we have
    \[\exp\Big ( -\frac{A}{\capa_\omega (K)} \Big )\leq \mathcal{J} (K)\leq e\cdot \exp\Big ( -\frac{1}{\capa_\omega (K)^{1/k}} \Big )\cdot\]
\end{lemma}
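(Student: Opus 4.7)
My plan: Since this lemma is stated as a recall of \cite[Proposition~6.1]{GZ}, I would present the two-sided inequality as a pair of separate estimates, exploiting the interplay between the global $\omega$-extremal function $V_K^*$ and the Bedford-Taylor-Ko\l odziej capacity $\capa_\omega$. Throughout I normalize $\int_X\omega^k = 1$.

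For the upper bound $\mathcal{J}(K)\leq e\cdot\exp(-\capa_\omega(K)^{-1/k})$: set $M := \sup_X V_K^*$ and first suppose $M\geq 1$. Then $u := V_K^*/M - 1$ is $\omega$-psh (since $\omega + dd^c u = (1-M^{-1})\omega + M^{-1}(\omega + dd^c V_K^*)\geq 0$) and satisfies $-1\leq u\leq 0$, so it is admissible in the definition of $\capa_\omega(K)$. Expanding by multilinearity and discarding all but the highest-order term,
\[
(\omega + dd^c u)^k \;=\; \sum_{j=0}^{k}\binom{k}{j}(1-M^{-1})^{k-j}M^{-j}\,\omega^{k-j}\wedge(\omega+dd^c V_K^*)^j \;\geq\; M^{-k}(\omega + dd^c V_K^*)^k.
\]
The global equilibrium measure $(\omega + dd^c V_K^*)^k$ is concentrated on $K$ (modulo a pluripolar set) with total mass $1$, so integrating over $K$ yields $\capa_\omega(K)\geq M^{-k}$, whence $M\geq \capa_\omega(K)^{-1/k}$ and $\mathcal{J}(K)=e^{-M}\leq e^{-\capa_\omega(K)^{-1/k}}$. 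When $M<1$ the same argument applied to the admissible test function $V_K^* - 1$ forces $\capa_\omega(K)=1$, so $\mathcal{J}(K)\leq 1 \leq e\cdot e^{-1}$, absorbed by the prefactor $e$.

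For the lower bound $\mathcal{J}(K)\geq \exp(-A/\capa_\omega(K))$, the strategy is to use the comparison principle to convert mass estimates on Monge-Amp\`ere measures into size estimates on $V_K^*$. For any admissible $v\in\PSH(X,\omega)$ with $-1\leq v\leq 0$ and any $s\in (0,1)$, the set $\{v < -s\}$ carries a controlled fraction of the Monge-Amp\`ere mass $\int_X(\omega + dd^c v)^k = 1$, and one compares $v$ with the rescaled extremal function $V_K^*/M$ on such sublevel sets to bound $s$ in terms of $M$. Optimizing over $v$ yields a dimensional bound $\capa_\omega(K)\cdot M\leq A$, equivalently $\mathcal{J}(K) = e^{-M}\geq e^{-A/\capa_\omega(K)}$.

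I expect the main obstacle to be the lower bound: the upper bound is a one-line multilinearity expansion, but the lower bound requires the full comparison principle on $(X,\omega)$ together with the non-charging property of $(\omega + dd^c V_K^*)^k$ on pluripolar sets. Since the whole statement is already in \cite{GZ}, in the paper itself this lemma should simply be cited; the above outline describes how one would verify it directly in the present notation.
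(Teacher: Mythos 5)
The paper gives no proof of this lemma at all: it is recalled verbatim from \cite[Proposition 6.1]{GZ}, exactly as you anticipate in your final sentence, so the only thing to assess is your verification sketch. Your upper bound is correct and is the standard argument: for $M=\sup_X V_K\geq 1$ the function $u=V_K/M-1$ is admissible for $\capa_\omega(K)$, the multilinear expansion gives $(\omega+dd^cu)^k\geq M^{-k}(\omega+dd^cV_K)^k$, and since the equilibrium measure is concentrated on $K$ with full mass one gets $M\geq \capa_\omega(K)^{-1/k}$, the case $M<1$ being absorbed by the prefactor $e$.

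Your lower bound, however, misidentifies the mechanism. The relevant sublevel sets are those of the extremal function, not of the test function $v$: one sets $\phi=V_K-M$, which is $\omega$-psh with $\sup_X\phi=0$, notes that $V_K=0$ on $K$ off a pluripolar set so that $K\subset\{\phi\leq -M\}$ quasi-everywhere, and then invokes the uniform estimate $\capa_\omega(\{\phi<-s\})\leq A/s$ valid for all normalized $\omega$-psh $\phi$. Equivalently, $M\int_K(\omega+dd^cv)^k\leq\int_X(-\phi)(\omega+dd^cv)^k\leq A$ for every admissible $v$, where the last inequality is a Chern--Levine--Nirenberg-type bound proved by integration by parts together with the $L^1$-compactness of $\{\phi\in\PSH(X,\omega):\sup_X\phi=0\}$. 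That uniform integrability estimate is the genuine content of the lower bound, and your outline --- comparing $v$ with $V_K/M$ on $\{v<-s\}$ and ``optimizing over $v$'' --- does not supply it and would not, as stated, produce the inequality $M\capa_\omega(K)\leq A$. Since the lemma is only cited in the paper, this is a gap in your sketch rather than a divergence from the paper's (nonexistent) argument, but it is worth recording that the comparison principle is not what drives the lower bound.
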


Now we can prove Theorem~\ref{equivcap}.

\begin{proof}[Proof of Theorem~\ref{equivcap}]
 Due to Example~\ref{example -log-varphi}, if $\varphi$ is a qpsh function such that $\varphi < -1$, then the function $\psi = -\log (-\varphi)$ belongs to $W^*_q$ for all $q$, and $\psi$ has the same poles set as $\varphi$. We can now follow the proofs of \cite[Proposition 28]{Vigny} to see the first inequality.

We now consider the second inequality. We closely follow the proof of \cite[Proposition 5.1]{dinh2022complex}. We only need to show this inequality for compact regular sets $K \subset X$ such that $1 \leq M < \infty$ where $M:= \sup_X V_K(x)$.

Define $f_K(x)= (V_{K}(x) - M)/M$. Then $f_K$ is equal $-1$ on $K$ with $-1\leq f_K \leq 0$ and $f_K$ is qpsh with $dd^c f_K + M^{-1}\omega \geq 0$. Since $f_K$ is a bounded qpsh function, it follows from Example~\ref{example -log-varphi} that $f_K\in W^*_q$ for all $q$.

We now compute $\|f_K\|_{*,q}$. 
Direct calculation gives us 
\begin{align*}
df_K\wedge d^c f_K &= -f_K dd^c f_K + \frac{1}{2} dd^c(f_K^2) 
\leq  \frac{\omega}{M}+\frac{1}{2} dd^c(f_K^2),
\end{align*}
\begin{align*} d(f_K^2/2)\wedge d^c (f_K^2/2) &= (f_K^2) df_K\wedge d^c f_K
\leq df_K\wedge d^c f_K 
\leq  \frac{\omega}{M}+\frac{1}{2} dd^c(f_K^2)\cdot
\end{align*}
Hence, we can choose $((M^{-1},f_K^2/2),\ldots,(M^{-1},f_K^2/2))$ as a defining sequence for $f_K$. Moreover, since $\max_X f_K = 0$, we have
\[\|f_K\|_{L^1} = \int_X -f_K \omega^n \leq \frac{A}{M}\]
for some constant $A$ depending only on $(X,\omega)$ because the set of $\omega$-psh function $u$ such that $\max_X u = 0$ is a compact subset in $L^1(X)$. Thus, we get
\begin{align*}\|f_K\|_{*,q} \leq \|f_K\|_{L^1} + \sum_{j=1}^q \frac{1}{M^{2^j}} \leq \frac{A}{M} + \sum_{j=1}^q \frac{1}{M^{1/2^j}}\leq \frac{B'}{M^{1/2^q}}
\end{align*}
for some constant $B'>0$ since $M \geq 1$.

So, by Lemma~\ref{compareBTA},
\[\capa_q(U) \leq \|f_K\|_{*,q}^2 \leq \frac{B'^2}{M^{1/2^{q-1}}} \leq B \left ( \capa_\omega(U) \right )^{\frac{1}{k2^{q-1}}}\cdot\]
The proof is complete.
\end{proof}

\begin{remark}
    Theorem~\ref{equivcap} directly shows that the capacities $\capa_q$ with $p\geq 1$, are equivalent capacities. Observe that the sequence $(\capa_q)_{q\geq1}$ is increasing and always bounded above by $1$. Given a Borel set $E$, it is an interesting question to study the behavior of $\capa_q(E)$ as $q$ goes to infinity and their relationship with the Hausdorff measures. 
\end{remark}

\begin{remark}
As in \cite[Remark 33]{Vigny}, one can define $\capa_q$ in the local case by the same method. It is also a Choquet capacity and characterizes pluripolar sets. 
\end{remark}

\section{Moser-Trudinger inequalities}\label{sec:3}

In this section, we will follow the strategy in \cite[Section 2]{Vigny-Vu-Lebesgue} to construct psh bound for functions in $W^*_q(\B)$ and prove Theorems \ref{theoremA}. The key point is that the additional information in the defining sequence will lead to stronger estimates which help us to improve the exponent in the Moser-Trudinger inequalities.

\subsection{Set up}

As pointed out in Example~\ref{bmo for W_q}, $W^*_q(\B)$ is a BMO space for every $q$. Therefore, by \cite{JN61-bmo}, for compact subset $K$ of $\B$, there are constants $c$ and $A$ such that $\int_K e^{c|\varphi|} \omega^k \leq A$ for all $\varphi$ in $W^*_q(X)$ with $\|\varphi\|_{*,q}\leq 1$ (this can also be seen as a consequence of \cite[Theorem 1.1]{DinhMarinescuVu}). We will use the following consequence of this fact.

\begin{lemma}\label{Cor of M-T DMV}
    Let $K$ be a compact subset of $\B$ and $m\in\N$. Then there exists a constant $c_1 > 0$ such that, for every $\varphi\in W^*_q(\B)$ with $\|\varphi\|_{*,q}\leq 1$,
    $$\int_K|\varphi|^m\omega^k\leq c_1, \text{ where } \omega = dd^c |z|^2.$$ 
\end{lemma}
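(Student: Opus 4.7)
The plan is to deduce this lemma directly from the exponential integrability statement mentioned just before it. Namely, since $W^*_q(\B)$ is a BMO space (as observed in Example~\ref{bmo for W_q}), the John-Nirenberg inequality from \cite{JN61-bmo} provides, for any fixed compact $K\subset\B$, constants $c>0$ and $A>0$ such that
\[
\int_K e^{c|\varphi|}\omega^k \leq A
\qquad\text{for every } \varphi\in W^*_q(\B) \text{ with } \|\varphi\|_{*,q}\leq 1.
\]
(Alternatively, one may invoke \cite[Theorem~1.1]{DinhMarinescuVu} applied to the defining sequence associated to $\varphi$.) The constants $c,A$ depend on $K$ (and, through the BMO seminorm constants, on $q$) but not on $\varphi$.

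Once this exponential estimate is in hand, the polynomial bound is immediate from the elementary pointwise inequality
\[
t^m \leq \frac{m!}{c^m}\, e^{c t} \qquad \text{for all } t\geq 0,
\]
which follows from the term-by-term bound in the Taylor expansion of $e^{ct}$. Applying this with $t=|\varphi(z)|$ and integrating over $K$ yields
\[
\int_K |\varphi|^m\,\omega^k \;\leq\; \frac{m!}{c^m} \int_K e^{c|\varphi|}\,\omega^k \;\leq\; \frac{m!\,A}{c^m},
\]
so that the conclusion holds with $c_1 := m!\,A/c^m$, which depends only on $K$, $m$ and $q$, but not on $\varphi$.

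There is no real obstacle here: the only subtlety is to confirm that the exponential integrability is available uniformly over the unit ball of $(W^*_q(\B),\|\cdot\|_{*,q})$, which follows either from the BMO property (together with Proposition~\ref{prop equiv norm} to pass between $\|\cdot\|_{*,q}$ and controls on a relatively compact subset) or directly from the $q=1$ case via the inclusion $W^*_q(\B)\subset W^*_1(\B)$ with control of the norms. Since this lemma will be used only as a preparatory tool in the constructions of Section~\ref{sec:3}, this short derivation is all that is needed.
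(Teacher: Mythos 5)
Your proof is correct and follows exactly the argument the paper has in mind: the paper states this lemma without an explicit proof, but the sentence immediately preceding it supplies the exponential bound $\int_K e^{c|\varphi|}\omega^k\leq A$ (via the BMO property from Example~\ref{bmo for W_q} and the John--Nirenberg inequality, or via \cite[Theorem 1.1]{DinhMarinescuVu}), and the lemma is then the ``consequence of this fact'' obtained by the elementary bound $t^m\leq (m!/c^m)e^{ct}$, exactly as you write. Your closing remark about uniformity over the $\|\cdot\|_{*,q}$-unit ball is also handled correctly: since any defining sequence $(\varphi_1,\ldots,\varphi_q)$ of $\varphi$ has $\varphi_1$ as a defining function in $W^*_1$, one has $\|\varphi\|_{*,1}\leq\|\varphi\|_{*,q}$, so the exponential estimate for $W^*_1$ applies uniformly on the $W^*_q$-unit ball.
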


Let $\varphi$ be a positive function in $ W^*_q(\B)$ with $\|\varphi\|_{*,q} \leq 1$ and $(\varphi_1,\ldots,\varphi_q)$ be a defining sequence for $\varphi$ such that
\[\|\varphi\|_{*,q} = \|\varphi\|_{L^2} + \sum_{j=1}^q \|dd^c \varphi_j\|^{1/2^j}.\]

\begin{remark}
        Recall that if $(\varphi_1,\ldots,\varphi_q)$ is a defining sequence for $\varphi$, then $(\varphi_{1,\varepsilon},\ldots,\varphi_{q,\varepsilon})$ is a defining sequence for $\varphi_\varepsilon$ where $\varphi_\varepsilon$ is the standard regularization of $\varphi$ and $\varphi_{j,\varepsilon}$ is the standard regularization of $\varphi_j$ for $j=1,\ldots,q$ (see Theorem~\ref{localdense}).
\end{remark}

We now provide some constructions similar to \cite{Vigny-Vu-Lebesgue}. The \textit{different point} is that we use data of the last term $\varphi_q$ in the defining sequence. Suppose that $\varphi_q \leq 0$. For $n>0$, we define $\phi_n=\max(\varphi_q,-n)$, $h_n=1+\phi_n/n$, $T_n=\ddc \left(h_n^2/2\right)$. Then $h_n \in\psh(\B)$, $0\leq h_n \leq 1$, and $h_n=0$ on $\{\varphi_q\leq-n\}$. Moreover, $T_n$ is a positive closed $(1,1)$-current that vanishes on the open set $\{\varphi_q<-n\}$.

We have the following lemma similar to \cite[Lemma 2.2]{Vigny-Vu-Lebesgue}.
\begin{lemma}\label{estimates}
    Assume that $\varphi_{q-1}$ and $\varphi_q$ are smooth. Then the following inequalities holds:
    \begin{itemize}
        \item[(1)] $dh_n\wedge\dc h_n\leq T_n$ and $h_n \ddc h_n\leq T_n$,
        \item[(2)] $d\varphi_{q-1}\wedge \dc\varphi_{q-1}\leq\ddc\phi_n$ on $\{h_n>0\}$,
        \item[(3)] $d\varphi_{q-1}\wedge \dc\varphi_{q-1}\wedge T_n\leq \ddc\phi_{n+1}\wedge T_n$.
        \item[(4)] $h_n d\varphi_{q-1}\wedge\dc\varphi_{q-1}\leq nh_n \ddc h_n\leq nT_n$.
    \end{itemize}
\end{lemma}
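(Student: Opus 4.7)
The plan is to combine two ingredients throughout: the Bedford-Taylor identity
$$T_n = dd^c(h_n^2/2) = h_n\, dd^c h_n + dh_n \wedge d^c h_n,$$
and the defining-sequence inequality $d\varphi_{q-1} \wedge d^c \varphi_{q-1} \leq dd^c \varphi_q$. Smoothness of $\varphi_{q-1}$ and $\varphi_q$ ensures that $h_n^2/2$ is $C^{1,1}$, so all currents below are classical and positivity is preserved under wedging. Part (1) is then immediate: both summands on the right of the identity are non-negative $(1,1)$-currents (since $h_n\geq 0$ is psh), so each is bounded above by their sum $T_n$. For part (2), I would observe that on the open set $\{h_n > 0\}=\{\varphi_q > -n\}$ one has $\phi_n = \varphi_q$, hence $dd^c\phi_n = dd^c\varphi_q$, at which point the defining-sequence inequality closes the argument.

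For part (3), the key step is a support analysis of $T_n$: since $h_n$ vanishes on the open set $\{\varphi_q < -n\}$, so does $T_n$, giving $\supp T_n \subset \{\varphi_q \geq -n\}\subset U := \{\varphi_q > -n-1\}$. On the open set $U$ one has $\phi_{n+1} = \varphi_q$, so exactly as in (2), $d\varphi_{q-1}\wedge d^c\varphi_{q-1} \leq dd^c\phi_{n+1}$ holds as an inequality of smooth $(1,1)$-forms on $U$. Wedging with the positive closed current $T_n$, whose entire support sits inside $U$ and whose behavior off $U$ is therefore irrelevant, produces (3).

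For part (4), the plan is to rewrite (2) as $d\varphi_{q-1}\wedge d^c\varphi_{q-1} \leq n\, dd^c h_n$ on $\{h_n > 0\}$ (using $\phi_n = nh_n - n$), multiply by the non-negative factor $h_n$, and note that both sides vanish on the interior of $\{h_n = 0\}$; combining with (1) will then give the chain $h_n\, d\varphi_{q-1}\wedge d^c\varphi_{q-1} \leq n h_n\, dd^c h_n \leq n T_n$. The one delicate point throughout will be justifying these inequalities across the locus $\{\varphi_q = -n\}$ where $h_n$ is only Lipschitz, not $C^2$. Since $\varphi_q$ is smooth, however, $h_n^2/2$ is $C^{1,1}$, so $T_n$ is a locally bounded form and Bedford-Taylor theory handles the wedge products; if any subtlety remains, one can replace $\max(\cdot,-n)$ by a smooth regularized maximum, carry out the computation pointwise, and pass to the limit. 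This is the main technical obstacle, but it is entirely standard.
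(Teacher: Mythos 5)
Your proof is correct and follows the same standard approach the paper implicitly uses (via its citation to \cite[Lemma 2.2]{Vigny-Vu-Lebesgue}): the Bedford--Taylor decomposition $T_n = h_n\,dd^c h_n + dh_n\wedge d^c h_n$ for (1), the identification $\phi_n = \varphi_q$ on $\{h_n>0\}$ for (2), the support inclusion $\supp T_n \subset \{\varphi_q \geq -n\} \subset \{\varphi_q > -n-1\}$ together with $\phi_{n+1}=\varphi_q$ there for (3), and the rescaling $dd^c\phi_n = n\,dd^c h_n$ plus multiplication by $h_n$ and (1) for (4). Your observation that passing from $\phi_n$ to $\phi_{n+1}$ in (3) is precisely what guarantees smoothness of the comparison function on an open neighborhood of $\supp T_n$ is the one genuinely delicate point, and you handled it correctly.
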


Similarly to \cite{Vigny-Vu-Lebesgue}, for every $m \in \N$, $K\Subset \B$, we define the following quantities:
$$I_{m,p,K}=\sup\limits_{v_1,\ldots,v_p}\int_K h_n^2\varphi^{2m}\ddc v_1\wedge\cdots\wedge\ddc v_p\wedge\omega^{k-p} \text{ for } 0\leq p \leq k,$$
$$J_{m,p,K}=\sup\limits_{v_1,\ldots,v_p}\int_K \varphi^{2m}T_n\ddc v_1\wedge\cdots\wedge\ddc v_p\wedge\omega^{k-p-1} \text{ for }\leq p \leq k-1.$$
Here, the supremum taken over all psh functions $v_j$ on $\B$ with $0\leq v_j \leq 1$. These quantities can be seen as a higher analogue of the (same notation) quantities in \cite{Vigny-Vu-Lebesgue}. As noted after Theorem~\ref{theoremA}, we use the canonical representative of $\varphi$ here, and thus all the integrals make sense. Note that all definitions depend on $n$. However, we omit the index $n$ as the estimates in the next section do not depend on $n$.

\subsection{Estimates}

We now prove some estimates for these quantities. As noted in the beginning of this section, the additional data in the defining sequence helps us to prove stronger estimates.

Let $K$ be a compact subset of $\B$. By standard regularization we can assume that $\varphi,\varphi_1,\ldots,\varphi_q$ are smooth functions in an open neighborhood of $K$. Let $\chi$ be a smooth cut-off function such that $\chi\equiv1$ on an open neighborhood of $K$, and $\chi$ is compactly supported on $B$. In what follows, we use $\lesssim$ or $\gtrsim$ to denote $\leq$ or $\geq$, respectively, modulo a multiplicative constant independent of $n$ and $\varphi$ provided $\|\varphi\|_{*,q}\leq 1$.

We first obtain the following lemma, which generalizes \cite[Lemma 2.3]{Vigny-Vu-Lebesgue}.

\begin{lemma}\label{m0K}
    There is a constant $c=c_{m,K}$ independent of $\varphi$ and $n$ such that $$J_{m,0,K}\leq c n^{\frac{m}{2^{q-1}}} \text{ for every } n.$$ 
\end{lemma}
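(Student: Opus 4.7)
My plan is to prove the lemma by induction on $q$, using iterated integration by parts to move the $dd^c$ operators through the defining sequence $(\varphi_1,\ldots,\varphi_q)$. The base case $q=1$ is the analogue of \cite[Lemma~2.3]{Vigny-Vu-Lebesgue}, which gives $J_{m,0,K}\leq cn^m$. For the inductive step, after reducing to smooth data via Theorem~\ref{localdense} and subtracting a constant so that $\varphi_q\leq 0$, Stokes' formula provides
\[
J_{m,0,K}\;\leq\;\int_{\B}\chi\,\varphi^{2m}\,dd^c(h_n^2/2)\wedge\omega^{k-1}
\;=\;\int_{\B}(h_n^2/2)\,dd^c(\chi\,\varphi^{2m})\wedge\omega^{k-1}.
\]
Upon expanding $dd^c(\chi\,\varphi^{2m})$, the terms carrying $d\chi$ or $dd^c\chi$ are $O(1)$ by Lemma~\ref{Cor of M-T DMV} together with $\|\varphi\|_{W^{1,2}}\leq 1$, while the main term $2m(2m-1)\int(h_n^2/2)\chi\varphi^{2m-2}\,d\varphi\wedge d^c\varphi\wedge\omega^{k-1}$ is bounded, via the defining-sequence inequality $d\varphi\wedge d^c\varphi\leq dd^c\varphi_1$, by a constant multiple of $\int h_n^2\chi\varphi^{2m-2}\,dd^c\varphi_1\wedge\omega^{k-1}$; the remaining signed piece $\varphi^{2m-1}dd^c\varphi$ is reduced to non-negative integrands by one further Stokes application.

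I would then iterate this reduction down the chain. Each repetition applies $d\varphi_{j-1}\wedge d^c\varphi_{j-1}\leq dd^c\varphi_j$ together with a Cauchy--Schwarz step that rebalances the factors of $h_n$ against the powers of $\varphi$ and of the intermediate $\varphi_j$'s. After $q-1$ such passes the integrand contains $d\varphi_{q-1}\wedge d^c\varphi_{q-1}$, at which point Lemma~\ref{estimates}(4)---namely $h_n\,d\varphi_{q-1}\wedge d^c\varphi_{q-1}\leq nT_n$---returns the integral to one involving $T_n$ but with a reduced power of $\varphi$ and a single explicit factor of $n$. Each of the $q-1$ Cauchy--Schwarz applications halves the exponent of $n$ that is ultimately attached to the estimate, yielding in the end the bound $c\,n^{m/2^{q-1}}$. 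This exponent matches the scaling dictated by the borderline function $-(-\log|z_1|^2)^\alpha$ of Example~\ref{mainexample}, where $|\varphi|^{2^q}\sim|\varphi_q|$ and $|\varphi_q|\leq n$ on the support of $T_n$.

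The main obstacle is bookkeeping: one must simultaneously track the non-negative and signed pieces of $dd^c(\varphi^{2m})$, the boundary contributions from $\chi$, and the compounding effect of Cauchy--Schwarz across the $q-1$ iterations, while verifying that every intermediate $L^p$-type moment of $\varphi$ is controlled uniformly in $n$ by Lemma~\ref{Cor of M-T DMV} and that the cumulative exponent of $n$ is exactly $m/2^{q-1}$. The factor $1/2^{q-1}$ reflects precisely the $q-1$ halvings inherent in the successive Cauchy--Schwarz steps along the defining-sequence chain, in parallel with the appearance of $1/2^j$ in the defining quasinorm $\|\cdot\|_{*,q}$.
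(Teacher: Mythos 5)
Your overall intuition about where the exponent $m/2^{q-1}$ comes from --- a chain of $q-1$ Cauchy--Schwarz steps along the defining sequence, each halving the exponent, with the raw factor of $n$ entering through $\phi_n\geq -n$ via Lemma~\ref{estimates}(4) --- is exactly the mechanism in the paper. But your opening decomposition is genuinely different from the paper's, and as sketched it contains a gap. The paper applies Stokes' formula \emph{once}, starting from $T_n=dd^c(h_n^2/2)$ and writing $d^c h_n^2=2h_n\,d^c h_n$; this always leaves a free $d^c h_n$ to pair against $d\chi$ or $d\varphi$, so every integral after Cauchy--Schwarz factors through $dh_n\wedge d^ch_n\leq T_n$ (hence $J_m$) on one side. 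You instead move $dd^c$ entirely onto $\chi\varphi^{2m}$, obtaining $dd^c(\chi\varphi^{2m})$, and treat the positive piece $\varphi^{2m-2}\,d\varphi\wedge d^c\varphi$ and the signed piece $\varphi^{2m-1}\,dd^c\varphi$ separately. The problem is that when you Stokes the signed piece, the expansion of $d\bigl((h_n^2/2)\chi\varphi^{2m-1}\bigr)$ produces exactly $-(2m-1)\int(h_n^2/2)\chi\varphi^{2m-2}\,d\varphi\wedge d^c\varphi\wedge\omega^{k-1}$, which cancels the ``main term'' you propose to bound via $d\varphi\wedge d^c\varphi\leq dd^c\varphi_1$. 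So your plan of ``bound the main term, then reduce the signed piece by one more Stokes'' is not consistent: after that Stokes the main term vanishes, and the terms that actually survive are the cross terms $dh_n\wedge d^c\varphi$ and $d\chi\wedge d^c\varphi$. Those cross terms are exactly what the paper's single-Stokes route exposes directly.

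There is a second, smaller inaccuracy. You assert that the terms carrying $d\chi$ or $dd^c\chi$ are $O(1)$ from Lemma~\ref{Cor of M-T DMV} and $\|\varphi\|_{W^{1,2}}\leq 1$. The $dd^c\chi$ term is indeed $O(1)$, but for $m\geq 2$ the $d\chi$ cross terms involve $\int\varphi^{2m-2}\,d\varphi\wedge d^c\varphi\wedge\omega^{k-1}$, which is not controlled by the $W^{1,2}$ norm alone; after the appropriate Cauchy--Schwarz (putting one $h_n^2$ on each side and splitting $\varphi^{2m-1}$ as $\varphi^{m}\cdot\varphi^{m-1}$) they come out as $O(\sqrt{B_{1,m-1}})$ or $O(\sqrt{J_m})$, not $O(1)$. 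This is not fatal --- the paper's bound $J_m\lesssim B_{1,m-1}$ absorbs such contributions --- but your $O(1)$ claim papers over it. Finally, a minor point of framing: the paper's argument is not an induction on $q$. For fixed $q$ it chains the quantities $B_{j,m}=\int\chi^2\varphi^{2m}h_n^2\,dd^c\varphi_j\wedge\omega^{k-1}$ along $j=1,\dots,q-1$ via $B_{j,m}^2\lesssim B_{1,m-1}B_{j+1,m}$ and $B_{q-1,m}\lesssim\sqrt{n}\,B_{1,m-1}$, then inducts on $m$ to get $B_{1,m}\lesssim n^{m/2^{q-1}}$, with base case $B_{1,0}\leq\|dd^c\varphi_1\|\leq 1$. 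This matters for a correct write-up, because the inductive hypothesis for a smaller $q$ would refer to a \emph{different} $T_n$ (built from $\varphi_{q-1}$), so it would not feed into the statement you want.
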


\begin{proof}
It is sufficient to prove that
    $$J_m=\int_\B \chi^2\varphi^{2m}T_n\wedge\omega^{k-1}\lesssim n^{\frac{m}{2^{q-1}}}.$$

    It follows from the definition of $T_n$ and Stokes' formula that
    \begin{align*}
        J_m&=\frac{1}{2}\int_\B\chi^2\varphi^{2m}\ddc h_n^2\wedge\omega^{k-1}\\
        &=-\int_\B\chi\varphi^{2m} d\chi\wedge\dc h_n^2\wedge\omega^{k-1}-m\int_\B\chi^2\varphi^{2m-1} d\varphi\wedge\dc h_n^2\wedge\omega^{k-1}\\
        &=-2\Big(\int_\B\chi\varphi^{2m} h_n d\chi\wedge\dc h_n\wedge\omega^{k-1}+m\int_\B\chi^2\varphi^{2m-1} h_n d\varphi\wedge\dc h_n\wedge\omega^{k-1}\Big).
    \end{align*}

    Let $A_1$ and $A_2$ be respectively the first and second integrals inside the brackets. It follows from Cauchy-Schwarz inequality, Corollary~\ref{Cor of M-T DMV} and Lemma~\ref{estimates} that
    \begin{align*}
        A_1^2&\leq \Big(\int_\B\chi^2\varphi^{2m}dh_n\wedge \dc h_n\wedge\omega^{k-1}\Big) \Big(\int_\B \varphi^{2m} h_n^2d\chi\wedge\dc\chi\wedge\omega^{k-1}\Big)\\
        &\lesssim\Big(\int_\B \chi^2\varphi^{2m} T_n\wedge\omega^{k-1}\Big) \Big(\int_{\supp(\chi)}\varphi^{2m}\wedge\omega^{k} \Big)\\
        &\lesssim J_m,
    \end{align*}
    and 
    \begin{align*}
        A_2^2&\leq \Big( \int_\B\chi^2\varphi^{2m}dh_n\wedge \dc h_n\wedge\omega^{k-1} \Big) \Big(\int_\B \chi^2\varphi^{2m-2} h_n^2d\varphi\wedge\dc\varphi\wedge\omega^{k-1} \Big)\\
        &\leq \Big(\int_\B \chi^2\varphi^{2m} T_n\wedge\omega^{k-1} \Big) \Big( \int_\B \chi^2\varphi^{2m-2} h_n^2\ddc\varphi_1\wedge\omega^{k-1} \Big)\\
        &\leq J_m B_{1,m-1},
    \end{align*}
    where $$B_{j,m}=\int_\B \chi^2\varphi^{2m} h_n^2\ddc\varphi_j\wedge\omega^{k-1} \text{ for } j =1 ,\ldots,q. $$

    This implies $J_m\lesssim \sqrt{J_m}+\sqrt{J_m}\sqrt{B_{1,m-1}},$ and hence
    \begin{equation}\label{Jm Bm-1}
        J_m\lesssim B_{1,m-1}.
    \end{equation}

     To estimate $B_{1,m-1}$, we first observe that, by Stokes' formula, for $j=1, \ldots ,q-1$,
    \begin{align*}
        B_{j,m}&=-2\Big( \int_\B\chi\varphi^{2m}h_n^2d\chi\wedge\dc\varphi_j\wedge\omega^{k-1}\\
        &+ m\int_\B\chi^2\varphi^{2m-1}h_n^2d\varphi\wedge\dc\varphi_j\wedge\omega^{k-1}+\int_\B\chi^2\varphi^{2m}h_ndh_n\wedge\dc\varphi_j\wedge\omega^{k-1} \Big).
    \end{align*}

Let $C_{j,1},C_{j,2},$ and $ C_{j,3}$ be the first, second, and third integrals inside the brackets respectively. It follows from Cauchy-Schwarz inequality that
\begin{align*}
    C_{j,1}^2&\leq \Big(\int_\B \varphi^{2m} h_n^2 d\chi\wedge\dc\chi\wedge\omega^{k-1} \Big)
\Big(\int_\B\chi^2\varphi^{2m}h_n^2d\varphi_j\wedge\dc\varphi_j\wedge\omega^{k-1} \Big)\\
    &\lesssim \Big(\int_{\supp (\chi)}\varphi^{2m}\omega^k
    \Big) \Big(\int_\B\chi^2\varphi^{2m}h_n^2d\varphi_j\wedge\dc\varphi_j\wedge\omega^{k-1}\Big),
\end{align*}
\begin{align*}
    C_{j,2}^2&\leq \Big(\int_\B\chi^2 \varphi^{2m-2}h_n^2d\varphi\wedge\dc\varphi\wedge\omega^{k-1} \Big)\Big(\int_\B\chi^2\varphi^{2m}h_n^2d\varphi_j\wedge\dc\varphi_j\wedge\omega^{k-1} \Big)\\
    &\leq \Big( \int_\B\chi^2 \varphi^{2m-2}h_n^2\ddc\varphi_1\wedge\omega^{k-1} \Big) \Big(\int_\B\chi^2\varphi^{2m}h_n^2d\varphi_j\wedge\dc\varphi_j\wedge\omega^{k-1}\Big)
\end{align*}
and 
\begin{align*}
    C_{j,3}^2&\leq \Big( \int_\B\chi^2\varphi^{2m}dh_n\wedge\dc h_n\wedge\omega^{k-1} \Big) \Big(\int_\B\chi^2\varphi^{2m}h_n^2d\varphi_j\wedge\dc\varphi_j\wedge\omega^{k-1}\Big).
\end{align*}
It thus follows from the definition of the defining sequence, Corollary~\ref{Cor of M-T DMV}, Lemma~\ref{estimates}, and inequality \eqref{Jm Bm-1} that
$$ C_{j,1}^2\lesssim\begin{cases}
    B_{j+1,m} &\text{ if }j<q-1\\
    n B_{1,m-1} &\text{ if }j=q-1,
\end{cases}\qquad 
C_{j,2}^2\lesssim\begin{cases}
    B_{1,m-1}B_{j+1,m} &\text{ if }j<q-1\\
    n B_{1,m-1}^2 &\text{ if }j=q-1,
\end{cases}$$
 $$C_{j,3}^2\lesssim\begin{cases}
      B_{1,m-1}B_{j+1,m} &\text{ if }j<q-1\\
      n B_{1,m-1}^2 &\text{ if }j=q-1.
 \end{cases}$$
 Note that we use \eqref{Jm Bm-1} and the fact that $0\leq h_n\leq 1$ when $j=q-1$.

Then we have
$$B_{j,m}^2\lesssim\begin{cases}
    B_{j+1,m}+B_{1,m-1}B_{j+1,m} &\text{ if }j<q-1\\
    n B_{1,m-1}^2 &\text{ if }j=q-1,
\end{cases}$$
which implies
$$\begin{cases}
  B_{j,m}^2\lesssim B_{1,m-1}B_{j+1,m}\text{ for }j<q-1\\
    B_{q-1,m}\lesssim \sqrt{n} B_{1,m-1},
\end{cases}$$
and hence,
$$B_{1,m}^{2^{q-2}}\lesssim B_{q-1,m} B_{1,m-1}^{2^{q-2}-1}\lesssim \sqrt{n} B_{1,m-1}^{2^{q-2}}. $$
It thus follows that
$$B_{1,m}\lesssim n^{\frac{1}{2^{q-1}}} B_{1,m-1},$$ and hence
\begin{equation}\label{B1m nm2p-1}
    B_{1,m}\lesssim n^{\frac{m}{2^{q-1}}}.
\end{equation}

Now, combining inequalities \eqref{Jm Bm-1} and \eqref{B1m nm2p-1} gives us
$$J_m\lesssim n^{\frac{m}{2^{q-1}}},$$
as desired.
\end{proof}

The following lemma generalizes \cite[Lemma 2.4]{Vigny-Vu-Lebesgue}.

\begin{lemma}\label{mpK}
    There is a constant $c=c_{m,K}$ independent of $\varphi$ and $n$ such that $$J_{m,p,K}\leq c n^{\frac{m}{2^{q-1}}} \text{ for every } n.$$ 
\end{lemma}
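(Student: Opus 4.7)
\textit{Plan.} I would proceed by induction on $p \in \{0, 1, \ldots, k-1\}$, the base case $p=0$ being Lemma \ref{m0K}. For the inductive step at level $p \geq 1$, fix psh functions $v_1, \ldots, v_p$ on $\B$ with $0 \leq v_j \leq 1$, and set
$$\Theta := \ddc v_1 \wedge \cdots \wedge \ddc v_p \wedge \omega^{k-p-1},$$
a positive closed $(k-1,k-1)$-current. The strategy is to transcribe the proof of Lemma \ref{m0K} verbatim with $\omega^{k-1}$ replaced by $\Theta$, introducing the analogous quantities
$$\tilde J_m := \int_\B \chi^2 \varphi^{2m} T_n \wedge \Theta, \qquad \tilde B_{j,m} := \int_\B \chi^2 \varphi^{2m} h_n^2 \ddc \varphi_j \wedge \Theta,$$
together with the corresponding $\tilde C_{j,i}$ ($i=1,2,3$). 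Since $\Theta$ is closed, every Stokes-formula step still produces no boundary contribution from $\Theta$; since $\Theta$ is positive, every Cauchy--Schwarz step remains valid; and Lemma \ref{estimates}(1),(4) together with the defining-sequence inequalities $d\varphi_{j-1} \wedge \dc \varphi_{j-1} \leq \ddc \varphi_j$ remain unchanged. This should yield the chain
$$\tilde J_m \lesssim \tilde B_{1,m-1},\quad \tilde B_{j,m}^2 \lesssim \tilde B_{j+1,m}\, \tilde B_{1,m-1}\ (j<q-1),\quad \tilde B_{q-1,m}^2 \lesssim n\, \tilde B_{1,m-1}^2,$$
telescoping to $\tilde B_{1,m} \lesssim n^{1/2^{q-1}}\tilde B_{1,m-1}$, and iterating in $m$ (with the trivial bound $\tilde B_{1,0} \lesssim 1$ from Chern--Levine--Nirenberg applied to $\ddc\varphi_1 \wedge \Theta$) to $\tilde B_{1,m} \lesssim n^{m/2^{q-1}}$. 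Taking the supremum over $v_1, \ldots, v_p$ then gives $J_{m,p,K} \lesssim n^{m/2^{q-1}}$.

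The principal new subtlety appears in the $\tilde C_{j,1}$-type terms (those carrying a $d\chi \wedge \dc\chi$-factor). After the bounds $d\chi \wedge \dc\chi \lesssim \omega$ and $h_n^2 \leq 1$, these reduce to integrals of the shape
$$\int_{\supp \chi} \varphi^{2m}\, \ddc v_1 \wedge \cdots \wedge \ddc v_p \wedge \omega^{k-p},$$
i.e.\ $I_{m,p,K}$-type quantities, rather than the $\int \varphi^{2m}\omega^k$ that was handled by Lemma \ref{Cor of M-T DMV}. I would therefore establish in parallel a uniform bound on $I_{m,p,K}$, independent of $n$, of $v_j$, and of $\varphi$ with $\|\varphi\|_{*,q}\leq 1$. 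This I would do by a companion induction on $p$: integrate one $\ddc v_p$ by parts against $v_p \in [0,1]$, expand $\ddc(\chi^2 h_n^2 \varphi^{2m})$ by the Leibniz rule, and use the defining-sequence inequalities together with $h_n \leq 1$ to control the resulting terms by $I_{m',p-1,K'}$-type quantities plus $\tilde B_{1,m-1}$-type quantities already in the main chain. The base $p=0$ of this companion induction is supplied by Lemma \ref{Cor of M-T DMV}.

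The main obstacle is precisely this companion bookkeeping: expanding $\ddc(\chi^2 h_n^2 \varphi^{2m})$ produces a sum of terms with various orders of derivatives of $\chi, h_n, \varphi$, and each piece must be reabsorbed via Cauchy--Schwarz into either the $I$-chain (on the auxiliary side) or the $\tilde B_{j,m}$-chain (on the main side) before the double induction can close. Once these two intertwined chains are organized in parallel, the overall estimate reduces, exactly as in Lemma \ref{m0K}, to the single inequality $\tilde B_{1,m} \lesssim n^{m/2^{q-1}}$, and the statement follows by standard regularization to remove the smoothness assumption on $\varphi_1, \ldots, \varphi_q$.
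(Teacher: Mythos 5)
Your approach is sound in outline but takes a genuinely different route from the paper, and it is worth noting the trick the paper uses to sidestep the "companion bookkeeping" you flag as the main obstacle. You hold the positive closed current $\Theta = \ddc v_1\wedge\cdots\wedge\ddc v_p\wedge\omega^{k-p-1}$ fixed and integrate $T_n=\ddc(h_n^2/2)$ by parts, exactly as in Lemma~\ref{m0K}. As you observe, this forces the $d\chi\wedge\dc\chi$-terms to produce $I_{m,p,\supp\chi}$-type quantities (at the \emph{same} $p$), so you must run a parallel induction establishing the $I$-bound first, essentially proving Lemma~\ref{ImpK} inside the proof of Lemma~\ref{mpK}. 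The paper instead keeps $T_n$ \emph{inside} the auxiliary current, setting
$$R = \ddc v_2\wedge\cdots\wedge\ddc v_p\wedge T_n\wedge\omega^{k-p-1},$$
and integrates $\ddc v_1$ by parts against $\chi^2\varphi^{2m}$. With that choice, the first Cauchy--Schwarz factor $\int\varphi^{2m}\,d\chi\wedge\dc\chi\wedge R\lesssim\int_{\supp\chi}\varphi^{2m}\,R\wedge\omega$ still carries the $T_n$-factor and is therefore $J_{m,p-1,\supp\chi}$, so the entire argument stays within the $J$-chain and never touches $I_{m,p,K}$; the quantities $E_{j,m}=\sup\int\chi^2\varphi^{2m}\ddc\varphi_j\wedge R$ replace your $\tilde B_{j,m}$, and the induction on $p$ closes cleanly against Lemma~\ref{m0K}. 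The paper then proves the $I$-bound afterwards (Lemma~\ref{ImpK}), using $J_{m,p-1}$ and $I_{m,p-1}$, so the logical order is $J$-first then $I$ rather than your interleaved scheme. Your scheme can be made to close --- since the $I$-bound at level $p$ only requires $I$- and $J$-data at level $p-1$ there is no genuine circularity, and one could establish $I_{m,p}$ at each level before $J_{m,p}$ --- but it forces you to prove both lemmas simultaneously and replaces the clean telescoping $\tilde B_{1,m}\lesssim n^{1/2^{q-1}}\tilde B_{1,m-1}$ with the weaker $\tilde B_{1,m}\lesssim n^{1/2^{q-1}}(\tilde B_{1,m-1}+n^{m/2^{q-1}})$, which still suffices but requires the companion $I$-bound to be available at exactly the right moment. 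The paper's choice of $R$ buys you a self-contained $J$-induction and is the cleaner path.
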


\begin{proof}
We prove the lemma by induction in $p$. If $p=0$, the desired assertion is Lemma~\ref{m0K}. Assume now that it is true for all $p'$ with $p' \leq p-1$. It is sufficient to prove that
$$J_{m,p}=\sup\limits_{v_1,\ldots,v_p}\int_\B \chi^2 \varphi^{2m}T_n\ddc v_1\wedge\cdots\wedge\ddc v_p\wedge\omega^{k-p-1}\lesssim n^{\frac{m}{2^{q-1}}},$$
where the supremum taken over all psh functions $v_j$ on $\B$ with $0 \leq v_j \leq 1$.

We prove this inequality by induction on $m$ ($p$ now fixed). When $m=0$, it is obvious. Assume that it is true for all $m'$ with $m'\leq m-1$. Let $v_1,\ldots,v_p$ be psh functions in $\B$ that take values in $[0,1]$. We set $R=\ddc v_2\wedge\cdots\wedge \ddc v_p\wedge T_n\wedge \omega^{k-p-1}.$ It follows from Stokes' formula that
\begin{align*}
    \int_\B &\chi^2 \varphi^{2m}T_n\ddc v_1\wedge\cdots\wedge\ddc v_p\wedge\omega^{k-p-1}\\
&=-2\Big(\int_\B\chi\varphi^{2m}d\chi\wedge\dc v_1\wedge R+m\int_\B\chi^2\varphi^{2m-1}d\varphi\wedge\dc v_1\wedge R\Big).
\end{align*}

Let $D_1$ and $D_2$ be the first and second integrals inside the brackets respectively. By the Cauchy-Schwarz inequality, the induction hypothesis on $p$, the induction hypothesis on $m$, and Lemma~\ref{estimates}, we have
\begin{align*}
    D_1^2&\leq \Big(\int_\B \varphi^{2m} d\chi\wedge\dc\chi\wedge R \Big) \Big( \int_\B\chi^2\varphi^{2m} dv_1\wedge\dc v_1\wedge R \Big)\\
    &\lesssim \Big(\int_{\supp(\chi)}\varphi^{2m}R\wedge\omega \Big) \Big(
    \int_\B\chi^2\varphi^{2m}\ddc(v_1^2)\wedge R \Big)\\
    &\lesssim J_{m,p-1,\supp (\chi)} J_{m,p}\\
    &\lesssim n^{\frac{m}{2^{q-1}}} J_{m,p},
\end{align*}
and
\begin{align*}
    D_2^2&\leq \Big(\int_\B\chi^2\varphi^{2m-2}d\varphi\wedge\dc\varphi\wedge R \Big) \Big( \int_\B \chi^2\varphi^{2m}dv_1\wedge\dc v_1\wedge R \Big)\\
    &\lesssim \Big(\int_\B\chi^2\varphi^{2m-2}\ddc\varphi_1\wedge R \Big) \Big(\int_\B\chi^2\varphi^{2m}\ddc (v_1^2)\wedge R \Big)\\
    &\lesssim E_{1,m-1} J_{m,p},
\end{align*}
where
$$E_{j,m}=\sup_{v_2,\ldots,v_p}\int_\B \chi^2\varphi^{2m}\ddc\varphi_j\wedge R \text{ for } j=1,\ldots,q.$$
Note that since $\chi$ depends on $K$, the estimate here only depends on $K$.

Then by taking the supremum over all such $v_1,\ldots,v_p$, we observe that
\begin{equation}\label{Jmp Em-1}
    J_{m,p}\lesssim n^{\frac{m}{2^{q-1}}}+E_{1,m-1}.
\end{equation}

To estimate $E_{1,m-1}$, we consider $v_1,\ldots,v_p$ as above.
It follows from Stokes' formula that, for $j=1,\ldots,q-1$,
$$\int_\B \chi^2\varphi^{2m}\ddc\varphi_j\wedge R=-2\Big( \int_\B\chi\varphi^{2m}d\chi\wedge\dc\varphi_j\wedge R+m\int_\B\chi^2\varphi^{2m-1}d\varphi\wedge\dc\varphi_j\wedge R \Big).$$

Let $F_{j,1}$ and $F_{j,2}$ be the first and second integrals inside the brackets respectively. By the Cauchy-Schwarz inequality, the induction hypothesis on $p$, and the induction hypothesis on $m$, we have
\begin{align*}
    F_{j,1}^2&\leq \Big(\int_\B\varphi^{2m} d\chi\wedge\dc\chi\wedge R \Big) \Big(\int_\B\chi^2\varphi^{2m}d\varphi_j\wedge d^c\varphi_j\wedge R \Big)\\
    &\lesssim  J_{m,p-1,\supp(\chi)}\Big(\int_\B\chi^2\varphi^{2m}d\varphi_j\wedge d^c\varphi_j\wedge R \Big)\\
    &\lesssim n^{\frac{m}{2^{q-1}}} \Big(\int_\B\chi^2\varphi^{2m}d\varphi_j\wedge d^c\varphi_j\wedge R\Big),
\end{align*}
and
\begin{align*}
    F_{j,2}^2&\leq \Big(\int_\B\chi^2\varphi^{2m-2}d\varphi\wedge \dc\varphi\wedge R \Big) \Big(\int_\B\chi^2\varphi^{2m}d\varphi_j\wedge\dc\varphi_j\wedge R \Big)\\
    &\leq \Big(\int_\B\chi^2\varphi^{2m-2} \ddc\varphi_1\wedge R \Big) \Big(\int_\B\chi^2\varphi^{2m}d\varphi_j\wedge\dc\varphi_j\wedge R \Big)\\
    &\leq  E_{1,m-1} \Big(\int_\B\chi^2\varphi^{2m}d\varphi_j\wedge\dc\varphi_j\wedge R\Big).
\end{align*}

Taking the supremum over all such $v_1,\ldots,v_p$, using Lemma~\ref{estimates} and inequality \eqref{Jmp Em-1}, we have
$$E_{j,m}^2\lesssim\begin{cases}
    E_{j+1,m}\Big(E_{1,m-1}+ n^{\frac{m}{2^{q-1}}} \Big) &\text{ if }j<q-1\\
    n\Big( n^{\frac{m}{2^{q-1}}} (n^{\frac{m}{2^{q-1}}}+E_{1,m-1}) + E_{1,m-1}(n^{\frac{m}{2^{q-1}}}+E_{1,m-1}) \Big)&\text{ if }j=q-1.
\end{cases}$$
It thus follows that
$$\begin{cases}
    E_{j,m}^2\lesssim E_{j+1,m} \Big(E_{1,m-1}+ n^{\frac{m}{2^{q-1}}}\Big) \text{ if }j<q-1\\
    E_{q-1,m}\lesssim \sqrt{n} \Big(E_{1,m-1}+ n^{\frac{m}{2^{q-1}}}\Big).
\end{cases}$$

Then we have
$$E_{1,m}^{2^{q-2}}\leq E_{q-1,m} \Big(E_{1,m-1}+ n^{\frac{m}{2^{q-1}}}\Big)^{2^{q-2}-1}\lesssim \sqrt{n} \Big(E_{1,m-1}+ n^{\frac{m}{2^{q-1}}}\Big)^{2^{q-2}},$$
and hence,
$$E_{1,m}\lesssim n^{\frac{1}{2^{q-1}}}\Big(E_{1,m-1}+ n^{\frac{m}{2^{p-1}}}\Big).$$
Therefore, we obtain
\begin{equation}\label{E1m nm2q-1}
    E_{1,m}\lesssim n^{\frac{m+1}{2^{q-1}}}.
\end{equation}

Combining inequalities \eqref{Jmp Em-1} and \eqref{E1m nm2q-1} gives us $$J_{m,p}\lesssim n^{\frac{m}{2^{q-1}}}$$ as desired.
\end{proof}

Finally, we estimate $I_{m,p,K}$, these estimates generalize \cite[Lemma 2.5]{Vigny-Vu-Lebesgue}.

\begin{lemma}\label{ImpK}
    There is a constant $c=c(m,K)$ independent of $\varphi$ and $n$ such that $$I_{m,p,K}\leq c n^{\frac{m}{2^{q-1}}}$$ for every $n$.
\end{lemma}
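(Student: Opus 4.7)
The plan is to mimic the proof of Lemma~\ref{mpK}, running a double induction on $p$ and then on $m$, but now integrating against the weight $h_n^2$ in place of $T_n$. The base case $p=0$ is immediate: since $0\leq h_n\leq 1$, we have $I_{m,0,K}\leq\int_K\varphi^{2m}\omega^k$, which is bounded by a constant depending on $m$ and $K$ by Lemma~\ref{Cor of M-T DMV}, hence trivially $\lesssim n^{m/2^{q-1}}$.

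For the induction step with $p\geq 1$, I would write $I_{m,p}$ with the cut-off $\chi$ (as in the previous lemmas), set $S=\ddc v_2\wedge\cdots\wedge\ddc v_p\wedge\omega^{k-p}$, and use Stokes' formula on $\ddc v_1$:
\begin{align*}
\int_\B\chi^2 h_n^2 \varphi^{2m}\,\ddc v_1\wedge S
&= -2\!\int_\B\chi h_n^2\varphi^{2m}\,d\chi\wedge\dc v_1\wedge S \\
&\quad -2\!\int_\B\chi^2 h_n\varphi^{2m}\,dh_n\wedge\dc v_1\wedge S \\
&\quad -2m\!\int_\B\chi^2 h_n^2\varphi^{2m-1}\,d\varphi\wedge\dc v_1\wedge S.
\end{align*}
To each of the three terms I would apply the Cauchy--Schwarz inequality, pairing the $\dc v_1$ factor with $dv_1\wedge\dc v_1\leq \ddc(v_1^2)/2$, which produces a factor of $I_{m,p}^{1/2}$ (up to constants, since $v_1^2/2$ is psh with values in $[0,1/2]$). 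The other factors are, respectively: (i) $d\chi\wedge\dc\chi\wedge\cdots$, controlled by $I_{m,p-1,\supp\chi}$; (ii) $dh_n\wedge\dc h_n\wedge\cdots$, controlled by $T_n\wedge\cdots$, i.e.\ by $J_{m,p-1,\supp\chi}$ using Lemma~\ref{estimates}(1); and (iii) $d\varphi\wedge\dc\varphi\wedge\cdots\leq \ddc\varphi_1\wedge\cdots$, which is a new auxiliary quantity
$$\tilde E_{j,m,p}:=\sup_{v_2,\ldots,v_p}\int_\B \chi^2 h_n^2\varphi^{2m}\,\ddc\varphi_j\wedge S,\qquad j=1,\ldots,q.$$
Dividing through by $\sqrt{I_{m,p}}$, the induction hypothesis on $p$ for $I$ and Lemma~\ref{mpK} for $J$ yield
\[I_{m,p}\lesssim n^{m/2^{q-1}}+\tilde E_{1,m-1,p}.\]

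The remaining and main obstacle is to prove $\tilde E_{1,m-1,p}\lesssim n^{m/2^{q-1}}$. For this I would iterate Stokes on $\tilde E_{j,m,p}$ (for $j\leq q-2$) exactly as in Lemma~\ref{mpK}: integration by parts on $\ddc\varphi_j$, Cauchy--Schwarz, and the sub-mean bound $d\varphi_j\wedge\dc\varphi_j\leq \ddc\varphi_{j+1}$ generate the recursion
$$\tilde E_{j,m,p}^{\,2}\lesssim \tilde E_{j+1,m,p}\bigl(\tilde E_{1,m-1,p}+n^{m/2^{q-1}}\bigr),\qquad j<q-1,$$
using the induction hypothesis on $p$ and Lemma~\ref{mpK} to handle the boundary terms from $d\chi$. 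At the top step $j=q-1$, I use Lemma~\ref{estimates}(4), $h_n d\varphi_{q-1}\wedge\dc\varphi_{q-1}\leq nT_n$, together with one factor of $h_n\leq 1$, to convert the remaining $d\varphi_{q-1}\wedge\dc\varphi_{q-1}$ into $nT_n$ and fall back onto the bound of Lemma~\ref{mpK}, giving $\tilde E_{q-1,m,p}\lesssim \sqrt n\,(n^{m/2^{q-1}}+\tilde E_{1,m-1,p})$. Telescoping the recursion over $j=1,\ldots,q-1$ yields $\tilde E_{1,m,p}\lesssim n^{1/2^{q-1}}(n^{m/2^{q-1}}+\tilde E_{1,m-1,p})$, and then the induction on $m$ closes the loop: $\tilde E_{1,m-1,p}\lesssim n^{m/2^{q-1}}$, which combined with the earlier inequality produces $I_{m,p}\lesssim n^{m/2^{q-1}}$.

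The delicate step will be to keep the recursion for $\tilde E_{j,m,p}$ from losing the factor $h_n^2$ prematurely: the factor $h_n^2$ must be preserved when integrating by parts on $\ddc\varphi_j$ so that at $j=q-1$ one copy of $h_n$ is still available to invoke Lemma~\ref{estimates}(4), which is exactly what caps the accumulation of $n$-powers at the final rate $n^{m/2^{q-1}}$. Once this bookkeeping is in place, the argument is a straightforward adaptation of the scheme used in Lemmas~\ref{m0K} and~\ref{mpK}.
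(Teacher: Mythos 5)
Your proposal is correct and follows the paper's proof essentially step for step: double induction on $p$ then $m$, Stokes on $\ddc v_1$ into three boundary terms handled by Cauchy--Schwarz (using $I_{m,p-1}$, $J_{m,p-1}$, and an auxiliary family which is exactly the paper's $H_{j,m}$), then a telescoping recursion in $j$ that is capped at $j=q-1$ via Lemma~\ref{estimates}(4). The bookkeeping on the $h_n^2$ weight that you flag as delicate is precisely what the definition of $H_{j,m}$ encodes in the paper, so there is no gap.
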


\begin{proof}
    We argue similarly as in the proof of Lemma~\ref{mpK}, by induction on $p$. If $p = 0$, the desired assertion follows from Corollary~\ref{Cor of M-T DMV}. We assume now that it is true for every $p'$ with $p'\leq p-1$. To prove the desired assertion, it suffices to prove that
    $$I_{m,p}=\sup\limits_{v_1,\ldots,v_p}\int_\B\chi^2 \varphi^{2m} h_n^2\ddc v_1\wedge\cdots\wedge\ddc v_p\wedge\omega^{k-p}\lesssim n^{\frac{m}{2^{q-1}}},$$
    where the supremum taken over all psh functions $v_j$ on $\B$ with $0 \leq v_j \leq 1$. 
    
    We prove this inequality by induction on $m$ ($p$ now fixed). When $m=0$, it is obvious. Assume that it is true for all $m'$ with $m'\leq m-1$. Let $v_1,\ldots,v_p$ be psh functions take value in $[0,1]$. We set $R'=\ddc v_2\wedge\cdots\wedge \ddc v_p\wedge \omega^{k-p}$. It follows from Stokes' formula that
    \begin{align*}
        \int_\B\chi^2  \varphi^{2m} h_n^2\ddc v_1\wedge R'=&-2\Big( 
        \int_\B \chi\varphi^{2m} h_n^2 d\chi\wedge\dc v_1\wedge R'\\ 
        +&\int_\B \chi^2\varphi^{2m}h_n dh_n\wedge d^c v_1\wedge R'+m\int_\B \chi^2\varphi^{2m-1} h_n^2 d\varphi\wedge \dc v_1\wedge R'
        \Big).
    \end{align*}

    Let $G_1, G_2$, and $ G_3$ be the first, second, and third integrals inside the brackets respectively. By the Cauchy-Schwarz inequality, the induction hypothesis on $p$, the induction hypothesis on $m$, and Lemma~\ref{mpK}, we have
    \begin{align*}
        G_1^2&\leq \Big(\int_\B h_n^2\varphi^{2m}d\chi\wedge\dc\chi\wedge R' \Big) \Big(\int_\B h_n^2\varphi^{2m}\chi^2 dv_1\wedge \dc v_1\wedge R' \Big)\\
        &\lesssim I_{m,p-1,\supp(\chi)} \Big(\int_\B h_n^2\varphi^{2m}\chi^2 \ddc(v_1^2)\wedge R' \Big)\\
        &\lesssim n^{\frac{m}{2^{q-1}}} I_{m,p},
    \end{align*}
   \begin{align*}
        G_2^2&\leq \Big(\int_\B \chi^2 \varphi^{2m} dh_n\wedge d^c h_n\wedge R'\Big) \Big( \int_\B\chi^2 \varphi^{2m} h_n^2 dv_1\wedge\dc v_1\wedge R' \Big)\\
        &\leq \Big(\int_\B\chi^2\varphi^{2m} T_n\wedge R'\Big) \Big( \int_\B \chi^2\varphi^{2m} h_n^2 \ddc(v_1^2)\wedge R' \Big)\\
        &\leq J_{m,p-1,\supp(\chi)} I_{m,p}\\
        &\lesssim n^{\frac{m}{2^{q-1}}} I_{m,p},
   \end{align*} 
   and
   \begin{align*}
       G_3^2&\leq  \Big(\int_\B \chi^2\varphi^{2m-2}h_n^2 d\varphi\wedge\dc\varphi\wedge R' \Big) \Big( \int_\B\chi^2\varphi^{2m}h_n^2 dv_1\wedge \dc v_1\wedge R' \Big)\\
       &\leq \Big(\int_\B \chi^2\varphi^{2m-2}h_n^2 \ddc\varphi_1\wedge R'\Big) \Big( \int_\B\chi^2\varphi^{2m}h_n^2 \ddc(v_1^2)\wedge R' \Big)\\
       &\leq H_{1,m-1} I_{m,p}
    \end{align*}
    where
    $$H_{j,m}=\sup\limits_{v_2,\ldots,v_p} \int_\B \chi^2\varphi^{2m}h_n^2 \ddc\varphi_j\wedge R' \text{ for } j =1,\ldots,q.$$

    Then, by taking the supremum over all sucht $v_1,\ldots,v_p$, we have
    \begin{equation}\label{Imp nm/2 Hm-1}
        I_{m,p}\lesssim n^{\frac{m}{2^{q-1}}}+H_{1,m-1}.
    \end{equation}

    To estimate $H_{1,m-1}$, we consider $v_2,\ldots,v_p$ as above.
It follows from Stokes' formula that, for $j=1,\ldots,q-1$,
\begin{align*}
        \int_\B \chi^2\varphi^{2m}h_n^2 \ddc\varphi_j\wedge R'&=-2\Big( \int_\B\chi\varphi^{2m}h_n^2d\chi\wedge\dc\varphi_j\wedge R'+\\
        &m\int_\B\chi^2\varphi^{2m-1}h_n^2d\varphi\wedge\dc\varphi_j\wedge R' + \int_\B\chi^2\varphi^{2m}h_ndh_n\wedge\dc\varphi_j\wedge R'\Big).
    \end{align*}

Let $L_{j,1}, L_{j,2}$, and $ L_{j,3}$ be the first, second, and third integrals inside the brackets respectively. It follows from Lemma~\ref{estimates}, Lemma~\ref{mpK}, Cauchy-Schwarz inequality and the induction hypothesis on $p$ that
\begin{align*}
    L_{j,1}^2&\leq \Big(\int_\B \varphi^{2m} h_n^2 d\chi\wedge\dc\chi\wedge R' \Big) \Big(\int_\B\chi^2\varphi^{2m}h_n^2d\varphi_j\wedge\dc\varphi_j\wedge R' \Big)\\
    &\leq \Big(\int_{\supp(\chi)}\varphi^{2m}h_n^2\wedge R'\wedge\omega \Big) \Big(\int_\B\chi^2\varphi^{2m}h_n^2d\varphi_j\wedge\dc\varphi_j\wedge R'\Big)\\
    &\leq I_{m,p-1,\supp(\chi)} \Big(\int_\B\chi^2\varphi^{2m}h_n^2d\varphi_j\wedge\dc\varphi_j\wedge R' \Big)\\
    &\lesssim n^{\frac{m}{2^{q-1}}} \Big(\int_\B\chi^2\varphi^{2m}h_n^2d\varphi_j\wedge\dc\varphi_j\wedge R'\Big),
\end{align*}
\begin{align*}
    L_{j,2}^2&\leq \Big(\int_\B\chi^2 \varphi^{2m-2}h_n^2d\varphi\wedge\dc\varphi\wedge R' \Big) \Big(\int_\B\chi^2\varphi^{2m}h_n^2d\varphi_j\wedge\dc\varphi_j\wedge R' \Big)\\
    &\leq \Big(\int_\B\chi^2 \varphi^{2m-2}h_n^2\ddc\varphi_1\wedge R'\Big) \Big(\int_\B\chi^2\varphi^{2m}h_n^2d\varphi_j\wedge\dc\varphi_j\wedge R' \Big)\\
    &\lesssim H_{1,m-1}
    \Big(\int_\B\chi^2\varphi^{2m}h_n^2d\varphi_j\wedge\dc\varphi_j\wedge R'\Big),
\end{align*}
and
\begin{align*}
    L_{j,3}^2&\leq \Big(\int_\B\chi^2\varphi^{2m}dh_n\wedge\dc h_n\wedge R'\Big) \Big(
    \int_\B\chi^2\varphi^{2m}h_n^2d\varphi_j\wedge\dc\varphi_j\wedge R' \Big)\\
    &\leq \Big(\int_\B\chi^2\varphi^{2m} T_n\wedge R' \Big) \Big(\int_\B\chi^2\varphi^{2m}h_n^2d\varphi_j\wedge\dc\varphi_j\wedge R' \Big)\\
    &\leq J_{m,p-1,\supp(\chi)} \Big(\int_\B\chi^2\varphi^{2m}h_n^2d\varphi_j\wedge\dc\varphi_j\wedge R' \Big)\\
    &\lesssim n^{\frac{m}{2^{q-1}}} \Big(\int_\B\chi^2\varphi^{2m}h_n^2d\varphi_j\wedge\dc\varphi_j\wedge R'\Big).
\end{align*}

Taking the supremum over all such $v_2,\ldots,v_p$, using Lemma~\ref{estimates}, Lemma~\ref{mpK}, and inequality \eqref{Imp nm/2 Hm-1}, we have
$$H_{j,m}^2\lesssim\begin{cases}
    H_{j+1,m}\Big(H_{1,m-1}+ n^{\frac{m}{2^{q-1}}}\Big)\text{ if } j<q-1\\
    n \Big(H_{1,m-1}+n^{\frac{m}{2^{q-1}}} \Big)^2\text{ if } j=q-1.
\end{cases}$$
It thus follows that
$$\begin{cases}
    H_{k,m}^2\lesssim H_{k+1,m}\Big(H_{1,m-1}+ n^{\frac{m}{2^{q-1}}}\Big)\text{ if } k<q-1\\
   H_{q-1,m}\lesssim \sqrt{n} \Big(H_{1,m-1}+n^{\frac{m}{2^{q-1}}} \Big),
\end{cases}$$
and hence
$$H_{1,m-1}^{2^{q-2}}\lesssim H_{q-1,m}\Big(H_{1,m-1}+n^{\frac{m}{2^{q-1}}} \Big)^{2^{q-2}-1}\lesssim \sqrt{n} \Big(H_{1,m-1}+n^{\frac{m}{2^{q-1}}} \Big)^{2^{q-2}}.$$

Then we have
$$H_{1,m}\lesssim n^{\frac{1}{2^{q-1}}}\Big(H_{1,m-1}+n^{\frac{m}{2^{q-1}}} \Big),$$
which implies
\begin{equation}\label{H1m-1 m2q-1}
    H_{1,m}\lesssim n^{\frac{m+1}{2^{q-1}}}.
\end{equation}

Combining inequalities (\ref{Imp nm/2 Hm-1}) and (\ref{H1m-1 m2q-1}) gives
$$I_{m,p}\lesssim n^{\frac{m}{2^{q-1}}},$$ 
as desired.
\end{proof}

\subsection{Bounding by plurisubharmonic functions} With all these estimates in hand, we can now construct a psh bound for functions in $W^*_q(\B)$.

\begin{theorem}\label{main result}
    Let $\varphi \in W^*_q(\B)$ with $\|\varphi\|_{*,q}\leq 1$ and $\alpha \in[1,2^{q})$. Then for every compact subset $K$ of $\B$, there exist a constant $C > 0$ and a psh function $u$ on $\B$ such that
$$|\varphi|^\alpha \leq -u \text{ on } K \text{ and }\|u\|_{L^1(K)} \leq C.$$
\end{theorem}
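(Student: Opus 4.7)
The plan is to adapt the Josefson-type construction of Vigny--Vu (the $q=1$ case, in \cite{Vigny-Vu-Lebesgue}), feeding it the sharper estimates of Lemma~\ref{ImpK}; the gain from those estimates is precisely what enlarges the admissible range of $\alpha$ from $[1,2)$ to $[1,2^q)$. I will build $u$ as a sum $u_0+u_1$: the piece $u_0$ is an explicit psh function in $\varphi_q$ controlling the ``typical'' behavior, and $u_1$ is a convergent series of psh envelopes of exceptional dyadic slices. As a preliminary reduction, Theorem~\ref{localdense} allows us to replace $\varphi$ and a near-minimizing defining sequence by their standard mollifications, so we may assume $\varphi,\varphi_1,\ldots,\varphi_q$ are smooth on a neighborhood $K' \Supset K$ and (after subtracting a constant) $\varphi_q \le -1$ there. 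At the end, a weak-$L^1$ compactness argument on bounded families of psh functions returns us to the general $\varphi$.

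Since $\alpha/2^q<1$, the map $t\mapsto -(-t)^{\alpha/2^q}$ is convex and increasing on $(-\infty,-1]$, so $u_0 := -C_0(-\varphi_q)^{\alpha/2^q}$ is psh, non-positive, and has $\|u_0\|_{L^1(K)}$ uniformly bounded (using that psh functions with bounded mass lie in a compact subset of every $L^p$). The piece $u_0$ is designed to dominate $|\varphi|^\alpha$ whenever $|\varphi|^\alpha \lesssim (-\varphi_q)^{\alpha/2^q}$. For the complementary, ``exceptional'', regime I stratify by dyadic levels: for $(j,\ell) \in \Z_{\ge 0}^2$ put
\[
F_{j,\ell} := K\cap \{|\varphi|^\alpha \in [2^\ell, 2^{\ell+1})\} \cap \{-\varphi_q \in [2^j, 2^{j+1})\},
\]
and call $(j,\ell)$ \emph{exceptional} if $2^\ell > 2C_0 \cdot 2^{j\alpha/2^q}$. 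On such an $F_{j,\ell}$ we have $h_{2^{j+2}} \ge 1/2$ and $|\varphi|^{2m} \ge 2^{2m\ell/\alpha}$, and Lemma~\ref{ImpK} (with $p=k$, $n = 2^{j+2}$) yields, for any psh $v_1,\ldots,v_k$ on $\B$ with $0\le v_i\le 1$,
\[
\int_{F_{j,\ell}} dd^c v_1 \wedge \cdots \wedge dd^c v_k \;\le\; 4\,c_{m,K}\,2^{(j+2)m/2^{q-1}-2m\ell/\alpha} \;=\; 4\,c_{m,K}\,2^{\,4m/2^q - 2m(\ell/\alpha - j/2^q)},
\]
using the identity $m/2^{q-1}=2m/2^q$. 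For exceptional $(j,\ell)$ the exponent can be driven as negative as we wish by enlarging $C_0$ and choosing $m=m(j,\ell)$ sufficiently large, so the Bedford--Taylor capacity of $F_{j,\ell}$ can be made as small as desired.

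Feeding these capacity bounds to the local analogue of Lemma~\ref{compareBTA} (the Alexander--Siciak extremal function), one obtains for each exceptional $(j,\ell)$ a psh function $w_{j,\ell} \le 0$ on $\B$ with $w_{j,\ell}\le -1$ on a neighborhood of $F_{j,\ell}$ and $\|w_{j,\ell}\|_{L^1(K)} \lesssim \capa_{\BT}(F_{j,\ell})^{1/(2k)}$. I calibrate $m(j,\ell)$ so that $2^\ell \|w_{j,\ell}\|_{L^1(K)} \le 2^{-j-\ell}$ and set
\[
u_1 := \sum_{(j,\ell)\text{ exceptional}} 2^\ell\, w_{j,\ell},
\]
a decreasing limit of psh functions with $\|u_1\|_{L^1(K)} \le \sum 2^{-j-\ell} < \infty$. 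With $u := u_0 + u_1$, the pointwise bound $|\varphi(x)|^\alpha \le -u(x)$ is verified case-by-case: pick $(j,\ell)$ with $|\varphi(x)|^\alpha \in [2^\ell, 2^{\ell+1})$ and $-\varphi_q(x) \in [2^j, 2^{j+1})$; if non-exceptional, then $|\varphi(x)|^\alpha \le 2C_0(-\varphi_q(x))^{\alpha/2^q} \le -u_0(x)$, while if exceptional, then $w_{j,\ell}(x) \le -1$ gives $-u_1(x) \ge 2^\ell \ge |\varphi(x)|^\alpha$.

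The hardest part is the calibration in the previous paragraph: the constants $C_0$ and $m(j,\ell)$ must be chosen so that simultaneously the capacity estimate is small enough, the envelope $L^1$-norm decays geometrically in $(j,\ell)$, the series defining $u_1$ closes in $L^1(K)$, and the pointwise bound holds with the right constant (so that $C_0$ can be fixed independently of $(j,\ell)$). The algebraic identity $m/2^{q-1}=2m/2^q$ used above is exactly the improvement supplied by Lemma~\ref{ImpK} over the $q=1$ case of \cite{Vigny-Vu-Lebesgue}; it is what separates the $j$- and $\ell$-dependencies in the Chebyshev exponent and upgrades the admissible threshold from $2$ to $2^q$. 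The hypothesis $\alpha<2^q$ enters in two places: it is needed both for $u_0$ to be psh ($\alpha/2^q<1$) and for the ``gap'' $\ell/\alpha - j/2^q$ to admit a strictly positive lower bound on the exceptional region.
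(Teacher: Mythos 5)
Your construction is in the same spirit as the paper's (an explicit psh piece built from $\varphi_q$, plus a series of extremal functions for dyadic exceptional sets, with Lemma~\ref{ImpK} supplying the capacity decay), but the two-parameter stratification by $(j,\ell)$ and the \emph{sharp} threshold $\alpha/2^q$ in $u_0$ create a gap exactly at the step you flag as ``the hardest part''. Consider the near-diagonal exceptional cells, i.e.\ pairs with $2^\ell$ only slightly larger than $2C_0\,2^{j\alpha/2^q}$ but $j,\ell\to\infty$. For a \emph{fixed} $m$, your Chebyshev bound from Lemma~\ref{ImpK} gives $\capa_{\BT}(F_{j,\ell})\lesssim_m 2^{2m[(j+2)/2^q-\ell/\alpha]}$, and on these boundary cells the exponent is bounded below by a negative constant depending only on $C_0$, not on $(j,\ell)$. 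So the capacity bound does not decay along the boundary, and $\sum 2^\ell\|w_{j,\ell}\|_{L^1(K)}$ diverges. Taking $m=m(j,\ell)\to\infty$ to force decay is not available either: the constant $c_{m,K}$ in Lemma~\ref{ImpK} inherits the factor $\int|\varphi|^{2m}\,\omega^k$ from Lemma~\ref{Cor of M-T DMV}, which grows at least like $(2m)!$, and nothing in your argument shows the resulting $c_{m(j,\ell),K}^{1/(2k)}$ is beaten by the extra capacity decay. As written, the calibration does not close.

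The paper avoids this by using a \emph{single} index $n$ and, crucially, a \emph{strictly softer} threshold: fix $\lambda\in(\max(2^\alpha,2^{2^{q-1}}),2^{2^q})$ and set $K_n=\{\varphi\ge 2^n,\ \varphi_q\ge -\lambda^n\}$, $u=\sum_n 2^{n\alpha}\bigl(u_n^*+\max(\varphi_q,-\lambda^n)/\lambda^n\bigr)$. Because $\lambda<2^{2^q}$, i.e.\ $\log_2\lambda/2^{q-1}<2$, a single fixed $m$ already gives geometric decay $\capa(K_n)\lesssim 2^{nm(\log_2\lambda/2^{q-1}-2)}$, so the series converges; and the nonemptiness of the interval $(2^\alpha,2^{2^q})$ is precisely where $\alpha<2^q$ is used. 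The boundary cells that in your scheme remain ``exceptional'' with no capacity decay are, in the paper's scheme, \emph{not} in $K_n$ at all: they satisfy $\varphi_q<-\lambda^n$ and are controlled pointwise by the explicit term $2^{n\alpha}\max(\varphi_q,-\lambda^n)/\lambda^n=-2^{n\alpha}$, whose $L^1$ contribution $\lesssim 2^{n\alpha}/\lambda^n$ is summable since $\lambda>2^\alpha$. To repair your argument along similar lines you would need to replace $u_0=-C_0(-\varphi_q)^{\alpha/2^q}$ by $-C_0(-\varphi_q)^{\alpha/\log_2\lambda}$ for such an intermediate $\lambda$, which removes the problematic cells from the exceptional region. (Separately, a minor slip: your ``non-exceptional'' estimate gives $|\varphi|^\alpha<2^{\ell+1}\le 4C_0(-\varphi_q)^{\alpha/2^q}$, so $u_0$ would need the constant $4C_0$, not $C_0$; this is of course cosmetic.)
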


\begin{proof}
    The proof of this theorem is almost line by line with the proof of \cite[Theorem 1.3]{Vigny-Vu-Lebesgue} (just change the coefficient form $1$ to $q$). We only present the construction and refer the reader to the cited paper for details of the proof.

    Write $\varphi = \max(\varphi,0) + \min(\varphi,0)$, we can assume that $\varphi \geq 0$. Fix $\alpha \in [1,2^q)$ and choose a constant $ \lambda \in (\max (2^\alpha,2^{2^{q-1}}),2^{2^{q}})$. For $n\in\N$, we set
$$K_n=\Big\{z\in K:\ \varphi(z)\geq 2^n,\ \varphi_q\geq -\lambda^n\Big\}.$$
Define the extremal functions
\[u_n = u_{K_n}:= \sup\Big\{u\in\psh(\B):\ u\leq 0 \text{ on }\B,\ u\leq-1\text{ on } K_n\Big\}.\]

Let $(\varphi_1,\ldots,\varphi_q)$ be a defining sequence for $\varphi$ such that
\[\|\varphi\|_{*,q} = \|\varphi\|_{L^2} + \sum_{j=1}^q \|dd^c \varphi_j\|^{1/2^j}.\]
Assume that $\varphi_q\leq 0$. Define
$$u=\sum_{n=1}^\infty 2^{n\alpha}\Big( u_n^*+\frac{\max(\varphi_q,-\lambda^n)}{\lambda^n}\Big),$$
where $u_n^*$ is the usc regularization of $u_n$. This is our desired function.
\end{proof}

\begin{example}\label{examMoser}(Sharpness of the exponential $\alpha$)
As in \cite[page 13]{Vigny-Vu-Lebesgue}, we consider the case where $\alpha > 2^q$ and $k=1$. We choose the function $\varphi$ to be $(-\log |z|^2)^{1/2^q - \delta}$ where $\delta \in (0,1/2^q)$. Then by Example~\ref{mainexample}, $\varphi \in W^*_q$. Since $\alpha > 2^q$, we can choose $\delta$ small enough such that $\beta= \alpha(1/2^q-\delta)>1$. Thus, $\varphi^\alpha = (-\log |z|^2)^\beta$ and then $e^{c\varphi^\alpha} \geq c/|z|^2$ which is not locally integrable at $0$ in $\mathbb{C}$. Also, note that by arguments in \cite{Vigny-Vu-Lebesgue}, this function is not bounded from above by minus of a subharmonic function. So, the exponential coefficient here as well as the one in Theorem~\ref{theoremA} cannot be greater than $2^q$. It is natural to predict that the result still holds for $\alpha = 2^q$ (like in the case $q=1$), but currently, we do not know how to prove that. 
\end{example}

One can argue similarly to the proof of Theorem~\ref{main result} to obtain the following global version of psh bound. We refer the reader to \cite[Theorem 2.7]{Vigny-Vu-Lebesgue} for more details.
\begin{theorem}\label{main result global}
     Let $\varphi \in W^*_q(X)$ with $\|\varphi\|_{*,q}\leq 1$ and $\alpha \in[1,2^{q})$. Then there exist a strictly positive constant $C$ not depending on $\varphi$ and a negative $C\omega$-psh $u$ on $X$ such that
$$|\varphi|^\alpha \leq -u \qquad \text{and} \qquad\|u\|_{L^1(X)} \leq C.$$
\end{theorem}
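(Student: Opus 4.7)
The plan is to adapt the construction in Theorem~\ref{main result} to the compact K\"ahler setting, replacing extremal psh functions on $\B$ by extremal $M\omega$-psh functions on $X$. As in the local case, Example~\ref{example max min} lets us reduce to $\varphi\geq 0$. Fix $\alpha\in[1,2^q)$, choose $\lambda\in(\max(2^\alpha,2^{2^{q-1}}),\,2^{2^q})$, and take a defining sequence $((c_1,\varphi_1),\ldots,(c_q,\varphi_q))$ minimizing $\|\varphi\|_{*,q}$, normalized so that $\varphi_q\leq 0$ (subtracting $\sup_X\varphi_q$ does not affect the defining inequalities). Set
\begin{equation*}
K_n=\{x\in X:\varphi(x)\geq 2^n,\ \varphi_q(x)\geq -\lambda^n\},
\end{equation*}
let $u_n^*$ be the usc regularization of $\sup\{v\in\PSH(X,M\omega): v\leq 0 \text{ on } X,\ v\leq -1 \text{ on } K_n\}$ for a large fixed $M>0$, and define
\begin{equation*}
u=\sum_{n=1}^{\infty}2^{n\alpha}\left(u_n^*+\frac{\max(\varphi_q,-\lambda^n)}{\lambda^n}\right).
\end{equation*}

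The pointwise bound $|\varphi|^\alpha\leq -u$ is a dichotomy: if $2^n\leq\varphi(x)<2^{n+1}$, then either $\varphi_q(x)\geq-\lambda^n$, so $x\in K_n$ and $u_n^*(x)\leq-1$, or $\varphi_q(x)<-\lambda^n$, so $\max(\varphi_q,-\lambda^n)/\lambda^n=-1$; in either case the $n$-th summand is at most $-2^{n\alpha}$, which controls $\varphi(x)^\alpha$ up to a constant factor. Each summand is $C_0\omega$-psh with $C_0$ independent of $n$, so once convergence in $L^1(X)$ is established, $u$ is a negative $C\omega$-psh function with $C$ depending only on $(X,\omega)$ and $\alpha$.

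The main obstacle is the $L^1$ bound on $u$, which reduces via a global version of Lemma~\ref{compareBTA} to a lower bound on $\capa_\omega(K_n)$, equivalently an upper bound on the mass of $K_n$. The key inputs are the $I_{m,p}$- and $J_{m,p}$-type estimates of Lemma~\ref{ImpK} and Lemma~\ref{mpK}: although stated on the ball, their proofs use only Stokes' formula, Cauchy--Schwarz, and the defining-sequence inequalities, and they globalize readily on $X$ (the cutoff $\chi$ can be dropped since $X$ has no boundary, or one covers $X$ by finitely many coordinate balls and sums the local bounds). Using $\varphi\geq 2^n$ on $K_n$ together with $I_{m,k,X}\lesssim n^{m/2^{q-1}}$ and optimizing the choice of $m$ against $2^{2mn}$ gives exponential decay for the mass of $K_n$ fast enough to beat the weight $2^{n\alpha}$ precisely when $\alpha<2^q$, which yields $\sum_n 2^{n\alpha}\|u_n^*\|_{L^1(X)}<\infty$ and completes the proof.
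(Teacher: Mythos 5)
Your construction follows the right template (it is exactly the route the paper gestures at via \cite[Theorem 2.7]{Vigny-Vu-Lebesgue}), but there is a genuine gap in the way you pass from the summands to the conclusion that $u$ is $C\omega$-psh. You define $u_n^*$ as the relative extremal function in $\PSH(X,M\omega)$ for a \emph{fixed} $M$, so each summand $2^{n\alpha}\bigl(u_n^* + \max(\varphi_q,-\lambda^n)/\lambda^n\bigr)$ satisfies $dd^c(\cdot)\geq -2^{n\alpha}(M+c/\lambda^n)\,\omega$, and the total curvature budget is $\sum_n 2^{n\alpha}(M+c/\lambda^n)$, which diverges. The assertion that ``each summand is $C_0\omega$-psh with $C_0$ independent of $n$, so once convergence in $L^1(X)$ is established, $u$ is $C\omega$-psh'' is a non sequitur: on a compact K\"ahler manifold, unlike on the ball where $dd^c\geq 0$ is free, an infinite sum of $M\omega$-psh functions with weights $2^{n\alpha}$ is \emph{not} $C\omega$-psh for any finite $C$, regardless of $L^1$ convergence. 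The fix is precisely what the capacity estimate is for: one should instead use the normalized global extremal function $f_n := (V_{K_n}-M_n)/M_n$ with $M_n := \sup_X V_{K_n}$ (as in the proof of Theorem~\ref{equivcap}), which is $M_n^{-1}\omega$-psh, takes values in $[-1,0]$, and equals $-1$ q.e.\ on $K_n$. The small-capacity estimate for $K_n$ must then be used twice: via Lemma~\ref{compareBTA} it forces $M_n$ to grow super-exponentially, which simultaneously makes $\sum_n 2^{n\alpha}/M_n<\infty$ (ensuring $u$ is genuinely $C\omega$-psh) and $\sum_n 2^{n\alpha}\|f_n\|_{L^1}\lesssim\sum_n 2^{n\alpha}/M_n<\infty$ (the $L^1$ bound). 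With a fixed $M$ neither condition is met.

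Two smaller points. First, you say the reduction is to a \emph{lower} bound on $\capa_\omega(K_n)$; it is an \emph{upper} bound you need (small capacity forces $M_n$ large, which is what makes $f_n$ both nearly $\omega$-psh and $L^1$-small). Second, the claim that Lemmas~\ref{mpK} and~\ref{ImpK} ``globalize readily'' is plausible in outline but is not automatic: in the global defining inequality $d\varphi_{j-1}\wedge d^c\varphi_{j-1}\leq c_j\omega+dd^c\varphi_j$ the extra $c_j\omega$ term enters the Cauchy--Schwarz steps and must be carried through the induction, and the forms $\ddc v_j$ must be replaced by $\omega+\ddc v_j$ with $v_j\in\PSH(X,\omega)$, which changes the integration-by-parts identities. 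These issues are all resolvable, but they are not merely a matter of dropping the cutoff $\chi$, and the argument should acknowledge what changes.
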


This theorem implies the following global Moser-Trudinger inequality.

\begin{theorem}\label{theoremA'}
    Let $(X,\omega)$ be a compact K\"{a}hler manifold of dimension $k$. Let $q\geq 1$ and $\alpha \in[1,2^q)$. Let $v_1,\ldots,v_k$ be $\omega$-psh functions which are H\"{o}lder continuous of H\"{o}lder exponent $\beta$ with $\|v_j\|_{\mathscr{C}^\beta} \leq 1$ for $1\leq j \leq k$. Let $\varphi \in W^*_q(X)$ such that $\|\varphi\|_{*,q}\leq 1$. Then there exist strictly positive constants $c_1$ and $c_2$ depending on $X, \omega, \alpha$ and $\beta$ but independent of $\varphi,v_1,\ldots,v_k$ such that 
    $$\int_X e^{c_1|\varphi|^\alpha}(\omega+dd^c v_1)\wedge \cdots \wedge (\omega+dd^c v_k)\leq c_2.$$
\end{theorem}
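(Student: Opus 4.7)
The strategy is to mirror the proof of the local Moser-Trudinger inequality (Theorem~\ref{theoremA}): reduce the exponential integrability statement to a Skoda-type estimate for a single psh bound of $|\varphi|^\alpha$, but now on the compact manifold $X$ and against a product of H\"older continuous positive currents.

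First I would invoke Theorem~\ref{main result global} applied to $\varphi$: this produces a negative $C\omega$-psh function $u$ on $X$ with $|\varphi|^\alpha \leq -u$ and $\|u\|_{L^1(X)} \leq C$, where $C$ depends only on $X,\omega,\alpha$ and not on $\varphi$. As pointed out after Theorem~\ref{theoremA}, the set of non-Lebesgue points of $\varphi$ is pluripolar, and the measure $(\omega+dd^c v_1)\wedge\cdots\wedge(\omega+dd^c v_k)$ is well defined and carries no mass on pluripolar sets because the H\"older continuity of the $v_j$ allows the Bedford-Taylor product to be constructed and extended across such sets. Thus one obtains
\[
\int_X e^{c_1|\varphi|^\alpha}\,(\omega+dd^c v_1)\wedge\cdots\wedge(\omega+dd^c v_k) \leq \int_X e^{-c_1 u}\,(\omega+dd^c v_1)\wedge\cdots\wedge(\omega+dd^c v_k),
\]
and the problem is reduced to bounding the right-hand side uniformly.

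Next I would appeal to a global singular Skoda-type integrability theorem, in the spirit of \cite{DVS_exponential} (the same reference the authors use for Theorem~\ref{theoremA}). Concretely, using a finite covering of $X$ by coordinate balls, a partition of unity, and the fact that the $v_j$ remain H\"older continuous psh functions in each chart, the global integral can be controlled by a finite sum of local integrals of the form $\int_K e^{-c_1 u}\,dd^c v_1\wedge\cdots\wedge dd^c v_k$ with $v_j$ H\"older continuous of exponent $\beta$ and norm bounded by a constant depending only on the covering. Compactness of the set of normalized $C\omega$-psh functions in $L^1(X)$ then ensures that the bound on $u$ is uniform, and the singular Skoda-type theorem gives constants $c_1,c_2>0$, depending only on $X,\omega,\alpha,\beta,C$, such that each local integral is at most a uniform constant. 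Taking $c_1$ small enough to fit the Skoda threshold (which depends on $\beta$ and on the ambient $L^1$-bound of $u$), we conclude the desired inequality.

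The main obstacle is the singular Skoda-type estimate for the wedge product of H\"older continuous currents, since the $v_j$ need not have uniformly bounded potentials and the product $(\omega+dd^c v_1)\wedge\cdots\wedge(\omega+dd^c v_k)$ is only defined in the Bedford-Taylor sense. This is precisely the content of the results of \cite{DVS_exponential} that the authors already rely on in the local case, so I expect the remaining work to be mostly bookkeeping: localizing the problem by a finite chart covering, checking that the H\"older norms and the $L^1$-norm of $u$ transfer correctly to the charts, and tracking the dependence of $c_1$ on $\beta$ and on the uniform mass bound $C$ provided by Theorem~\ref{main result global}. Once these constants are aligned, uniformity in $\varphi$ and in the $v_j$ follows, yielding Theorem~\ref{theoremA'}.
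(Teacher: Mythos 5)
Your proposal matches the paper's intended approach: the authors state only that Theorem~\ref{main result global} implies Theorem~\ref{theoremA'}, leaving the deduction (apply the global psh bound, then invoke the singular Skoda-type integrability theorem of \cite{DVS_exponential} for H\"older continuous potentials, after localizing by a finite chart covering) implicit. You have correctly filled in that deduction, including the two bookkeeping points that matter—that $\omega$ has smooth local potentials so each $v_j$ becomes a H\"older psh function in charts, and that the $C\omega$-psh bound $u$ likewise becomes genuinely psh locally with controlled $L^1$ mass—so this is the same argument, just written out.
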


\section{Complex Monge-Amp\`{e}re equation}\label{sec:4}
In this section, we study the relationship between $W^*_q$ and the complex Monge-Amp\`{e}re operator.

\subsection{Global setting} 
Let $(X,\omega)$ be a compact K\"{a}hler manifold of dimension $k$. We recall some definitions from \cite{GZ-weighted}. Let $\varphi$ be some unbounded $\omega$-psh function on $X$ and consider $\varphi_j= \max (\varphi,-j)$ be the canonical approximation of $\varphi$ by bounded $\omega$-psh functions. By \cite{BT_fine_87}, we can define the Monge-Amp\`{e}re measure $(\omega + dd^c \varphi_j)^k$. The sequence of measures
\[\mathbf{1}_{\left \{ \varphi > -j \right \}}\left ( \omega+dd^c \varphi_j \right )^k\]
is an increasing sequence and converges to the non-pluripolar Monge-Amp\`{e}re measure $\mu_\varphi$ of $\varphi$. Its total mass $\mu_\varphi(X)$ can take any value in $\Big[0,\int_X \omega ^k \Big]$. Define
\[\mathcal{E}(X,\omega) = \Big \{ \varphi \in \PSH(X,\omega) : \mu_\varphi(X) = \int_X \omega^k\Big \}.\]

Recall the following criterion for functions in $\mathcal{E}(X,\omega)$.
\begin{lemma}\label{criE}\cite[Lemma 1.2]{GZ-weighted} Let $\varphi \in \PSH(X,\omega)$ and define $\varphi_j= \max (\varphi,-j)$ for $j\in\N$. Let $(s_j)$ be any sequence of real numbers converging to $\infty$, such that $s_j\leq j$ for all $j\in \mathbb{N}$. Then  the following conditions are equivalent:
\begin{itemize}
    \item[(1)] $\varphi \in \mathcal{E}(X,\omega)$;
    \item[(2)] $(\omega+dd^c \varphi_j)^k (\varphi \leq -j) \rightarrow 0$;
    \item[(3)] $(\omega+dd^c \varphi_j)^k (\varphi \leq -s_j) \rightarrow 0$.
\end{itemize}
\end{lemma}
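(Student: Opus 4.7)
The plan is to combine two facts recalled in the excerpt: the total Monge-Amp\`{e}re mass is preserved, $\int_X (\omega + dd^c \varphi_j)^k = \int_X \omega^k$ for every $j$, and the sequence $\mathbf{1}_{\{\varphi > -j\}}(\omega + dd^c \varphi_j)^k$ increases with $j$ to the non-pluripolar measure $\mu_\varphi$. I would establish (1)$\Leftrightarrow$(2) directly from mass conservation, note that (3)$\Rightarrow$(2) is immediate, and treat (2)$\Rightarrow$(3) via a decomposition trick that isolates the contribution of the pluripolar set $\{\varphi=-\infty\}$.

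For (1)$\Leftrightarrow$(2), mass conservation gives
\begin{equation*}
(\omega + dd^c \varphi_j)^k\bigl(\{\varphi \leq -j\}\bigr) = \int_X \omega^k - \int_X \mathbf{1}_{\{\varphi > -j\}}(\omega + dd^c \varphi_j)^k,
\end{equation*}
and as $j\to\infty$ the right-hand side tends to $\int_X \omega^k - \mu_\varphi(X)$ by monotone convergence. This limit vanishes if and only if $\varphi \in \mathcal{E}(X,\omega)$, which yields the equivalence.

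The implication (3)$\Rightarrow$(2) is immediate from $\{\varphi \leq -j\} \subset \{\varphi \leq -s_j\}$, valid because $s_j \leq j$. The main obstacle is (2)$\Rightarrow$(3). I would split
\begin{equation*}
\{\varphi \leq -s_j\} = \{\varphi \leq -j\} \sqcup \{-j < \varphi \leq -s_j\}.
\end{equation*}
The first piece has $(\omega+dd^c\varphi_j)^k$-mass going to $0$ by hypothesis. On the second piece $\varphi_j$ coincides with $\varphi$, so by the plurifine locality of the Monge-Amp\`{e}re operator (the same locality that underlies the definition of $\mu_\varphi$ recalled in the excerpt) one has
\begin{equation*}
(\omega + dd^c \varphi_j)^k\bigl(\{-j < \varphi \leq -s_j\}\bigr) = \mu_\varphi\bigl(\{-j < \varphi \leq -s_j\}\bigr) \leq \mu_\varphi\bigl(\{\varphi \leq -s_j\}\bigr).
\end{equation*}

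Since $s_j \to \infty$ and $\mu_\varphi$ does not charge the pluripolar set $\{\varphi = -\infty\}$, the right-hand side decreases to $0$. Adding the two pieces and letting $j \to \infty$ then yields (3). The only subtle point is the locality identity on the non-open set $\{-j < \varphi \leq -s_j\}$; this is exactly the Bedford--Taylor plurifine locality statement already used to define $\mu_\varphi$, so nothing beyond the tools recalled in the excerpt is needed.
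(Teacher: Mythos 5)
The paper does not supply its own proof of this lemma; it is imported directly from Guedj--Zeriahi (cited as \cite[Lemma 1.2]{GZ-weighted}), so there is nothing internal to compare against. Your argument is correct and is, to the best of my knowledge, the standard one from that reference: $(1)\Leftrightarrow(2)$ from the cohomological mass identity $\int_X(\omega+dd^c\varphi_j)^k=\int_X\omega^k$ combined with the monotone definition of $\mu_\varphi$; $(3)\Rightarrow(2)$ from $s_j\le j$; and $(2)\Rightarrow(3)$ by splitting $\{\varphi\le -s_j\}$ at level $-j$ and invoking plurifine locality on $\{\varphi>-j\}$ to rewrite the second piece as a $\mu_\varphi$-mass, which tends to $0$ because $\mu_\varphi$ is non-pluripolar. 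One small point worth spelling out: the identity $(\omega+dd^c\varphi_j)^k\big(\{-j<\varphi\le -s_j\}\big)=\mu_\varphi\big(\{-j<\varphi\le -s_j\}\big)$ follows from the full measure equality $\mathbf{1}_{\{\varphi>-j\}}(\omega+dd^c\varphi_j)^k=\mathbf{1}_{\{\varphi>-j\}}\mu_\varphi$ (obtained by applying plurifine locality on $\{\varphi>-j\}$ to every $\varphi_{j'}$, $j'\ge j$, and passing to the increasing limit), and then restricting that equality of measures to the Borel subset $\{-j<\varphi\le -s_j\}\subset\{\varphi>-j\}$; the locality theorem itself is only stated on plurifine open sets, so this intermediate step makes the passage to a non-open set legitimate.
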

Now, we prove that any $\omega$-psh complex Sobolev function belongs to $\mathcal{E}(X,\omega)$.

\begin{proof}[Proof of Theorem~\ref{MAglobal}(1)]
    Assume that $\varphi$ is a negative $\omega$-psh function in $W^*_1(X)$ and $\varphi_j = \max(\varphi,-j)$. We note that $\varphi_j/j $ equals to $ -1$ when $\varphi \leq -j$ and equals to $\varphi/j < 0$ when $\varphi > -j$. Let $T_\varphi$ be a positive closed current on $X$ such that $d\varphi\wedge d^c\varphi\leq T_\varphi.$ By  Example~\ref{example max min}, we have $d\varphi_j\wedge d^c\varphi_j\leq T_\varphi.$
    By Stokes' formula, we have 
\begin{align*} 
\int_{\{\varphi \leq -j\}} (\omega+dd^c\varphi_j)^k &\leq -\int_{X}\frac{\varphi_j}{j} (\omega+dd^c\varphi_j)^k \\
&=\frac{1}{j} \Big(\int_X d\varphi_j \wedge d^c \varphi_j \wedge \sum_{m=0}^{k-1} (\omega + dd^c \varphi_j)^{k-1-m}\wedge \omega ^m - \int_X \varphi_j \omega^k\Big) \\
&\leq \frac{1}{j} \Big(\int_X T_\varphi \wedge \sum_{m=0}^{k-1} (\omega + dd^c \varphi_j)^{k-1-m}\wedge \omega ^m - \int_X \varphi \omega^k\Big)\cdot
\end{align*}
Since $T_\varphi$ is closed, the first integral doesn't change if we replace the closed current $\sum\limits_{m=0}^{k-1} (\omega + dd^c \varphi_j)^{k-1-m} \wedge \omega^m$ by a closed form in its de Rham cohomology class. We can replace it by $k\omega^{k-1}$ and obtain
\begin{align*}
\int_{\{\varphi \leq -j\}} (\omega+dd^c\varphi_j)^k\leq \frac{k\cdot  \|T_\varphi\|_X}{j} - \frac{1}{j} \Big(\int_X \varphi \omega^k \Big) \rightarrow 0 \text{ as } j\rightarrow \infty\cdot
\end{align*}
This, combined with Lemma~\ref{criE}, finishes our proof.
\end{proof}

Next, we recall the definition of finite energy classes in \cite{GZ-weighted}. For simplicity, assume that $\int_X \omega^ k =1$ and denote $\omega_\varphi := \omega + dd^c \varphi$. Let $p>0$ and $\varphi$ be a bounded $\omega$-psh functions, one can define the energy functional
\[E_p(\varphi) = -\frac{1}{k+1} \sum_{m=0}^{k} \int_X (-\varphi)^p (\omega_\varphi)^m \wedge \omega^{k-m}\cdot\]
We can extend this functional for arbitrary $\omega$-psh functions by canonical approximation
\[E_p(\varphi) = \lim_{j\rightarrow \infty} E_p(\varphi_j).\]
The finite energy class is defined as follows
\[\mathcal{E}^p(X,\omega) = \left \{ \varphi\in \mathcal{E}(X,\omega): E_p(\varphi)>-\infty \right \}.\]

To prove Theorem~\ref{MAglobal}(2), we first recall some facts about finite energy classes. We only use the results for the class $\mathcal{E}^p(X,\omega)$ with $p\geq1$. For more general classes and the proof of these propositions, we refer the readers to \cite{GZ-weighted} and \cite{Vu_Do-MA}.
\begin{proposition}
    Let $p\geq 1$ and  $\varphi,\psi $ be bounded non-positive $\omega$-psh functions. Then for every positive closed current $T$ of bi-dimension $(1,1)$, we have
    \[0\leq \int_X(-\varphi)^p \omega_\psi\wedge T \leq 2p\int_X (-\varphi)^p \omega_\varphi\wedge T + 2p \int_X (-\psi)^p\omega_\psi\wedge T.\]
\end{proposition}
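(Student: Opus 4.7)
Throughout, set $A := \int_X(-\varphi)^p\omega_\psi\wedge T$, $C := \int_X(-\varphi)^p\omega_\varphi\wedge T$, and $D := \int_X(-\psi)^p\omega_\psi\wedge T$, and reduce to the case where $\varphi$ and $\psi$ are smooth by standard approximation of bounded $\omega$-psh functions by decreasing sequences of smooth quasi-psh functions (the integrals converging by Bedford--Taylor theory). The lower bound $A \geq 0$ is immediate from $(-\varphi)^p \geq 0$ and the positivity of $\omega_\psi\wedge T$.

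For the upper bound, the key is integration by parts against the closed current $T$. Writing $\omega_\psi = \omega + dd^c\psi$ and applying Stokes together with the expansion $dd^c((-\varphi)^p) = -p(-\varphi)^{p-1}dd^c\varphi + p(p-1)(-\varphi)^{p-2}d\varphi\wedge d^c\varphi$, the term proportional to $d\varphi\wedge d^c\varphi$ becomes nonpositive once multiplied by $\psi \leq 0$ and is discarded. A parallel Stokes calculation shows $\int_X(-\varphi)^p\omega\wedge T \leq C$, using $\int_X(-\varphi)^p dd^c\varphi\wedge T = p\int_X(-\varphi)^{p-1} d\varphi\wedge d^c\varphi\wedge T \geq 0$ for $p \geq 1$. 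Putting the pieces together, $A \leq C + p\int_X(-\psi)(-\varphi)^{p-1}\omega_\varphi\wedge T$.

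To close the estimate, I apply Young's inequality with a parameter $\lambda \in (0,1)$ and conjugate exponents $p, p/(p-1)$ to the product $(-\psi)(-\varphi)^{p-1}$:
$p(-\psi)(-\varphi)^{p-1} \leq \lambda(-\psi)^p + (p-1)\lambda^{-1/(p-1)}(-\varphi)^p$,
yielding $A \leq [1+(p-1)\lambda^{-1/(p-1)}]\,C + \lambda B$ with $B := \int_X(-\psi)^p\omega_\varphi\wedge T$. Running the identical argument with the roles of $\varphi$ and $\psi$ swapped and a fresh parameter $\mu$ gives $B \leq [1+(p-1)\mu^{-1/(p-1)}]\,D + \mu A$. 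Substituting and requiring $\lambda\mu < 1$,
$A(1-\lambda\mu) \leq [1+(p-1)\lambda^{-1/(p-1)}]\,C + \lambda[1+(p-1)\mu^{-1/(p-1)}]\,D$,
and a calibrated choice of $\lambda,\mu$ (for example $\lambda=\mu$ tuned so that both coefficients divided by $1-\lambda\mu$ are at most $2p$) yields the claim for $p>1$. The Young step degenerates at $p=1$, so there I would instead estimate the mixed term directly by Cauchy--Schwarz, $\int_X d\varphi\wedge d^c\psi\wedge T \leq \tfrac12 \int_X (d\varphi\wedge d^c\varphi + d\psi\wedge d^c\psi)\wedge T \leq \tfrac12(C+D)$, which gives $A \leq \tfrac32 C + \tfrac12 D$, comfortably within the constant $2$.

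The main obstacle I anticipate is precisely the parameter optimization in the Young step: the naive choice $\lambda = \mu = \tfrac12$ narrowly misses the constant $2p$ when $p$ is slightly above $1$, so some careful tuning (or alternatively a single sharper H\"older estimate applied to $\int_X(-\psi)(-\varphi)^{p-1}\omega_\varphi\wedge T$ before Young) is needed to land exactly on the advertised constant $2p$.
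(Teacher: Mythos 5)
The paper does not supply its own proof of this proposition; it refers the reader to the cited references for the argument, so there is no in-text proof to match against. Your setup is correct up to the last step: moving $dd^c$ onto $(-\varphi)^p$, discarding the nonpositive term $p(p-1)\psi(-\varphi)^{p-2}\,d\varphi\wedge d^c\varphi\wedge T$, and noting $\int_X(-\varphi)^p\omega\wedge T\leq C$ together give $A\leq C+p\int_X(-\psi)(-\varphi)^{p-1}\omega_\varphi\wedge T$, and your pointwise Young inequality is algebraically valid.

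The gap is in the closing substitution, and it is structural rather than a matter of ``careful tuning.'' You arrive at $A(1-\lambda\mu)\leq\bigl[1+(p-1)\lambda^{-1/(p-1)}\bigr]C+\lambda\bigl[1+(p-1)\mu^{-1/(p-1)}\bigr]D$ and need both coefficients, after dividing by $1-\lambda\mu$, to be at most $2p$. For $p$ in a neighborhood of $1$ this is impossible for \emph{every} $\lambda,\mu>0$ with $\lambda\mu<1$. Take $p=3/2$: the coefficient of $C$ being $\leq 2p$ forces $\lambda\geq 1/2$ and $\lambda\mu\leq (2-\tfrac{1}{2}\lambda^{-2})/3$, while the coefficient of $D$ being $\leq 2p$ forces $\lambda\bigl[1+\tfrac{1}{2}\mu^{-2}+3\mu\bigr]\leq 3$, whose bracket has minimum about $4.12$ (at $\mu\approx 3^{-1/3}$), so $\lambda\leq 0.73$; over the whole admissible range $\lambda\in[1/2,0.73]$ the two bounds on $\mu$ are incompatible (e.g.\ at $\lambda\approx 0.87$, the largest $\mu^*$ allowed by the first constraint is about $0.51$, where the bracket is about $4.4>3/\lambda$). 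The root cause is that your scheme bounds $A$ by $B=\int_X(-\psi)^p\omega_\varphi\wedge T$ and $B$ by $A$, and the loop does not contract; no choice of $\lambda,\mu$ repairs that.

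What does work is what you already do for $p=1$, extended to $p>1$: integrate by parts \emph{once} to get $\int_X(-\varphi)^p\,dd^c\psi\wedge T=p\int_X(-\varphi)^{p-1}d\varphi\wedge d^c\psi\wedge T$, apply Cauchy--Schwarz with respect to the positive measure $(-\varphi)^{p-1}T$ to obtain the product $\bigl(\int_X(-\varphi)^{p-1}d\varphi\wedge d^c\varphi\wedge T\bigr)^{1/2}\bigl(\int_X(-\varphi)^{p-1}d\psi\wedge d^c\psi\wedge T\bigr)^{1/2}$, and then control the mixed weight in the second factor by comparing $(-\varphi)^{p-1}$ with $(-\psi)^{p-1}$ through the monotonicity of $t\mapsto p(-t)^{p-1}$ and the auxiliary $\omega$-psh function $\max(\varphi,\psi)$ (this is essentially the route of Guedj--Zeriahi's weighted-energy lemma, with $\chi(t)=-(-t)^p$ concave increasing). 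That argument never produces the $(A,B)$ loop and lands on a universal constant. As written, your proof is incomplete: the part you flagged as a technical optimization is actually where the approach fails.
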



\begin{proposition}\label{mixEp}
    Let $p\geq 1$ and $\varphi_0,\dots,\varphi_k $ be bounded non-positive $\omega$-psh functions. Then there exists a strictly positive constant $C_p$ depending only on $p$ such that
    \[\int_X (-\varphi_0)^p \omega_{\varphi_1} \wedge \cdots \wedge \omega_{\varphi_k} \leq C_p \max_{0\leq m\leq k} \Big ( \int_{X}(-\varphi_m)^p\omega_{\varphi_m}^k \Big ).\]
\end{proposition}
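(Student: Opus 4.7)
The plan is to reduce the mixed Monge-Amp\`ere integral to a linear combination of the pure integrals $H_m:=\int_X (-\varphi_m)^p\omega_{\varphi_m}^k$ by iteratively applying the preceding proposition. Set $M:=\max_{0\le m\le k} H_m$ and, for any tuple $(\alpha;\beta_1,\ldots,\beta_k)\in \{0,1,\ldots,k\}^{k+1}$, introduce the shorthand
\[\Phi(\alpha;\beta_1,\ldots,\beta_k):=\int_X (-\varphi_\alpha)^p\, \omega_{\varphi_{\beta_1}}\wedge\cdots\wedge\omega_{\varphi_{\beta_k}},\]
so that the goal is $\Phi(0;1,2,\ldots,k)\le C_p M$. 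The atomic move comes from applying the preceding proposition to the $j$-th $\omega$-factor, taking $T:=\omega_{\varphi_{\beta_1}}\wedge \cdots \wedge \widehat{\omega_{\varphi_{\beta_j}}} \wedge \cdots \wedge \omega_{\varphi_{\beta_k}}$, $\varphi:=\varphi_\alpha$, and $\psi:=\varphi_{\beta_j}$; this yields
\[\Phi(\alpha;\beta_1,\ldots,\beta_k)\le 2p\,\Phi(\alpha;\beta_1,\ldots,\alpha,\ldots,\beta_k) + 2p\,\Phi(\beta_j;\beta_1,\ldots,\beta_k),\]
in which the first summand promotes slot $j$ from $\beta_j$ to $\alpha$ (gaining one factor matching the exponent) and the second re-indexes only the exponent to $\beta_j$.

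I would then organize a double induction on the lexicographic pair $(s,d)$, where $s:=|\{\alpha,\beta_1,\ldots,\beta_k\}|$ counts the distinct indices appearing and $d:=\#\{j : \beta_j\ne\alpha\}$ counts the mismatched slots. The base case $(1,0)$ returns the pure integral $\Phi(\alpha;\alpha,\ldots,\alpha)=H_\alpha\le M$ directly. For the inductive step I would select a slot $j$ with $\beta_j\ne\alpha$ such that $\beta_j$ is the most frequent index value in $(\beta_1,\ldots,\beta_k)$ distinct from $\alpha$: this heuristic ensures that the new reference exponent $\beta_j$ in the second summand is already matched by as many slots as possible, keeping the associated mismatch count small, while in the first summand $d$ drops strictly by one.

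The hardest step will be verifying that both summands strictly decrease in the lexicographic order. For the first summand this is immediate; for the exponent-swap summand, the mismatch count relative to the new exponent $\beta_j$ can a priori exceed the original $d$, and one must argue either that $s$ strictly drops (which happens precisely when $\beta_j$ occurred only at position $j$, so its value disappears from the tuple) or combine two successive applications of the preceding proposition to force net progress. This delicate case analysis on the distribution of $\{\alpha,\beta_1,\ldots,\beta_k\}$, presumably carried out in \cite{GZ-weighted} and \cite{Vu_Do-MA}, closes the induction and produces $C_p$ as an explicit polynomial in $2p$ whose degree is bounded by the maximal starting complexity $(s,d)=(k+1,k)$.
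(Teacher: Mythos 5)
The paper does not actually prove Proposition \ref{mixEp}; it refers to \cite{GZ-weighted} and \cite{Vu_Do-MA} for the proof, so there is no in-house argument to compare against. Judged on its own terms, your proposal has a genuine gap at exactly the spot you flag as the ``hardest step''.

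Your atomic move is correct: applying the preceding proposition with $\varphi=\varphi_\alpha$, $\psi=\varphi_{\beta_j}$ and $T=\bigwedge_{i\ne j}\omega_{\varphi_{\beta_i}}$ does yield the two-term bound you wrote. The problem is that the lexicographic pair $(s,d)$ does \emph{not} strictly decrease for the exponent-swap summand, and your proposed fixes do not repair this. Take $k=3$, $\alpha=0$, and $(\beta_1,\beta_2,\beta_3)=(0,1,2)$, so $(s,d)=(3,2)$. Every admissible slot has $\beta_j\in\{1,2\}$ of multiplicity one, and the swap produces $\Phi(1;0,1,2)$ or $\Phi(2;0,1,2)$, both again with $(s,d)=(3,2)$. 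Your escape route --- that $s$ strictly drops ``when $\beta_j$ occurred only at position $j$, so its value disappears from the tuple'' --- misreads the move: in the exponent-swap summand the tuple $(\beta_1,\ldots,\beta_k)$ is unchanged, so the value $\beta_j$ never leaves it; $s$ drops only when the \emph{old} exponent $\alpha$ is absent from $(\beta_1,\ldots,\beta_k)$, which is precisely what fails here. Iterating further just cycles the exponent through $\{0,1,2\}$: after two more swaps the bad branch returns to $\Phi(0;0,1,2)$ with an accumulated factor $(2p)^3\ge 8$, and since $p\ge 1$ you cannot close the resulting self-referential inequality $\Phi_0\le(2p)^3\Phi_0+\cdots$. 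So ``combine two successive applications to force net progress'' does not work either. The inductive scheme needs a different invariant (or a different reduction altogether, as in the cited references); a lexicographic well-ordering on $(s,d)$ is structurally the wrong one because the atomic move is not monotone for it.
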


We will use the following direct corollary.

\begin{corollary}\label{compareEp}
    Let $\varphi$ and $\psi$ be non-positive functions in $\mathcal{E}^p(X,\omega)$ with $p\geq1$. Then for $0\leq m \leq k-1$, there exists a strictly positive constant $C_p$ depending only on $p$ such that
    \[\int_X (-\varphi)^p\omega_\psi \wedge \omega_\varphi^{k-m-1}\wedge \omega^m \leq C_p\Big(\int_{X}(-\varphi)^p\omega_{\varphi}^k+\int_{X}(-\psi)^p\omega_{\psi}^k\Big).\]
\end{corollary}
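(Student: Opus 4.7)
The statement is advertised as a direct corollary of Proposition~\ref{mixEp}, so the plan is essentially to specialize the $(k+1)$-tuple of $\omega$-psh functions in the proposition, observe that $\omega$ itself corresponds to taking the zero function (since $\omega = \omega_0$), and then collapse the resulting max into a sum.

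First, I would reduce to the bounded case. Assume for now that $\varphi$ and $\psi$ are bounded and non-positive. Apply Proposition~\ref{mixEp} with the choice $\varphi_0 = \varphi$, $\varphi_1 = \psi$, $\varphi_2 = \cdots = \varphi_{k-m} = \varphi$, and $\varphi_{k-m+1} = \cdots = \varphi_k = 0$. The mixed Monge--Amp\`ere product on the left becomes precisely $\omega_\psi \wedge \omega_\varphi^{k-m-1} \wedge \omega^m$, and the proposition yields
\[
\int_X (-\varphi)^p \omega_\psi \wedge \omega_\varphi^{k-m-1} \wedge \omega^m \le C_p \max\Big\{ \int_X (-\varphi)^p \omega_\varphi^k,\ \int_X (-\psi)^p \omega_\psi^k,\ \int_X 0^p \omega^k \Big\}.
\]
The third term vanishes identically, and the max of the two remaining non-negative quantities is bounded above by their sum, which gives the desired inequality in the bounded case.

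To pass from bounded $\omega$-psh functions to functions in $\mathcal{E}^p(X,\omega)$, I would use the canonical approximation $\varphi_j := \max(\varphi, -j)$ and $\psi_j := \max(\psi, -j)$, which are bounded and non-positive. Apply the bounded case to each pair $(\varphi_j, \psi_j)$ and obtain
\[
\int_X (-\varphi_j)^p\, \omega_{\psi_j} \wedge \omega_{\varphi_j}^{k-m-1}\wedge \omega^m \le C_p \Big(\int_X (-\varphi_j)^p \omega_{\varphi_j}^k + \int_X (-\psi_j)^p \omega_{\psi_j}^k\Big).
\]
Now let $j \to \infty$. On the right-hand side, by the very definition of $\mathcal{E}^p(X,\omega)$ through the energy functional $E_p$ and its convergence under canonical approximations (as recalled from \cite{GZ-weighted}), the two integrals converge to $\int_X(-\varphi)^p \omega_\varphi^k$ and $\int_X(-\psi)^p \omega_\psi^k$, respectively. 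On the left-hand side, Fatou's lemma (applied after extracting a weakly convergent subsequence of the mixed currents, which converge by the continuity of the non-pluripolar product on $\mathcal{E}^p$) gives that the limit is at least $\int_X (-\varphi)^p \omega_\psi \wedge \omega_\varphi^{k-m-1} \wedge \omega^m$. Combining these limits delivers the stated estimate.

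The only non-routine step is the limit passage in the approximation, but since both $\varphi, \psi \in \mathcal{E}^p(X,\omega)$ with $p \ge 1$, all the convergences involved are part of the well-established machinery for finite energy classes, so I expect no genuine obstacle here; the entire argument should fit in a few lines once the specialization of Proposition~\ref{mixEp} is written down.
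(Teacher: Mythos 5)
Your specialization of Proposition~\ref{mixEp} to $\varphi_0=\varphi$, $\varphi_1=\psi$, $\varphi_2=\cdots=\varphi_{k-m}=\varphi$, $\varphi_{k-m+1}=\cdots=\varphi_k=0$ and the bound of the max by the sum are exactly the paper's starting point, so the bounded case is in order. The limit passage, however, has a genuine gap. You bound
$$\int_X (-\varphi_j)^p\,\omega_{\psi_j}\wedge\omega_{\varphi_j}^{k-m-1}\wedge\omega^m$$
and want to conclude $\liminf_j$ of this is $\geq \int_X(-\varphi)^p\,\omega_\psi\wedge\omega_\varphi^{k-m-1}\wedge\omega^m$. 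But here both the integrand and the measure vary with $j$, and ``Fatou on a weakly convergent subsequence'' does not apply in that situation: the Fatou-type lower semicontinuity $\liminf_j\int f\,d\mu_j\geq\int f\,d\mu$ under weak convergence requires the test function $f$ to be lower semicontinuous and \emph{fixed}, and even then $(-\varphi_{j_0})^p$ is only bounded quasi-continuous, not continuous, so plain weak convergence of the MA measures is insufficient; one needs the convergence to respect quasi-continuous integrands, as in \cite[Theorem 10.18]{GZbook}. Moreover, the monotonicity $(-\varphi_j)^p\nearrow(-\varphi)^p$ points in the \emph{wrong} direction to replace $(-\varphi_j)^p$ by $(-\varphi)^p$ inside the integral against $\mu_j$.

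The paper fixes this by decoupling the two approximations into independent indices: it freezes $(-\varphi_j)^p$ (bounded quasi-continuous) and first lets $j'\to\infty$ in $\omega_{\psi_{j'}}\wedge\omega_{\varphi_{j'}}^{k-m-1}\wedge\omega^m$ via \cite[Theorem 10.18]{GZbook}, bounding the intermediate quantities by $\int(-\varphi)^p\omega_\varphi^k$ and $\int(-\psi)^p\omega_\psi^k$ using the monotonicity of the $E_p$-energy along canonical approximations; then $j\to\infty$ follows by monotone convergence. Your single-index version can be repaired by inserting the same two-step argument: first note $\int(-\varphi_{j_0})^p\,d\mu_j\leq\int(-\varphi_j)^p\,d\mu_j$ for $j\geq j_0$, then let $j\to\infty$ with $j_0$ fixed using the quasi-continuous convergence theorem, then send $j_0\to\infty$. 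As written, though, the appeal to Fatou plus weak convergence does not close the argument.
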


\noindent Here the measure $\omega_\psi \wedge \omega_\varphi^{k-m-1} \wedge \omega^m$ is defined in the non-pluripolar sense (see \cite[Chapter 10.2.3]{GZbook}.
\begin{proof}
Let $\varphi_j = \max(\varphi,-j)$ and $\psi_j = \max (\psi,-j)$. By \cite[Theorem 10.18]{GZbook},
\[\int_X (-\varphi_j)^p\omega_\psi \wedge \omega_\varphi^{k-m-1}\wedge \omega^m = \lim\limits_{j'\rightarrow \infty} \int_X (-\varphi_j)^p\omega_{\psi_{j'}} \wedge \omega_{\varphi_{j'}}^{k-m-1}\wedge \omega^m.\]
By Proposition~\ref{mixEp}, we have
\begin{align*}\int_X (-\varphi_j)^p\omega_{\psi_{j'}} \wedge \omega_{\varphi_{j'}}^{k-m-1}\wedge \omega^m &\leq C_p\Big(\int_{X}(-\varphi_j)^p\omega_{\varphi_j}^k+\int_{X}(-\varphi_{j'})^p\omega_{\varphi_{j'}}^k+\int_{X}(-\psi_{j'})^p\omega_{\psi_{j'}}^k \Big) \\
&\leq C_p\Big(\int_{X}(-\varphi)^p\omega_{\varphi}^k+\int_{X}(-\psi)^p\omega_{\psi}^k\Big).\end{align*}
Letting $j' \rightarrow \infty$ and then let $j\rightarrow \infty$ give us the result.
\end{proof}
\begin{proof}[Proof of Theorem~\ref{MAglobal}(2)]
    We prove this by induction. First, consider the case where $q=2$. Let $\psi$ be the function in $W^*_1(X)$ such that $d\varphi \wedge d^c \varphi \leq C(\omega + dd^c\psi)$. By Theorem~\ref{MAglobal}(1), we have $\varphi,\psi \in \mathcal{E}(X,\omega)$. By Stokes' formula, we have
    \begin{align*}\int_X d\varphi \wedge d^c \varphi \wedge \omega_\varphi^l \wedge \omega^{k-l-1} &= \int_X \varphi dd^c \varphi \wedge \omega_\varphi^l \wedge \omega^{k-l-1} \\ &= \int_X \varphi \omega_\varphi^{l+1}\wedge \omega^{k-l-1} - \int_X \varphi \omega_{\varphi}^{l}\wedge \varphi^{k-l}.
\end{align*}
    Thus, we can control the energy of $\varphi$ through $\psi$:
    \begin{align*} -E_1(\varphi) &= \frac{1}{k+1} \sum_{m=0}^k \int_X (-\varphi) \omega_\varphi^m \wedge \omega^{k-m}  \\ &= \int_X (-\varphi) \omega^{k} +\frac{1}{k+1} \sum_{m=1}^k \sum_{l=0}^{m-1} \int_X d\varphi \wedge d^c \varphi \wedge \omega_\varphi^l \wedge \omega^{k-l-1} \\&\leq \int_X (-\varphi) \omega^{k} + \frac{C}{k+1} \sum_{m=1}^k \sum_{l=0}^{m-1} \int_X \omega_{\psi} \wedge \omega_\varphi^l \wedge \omega^{k-l-1} \\
&= \int_X (-\varphi) \omega^{k} + \frac{C}{k+1} \sum_{m=1}^k \sum_{l=0}^{m-1} 1 < \infty\cdot
\end{align*}
So, $\varphi \in \mathcal{E}^1(X,\omega)$.

Suppose that the theorem is true for $q-1$ with $q>2$. Let $\psi$ be an $\omega$-psh function in $W^*_{q-1}(X)$ such that $d\varphi\wedge d^c \varphi \leq C(\omega +dd^c \psi)$. By the induction hypothesis, we have $\varphi,\psi \in \mathcal{E}^{q-2}(X,\omega)$.
We observe that
\[dd^c((-\varphi)^{q}/q) = (q-1)(-\varphi)^{q-2}d\varphi\wedge d^c \varphi - (-\varphi)^{q-1}dd^c \varphi.\]
By using Stokes's formula, we can write
\begin{align*} \int_X (-\varphi)^{q-1} dd^c\varphi\wedge T &= (q-1)\int_X (-\varphi)^{q-2} d\varphi \wedge d^c \varphi \wedge T - \int_X dd^c((-\varphi)^q/q)\wedge T \\
&=(q-1)\int_X(-\varphi)^{q-2}d\varphi\wedge d^c\varphi\wedge T,
\end{align*}
for $T$ is a sufficiently regular positive closed current of bi-dimension $(1,1)$. This implies that
\begin{align*}-E_{q-1}(\varphi)&=\frac{1}{k+1}\sum_{m=0}^k \int_X(-\varphi)^{q-1}(\omega_\varphi)^m\wedge \omega^{k-m} \\
&= \int_X (-\varphi)^{q-1} \omega^k+ \frac{q-1}{k+1}\sum_{m=1}^k\sum_{l=0}^{m-1} \int_X(-\varphi)^{q-2}d\varphi\wedge d^c\varphi\wedge(\omega_\varphi)^l\wedge \omega^{k-l-1} \\
&\leq \int_X (-\varphi)^{q-1} \omega^k+ \frac{C(q-1)}{k+1}\sum_{m=1}^k\sum_{l=0}^{m-1} \int_X(-\varphi)^{q-2}\omega_\psi\wedge(\omega_\varphi)^l\wedge \omega^{k-l-1}\\
&< \infty\end{align*}
by applying Corollary~\ref{compareEp} for $p=q-2$ and the induction hypothesis. So $\varphi \in \mathcal{E}^{q-1}(X,\omega)$ and we complete our proof.
\end{proof}

\begin{remark}
    The above theorem provides a lower bound for $p(q)$ by $q-1$. From Example~\ref{examMoser} and \cite[Theorem 2.1]{DGLfinite-entropy}, we see that $p(q)$ has an upper bound by $k(2^q-1)$. It is an interesting question to know which is the best option for $p(q)$. One can also ask if the envelope method in \cite{DGLfinite-entropy} can be used to give a new proof for the Moser-Trudinger inequalities. It has been noted in the beginning of Section~\ref{sec:3}, as a consequence of bounded mean oscillation property, there are constants $c$ and $A$ such that $\int_X e^{c|\varphi|} \omega^k \leq A$ for all $\varphi$ in $W^*_q(X)$ with $\|\varphi\|_{*,q}\leq 1$. This fact can be used in place of Skoda's integrability theorem in the proof of Theorem 2.1 in \cite{DGLfinite-entropy}.
\end{remark}

\subsection{Local setting}

The following result is the key point in our proof of Theorem~\ref{localMA}.
\begin{proposition}\label{key prop for D}
    Let $1\leq p\leq k-1$ and $0\leq m\leq k-p$. Let $q_1,\ldots,q_{p+1}$ be integers satisfying $q_1 \leq \cdots \leq q_{p+1}$, $q_j \geq p-1 +m$ for $1\leq j \leq p$, and $q_p \leq q_1 +1$.
     Let $\varphi_1,\ldots,\varphi_{p+1}$ be negative smooth psh functions on $\Omega$ such that, for $j=1,\ldots,p+1$, $\|\varphi_j\|_{*,q_j}\leq 1$. Assume that there exists a defining sequence $(\varphi_{j,1},\ldots,\varphi_{j,q_j})$ of $\varphi_j$ such that $\varphi_{j,l}$ is smooth for $l=1,\ldots,q_j$, $\|\varphi_{j,l}\|_{*,q_j-l} \leq 1$ for $l=1,\ldots,q_j-1$ and $\|\varphi_{j,q_j}\|_{L^1} \leq 1$. Then for every compact subset $K$ of $\Omega$, there is a constant $C>0$ depending only on $K,m$ and $p$ such that
      $$\int_K(-\varphi_{p+1})^m dd^c \varphi_1 \wedge \cdots \wedge dd^c \varphi_p \wedge \omega^{k-p} \leq C \text{ where } \omega = dd^c |z|^2.$$ 
\end{proposition}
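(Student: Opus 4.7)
My plan is to prove the proposition by a double induction on $(p,m)$: outer on $p$ with base case $p=1$ essentially given by Lemmas~\ref{m0K}--\ref{ImpK}, and inner on $m$ with base $m=0$. Throughout, fix a smooth cutoff $\chi \in C^\infty_c(\Omega)$ with $\chi \equiv 1$ on $K$ and $0\le\chi\le1$. Writing the target as
\begin{equation*}
I \;\le\; \int \chi^2 (-\varphi_{p+1})^m\, dd^c\varphi_j \wedge T_j,\qquad T_j := \bigwedge_{i\le p,\, i\ne j} dd^c\varphi_i \wedge \omega^{k-p},
\end{equation*}
the index $j\in\{1,\dots,p\}$ will be chosen strategically, namely as one with $q_j = q_p$ (the largest depth among the first $p$ functions), for reasons explained below.

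For the base case $m=0$, one application of Stokes' formula gives
\begin{equation*}
I \;\le\; \int \chi\, dd^c\varphi_j\wedge T_j \;=\; \int \varphi_j\, dd^c\chi \wedge T_j \;\lesssim\; \int_{\supp\chi}(-\varphi_j)\, T_j\wedge\omega,
\end{equation*}
which is an instance of the proposition at $(p-1,\,m=1)$, with $\varphi_j$ playing the role of the new weight function. For the inner inductive step ($m\ge 1$), Stokes on $dd^c\varphi_j$ yields two integrals, one with the factor $d\chi\wedge d^c\varphi_j$ and one with $d\varphi_{p+1}\wedge d^c\varphi_j$. Cauchy-Schwarz applied to each, together with the defining-sequence bounds $d\varphi\wedge d^c\varphi \le dd^c\varphi_{\bullet,1}$, bounds $I$ by a sum of the form $\sqrt{I_A I_B} + m\sqrt{I_C I_D}$, where each $I_\bullet$ has the same shape as $I$ but with one of the following decrements: $p\to p-1$ (from the boundary factor, after $d\chi\wedge d^c\chi \lesssim \omega$), $m\to m-1$, or some $\varphi_\bullet$ replaced by $\varphi_{\bullet,1}$ (depth dropped by $1$). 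Each smaller integral is bounded by the inductive hypothesis, possibly after iterating the same procedure along the defining sequence until reaching the terminal bound $\|\varphi_{j,q_j}\|_{L^1}\le 1$.

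The hard part is the bookkeeping of the depths after each reduction: the new sorted depths must still satisfy $q'_1\le\cdots\le q'_{p+1}$ and $q'_p\le q'_1 + 1$. The hypothesis forces $q_j \in \{q_1, q_1+1\}$ for $j\le p$, so the spread constraint is fragile---advancing a function at the lower level $q_1$ would create a new minimum $q_1-1$ while leaving some function at level $q_1+1$, yielding spread $2$ and violating the constraint. The fix is precisely the strategic choice of $j$: by advancing a function with $q_j = q_p$, its decremented depth is either $q_1$ (when $q_p = q_1+1$) or $q_1-1$ (when $q_p = q_1$), keeping the new spread at most $1$ after relabelling. The budget hypothesis $q_j \ge p-1+m$ then guarantees enough defining-sequence room for the recursion: each $m$-decrement costs at most one layer of depth, so the induction terminates at the base case $m=0$ with all depths still $\ge p-1$, which is sufficient for the outer $p$-induction to close.
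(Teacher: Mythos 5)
Your proposal is correct and matches the paper's proof in all essentials: the double induction on $(p,m)$, the reduction via Stokes plus Cauchy--Schwarz, and the crucial bookkeeping observation that one must Stokes the highest-depth curvature factor (the paper chooses $j=p$, which is precisely one with $q_j=q_p$) so that the spread constraint $q_p\le q_1+1$ and the budget $q_j\ge p-1+m$ are preserved through the transitions $(p,m)\to(p,m-1)$ (cost one layer of depth) and $(p,m)\to(p-1,m+1)$ (cost nothing). The only small slip is pointing to Lemmas~\ref{m0K}--\ref{ImpK} for the $p=1$ base case; those bound the $T_n$-weighted quantities $J_{m,p,K}$ and $I_{m,p,K}$, whereas the ingredient actually used here is Lemma~\ref{Cor of M-T DMV} (the $\int_K|\varphi|^m\omega^k$ bound), with the $p=1,\ m\ge1$ step handled by the same Stokes/Cauchy--Schwarz reduction you describe.
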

    \begin{proof} We prove the result by induction on $p$ and $m$. Fix a compact set $K$ and a cut-off function $\chi$ on $\Omega$ such that $0\leq\chi\leq1$ and $\chi=1$ on $K$. In our estimates, $\lesssim$ denotes $\leq$ modulo a multiplicative constant depending only on $m$ and $K$. 
    
     Consider the case where $p=1$. If $m=0$, then the desired property follows as $\|\varphi_1\|_{*,1} \leq 1$. Assume that the desired property is true for $0,\ldots,m-1$ for some $1\leq m\leq k-p$. We now prove this for $m$. By hypothesis, $q_2\geq q_1\geq m$. Hence, there exist smooth psh functions $\psi_1, \psi_2$ satisfying $\|\psi_1\|_{*,m-1} \leq 1,\ \|\psi_2\|_{*,m-1} \leq 1 $ such that 
     \begin{equation}\label{inequa psi1 psi2}
         d\varphi_1\wedge d^c\varphi_1\leq\ddc\psi_1,\quad d\varphi_2\wedge d^c\varphi_2\leq\ddc\psi_2.
     \end{equation}
     By Stokes' formula, we have
     \begin{align}\label{induc1}
     \int_K (-\varphi_2)^m dd^c \varphi_1 \wedge  \omega^{k-1} &\lesssim  \left |\int_\Omega \chi (-\varphi_2)^{m-1}d\varphi_2 \wedge d^c \varphi_1 \wedge \omega^{k-1}\right |\\
+ &
\left |\int_\Omega (-\varphi_2)^m d\chi \wedge d^c \varphi_1 \wedge \omega^{k-1}\right |\nonumber.
\end{align}
For the first term, by using Cauchy-Schwarz inequality and inequalities (\ref{inequa psi1 psi2}), we can
bound it from above by the square root of
$$\left (\int_\Omega\chi (-\varphi_2)^{m-1} \ddc\psi_1 \wedge \omega^{k-1}\right ) \left (\int_\Omega\chi (-\varphi_2)^{m-1} \ddc\psi_2 \wedge \omega^{k-1}\right ).$$ 
It follows from the induction hypothesis for $m-1$ that both factors are bounded by a constant depending only on $\supp(\chi)$ (and hence only on $K$). Hence, the first term of the RHS of inequality \eqref{induc1} is bounded by a constant depending only on $K$.
For the second term of the RHS of inequality \eqref{induc1}, by using Cauchy-Schwarz inequality and inequalities (\ref{inequa psi1 psi2}), we can
bound it from above by the square root of
\begin{align*}
\left (\int_\Omega (-\varphi_2)^{m+1} d\chi \wedge d^c \chi \wedge \omega^{k-1}   \right )
\left (\int_{\supp (\chi)} (-\varphi_2)^{m-1}\ddc\psi_1 \wedge \omega^{k-1}  \right ).
\end{align*}
We use the induction hypothesis for $m-1$ for the second factor. The first factor can be bounded using Corollary~\ref{Cor of M-T DMV}. Hence, the second term of the RHS of inequality \eqref{induc1} is bounded by a constant depending only on $K$. The proof for the case $p=1$ is thus complete.
     
     Now, we consider the case where $p>1$. Assume that the desired property is true for $p-1$, where $2\leq p \leq k-1$. We prove that this is true for $p$. Consider the case where $m=0$. We note that 
     $$ dd^c \varphi_1 \wedge \cdots \wedge dd^c \varphi_p = dd^c \left(\varphi_p dd^c \varphi_1\wedge \cdots \wedge dd^c\varphi_{p-1}\right).$$
     Hence, by Stokes' formula, we have
     \begin{align*}\int_K dd^c \varphi_1 \wedge \cdots \wedge dd^c \varphi_p \wedge \omega^{k-p} &\leq \int_\Omega \chi dd^c \left(\varphi_p dd^c \varphi_1\wedge \cdots \wedge dd^c\varphi_{p-1}\right) \wedge \omega^{k-p} \\
     &= \int_\Omega (-\varphi_p) dd^c\chi \wedge dd^c \varphi_1\wedge \cdots \wedge dd^c\varphi_{p-1} \wedge \omega^{k-p} \\
     &\lesssim \int_{\supp(\chi)} (-\varphi_p)dd^c \varphi_1\wedge \cdots \wedge dd^c\varphi_{p-1} \wedge \omega^{k-p+1}. \end{align*}
     We now only need to check that $\varphi_1,\ldots,\varphi_p$ satisfy the induction hypothesis for $p-1$ and $m=1$. This is clear because $q_{p-1} \leq q_p \leq q_1+1$ and $q_j \geq p-1 = (p-1)-1+1$ for $1\leq j \leq p-1$. Therefore, we get the statement for $m=0$.
    
    Assume that the statement is true for $0,\ldots,m-1$, where $1\leq m\leq k-p$. We now prove this for $m$.
    Since $q_{p+1}\geq q_p$, there exist smooth psh functions $\phi_p,\phi_{p+1}$ satisfying $\|\phi_p\|_{*,q_p-1} \leq 1, \|\phi_{p+1}\|_{*,q_p-1} \leq 1 $ such that
     \begin{equation}\label{inequa phi1 phip+1}
         d\varphi_p\wedge d^c\varphi_p\leq\ddc\phi_p,\quad d\varphi_{p+1}\wedge d^c\varphi_{p+1}\leq\ddc\phi_{p+1}.
     \end{equation}
     We need to bound, for $\varphi = \varphi_{p+1}$,
     $$\int_\Omega \chi(-\varphi)^m dd^c \varphi_1 \wedge \cdots \wedge dd^c \varphi_p \wedge \omega^{k-p}.$$
     By Stokes' formula, we have
    \begin{align}\label{induc2}
    &\int_\Omega \chi(-\varphi)^m dd^c \varphi_1 \wedge \cdots \wedge dd^c \varphi_p \wedge \omega^{k-p}\\ &\lesssim \left |\int_\Omega (-\varphi)^m d\chi \wedge d^c \varphi_p \wedge dd^c \varphi_1 \wedge \cdots\wedge dd^c \varphi_{p-1} \wedge \omega^{k-p}\right | \nonumber\\
&+\left |\int_\Omega \chi (-\varphi)^{m-1}d\varphi \wedge d^c \varphi_p \wedge dd^c \varphi_1 \wedge \cdots\wedge dd^c \varphi_{p-1} \wedge \omega^{k-p}
\right |\nonumber.
\end{align}
For the second term of the RHS of inequality (\ref{induc2}), by using Cauchy-Schwarz inequality and inequalities \eqref{inequa phi1 phip+1}, we can bound it from above by the square root of
\begin{align*}
    &\left (\int_\Omega \chi (-\varphi)^{m-1}\ddc \phi_p \wedge dd^c \varphi_1 \wedge \cdots\wedge dd^c \varphi_{p-1} \wedge \omega^{k-p}\right )\\
&\times \left (\int_\Omega \chi (-\varphi)^{m-1}\ddc\phi_{p+1} \wedge dd^c \varphi_1 \wedge \cdots\wedge dd^c \varphi_{p-1} \wedge \omega^{k-p}\right ).
\end{align*}
We note that $q_p-1, q_1,\ldots,q_{p-1},q_{p+1}$ satisfy induction hypothesis for $p$ and $m-1$. Thus, both factors are bounded by a constant depending only on $\supp(\chi)$ (and hence only on $K$). Hence, the second term of the RHS of inequality \eqref{induc2} is bounded by a constant depending only on $K$.
For the first term of the RHS of inequality \eqref{induc2}, by using Cauchy-Schwarz inequality and inequalities (\ref{inequa phi1 phip+1}), we can bound it from above by the square root of
\begin{align}\label{induc3}&\left (\int_\Omega (-\varphi)^{m+1} d\chi \wedge d^c \chi \wedge dd^c \varphi_1 \wedge \cdots\wedge dd^c \varphi_{p-1} \wedge \omega^{k-p}   \right ) \\ &\times
\left (\int_{\supp (\chi)} (-\varphi)^{m-1}\ddc\phi_p \wedge dd^c \varphi_1 \wedge \cdots\wedge dd^c \varphi_{p-1} \wedge \omega^{k-p}  \right ). \nonumber
\end{align}
It is clear that $ q_1,\ldots,q_{p-1},q_{p+1}$ satisfy induction hypothesis for $p-1$ and $m+1$ and $q_p-1, q_1,\ldots,q_{p-1},q_{p+1}$ satisfy induction hypothesis for $p$ and $m-1$. Thus, we can bound \eqref{induc3} by a constant depending only on $K$. Therefore, we can bound the first term of the RHS of inequality \eqref{induc2}. The proof is complete.
\end{proof}
  We now prove that psh $q$-complex Sobolev functions belong to $\mathcal{D}(\Omega)$ for $q\geq k-1$.

\begin{proof}[End of proof of Theorem~\ref{localMA}]
Let $\varphi \in W^*_{k-1,\text{loc}}(\Omega)\cap\psh(\Omega)$. Since the problem is local, we can assume that $\varphi \in W^*_{k-1}(\Omega)\cap\psh(\Omega)$. We will prove that $\varphi$ satisfies the condition \eqref{condlocalMA}. Let $U$ be an open relatively compact subset of $\Omega$ and $(\varphi_n)$ be the sequence of smooth psh functions constructed in the proof of Theorem~\ref{localdense}. We note that $\|\varphi_{n}\|_{W^*_{k-1}(U)}\leq 2\|\varphi\|_{W^*_{k-1}(\Omega)}$ for every $n.$
Let $(\varphi_{0,1},\ldots,\varphi_{0,k-1})$ be a smooth defining sequence for $\varphi$. By the proof of Theorem~\ref{localdense}, we can construct a smooth defining sequence $(\varphi_{n,1},\ldots,\varphi_{n,k-1})$ for $\varphi_n$ such that
 $\|\varphi_{n,l}\|_{W^*_{k-1-l}(U)}\leq 2\|\varphi_{0,l}\|_{W^*_{k-1-l}(\Omega)}$ for $1\leq l \leq k-2$, $\|\varphi_{n,k-1}\|_{L^1(U)}\leq 2\|\varphi_{0,k-1}\|_{L^1(\Omega)}$, for every $n$. Rescale $\varphi$ if necessary, we can assume that, for every $n$, $\|\varphi_n\|_{W^*_{k-1}(U)} \leq 1$, $ \|\varphi_{n,l}\|_{W^*_{k-1-l}(U)} \leq 1$ for $1\leq l\leq k-2$ and $\|\varphi_{n,k-1}\|_{L^1(U)} \leq 1$.

 Since $d\varphi_n\wedge d^c\varphi_n\leq\ddc \varphi_{n,1},$ we have
\begin{align*}
    &\int_U |\varphi_n|^{k-p-2} d\varphi_n\wedge d^c\varphi_n \wedge (dd^c \varphi_n)^{p} \wedge \omega^{k-p-1} \\
    &\leq \int_U|\varphi_n|^{k-p-2} dd^c \varphi_{n,1} \wedge (dd^c \varphi_n)^{p} \wedge \omega^{k-p-1}.
\end{align*}
 It follows from Proposition~\ref{key prop for D} that the right-hand side is uniformly bounded by a constant depending only on $U$. Therefore, $ |\varphi_n|^{k-p-2} d\varphi_n\wedge d^c\varphi_n \wedge (dd^c \varphi_n)^{p} \wedge \omega^{k-p-1}$ are locally weakly bounded in $U$ for every $p = 0,\ldots,k-2$. The proof is complete.
\end{proof}

\bibliography{biblio_family_MA,biblio_Viet_papers,bib-kahlerRicci-flow,bib_newpapers}
\bibliographystyle{alpha}

\bigskip

\end{document}